\numberwithin{equation}{section}
\numberwithin{figure}{section}
\newtheorem{theorem}{Theorem}[section]
\newtheorem{lemma}[theorem]{Lemma}
\newtheorem{proposition}[theorem]{Proposition}
\numberwithin{equation}{section}
\numberwithin{equation}{section}
\begin{document}

\title{Exponential Convergence to the Maxwell Distribution Of Solutions of Spatially Inhomogeneous Boltzmann Equations}
\author{Zhou Gang\footnote{gzhou@caltech.edu, partly supported by NSF grants DMS-1308985 and DMS-1443225}}
\maketitle
\setlength{\leftmargin}{.1in}
\setlength{\rightmargin}{.1in}
\normalsize \vskip.1in \setcounter{page}{1}
\setlength{\leftmargin}{.1in} \setlength{\rightmargin}{.1in}
\centerline{Department of Mathematics, California Institute of Technology, MC 253-37, Pasadena, CA, 91106}
\section*{Abstract}
We consider the rate of convergence of solutions of spatially inhomogeneous Boltzmann equations, with hard sphere potentials, to some equilibriums, called Maxwellians.  Maxwellians are spatially homogenous static Maxwell velocity distributions with different temperatures and mean velocities. We study solutions in weighted space $L^{1}(\mathbb{R}^{3}\times \mathbb{T}^3)$. The result is that, assume the solution is sufficiently localized and sufficiently smooth, then the solution, in $L^{1}$-space, converges to a Maxwellian, exponentially fast in time. 
\tableofcontents
\section{Formulation of the problem}
In this paper we consider the Boltzmann's equation
\begin{align}\label{eq:NLBL1}
\partial_{t} g+v\cdot\nabla_{x} g= Q( g, g)
\end{align} with initial condition
$$g(v,x,0)= g_0(v,x)\geq 0, \ \ v\in \mathbb{R}^3,\  x\in \mathbb{R}^{3}/(2\pi \mathbb{Z})^3$$ satisfying
$\int_{\mathbb{R}^3\times \mathbb{T}^3} g_{0}(v,x)\ d^3 vd^3 x=1.$
The nonlinearity $Q( g, g)$ is chosen to correspond to a hard-sphere potential:
\begin{equation}\label{eq:difColli}
Q(g,g)(v,x):=\int_{\mathbb{R}^3\times \mathbb{S}^2} |(u-v)\cdot \omega|[g(u',x)g(v',x)-g(u,x)g(v,x)]\ d^3 u\ d^2 \omega,
\end{equation}
where $u', v'\in \mathbb{R}^{3}$ are given by $u':=u-[(u-v)\cdot \omega]\omega,$ $v':=v+[(u-v)\cdot\omega]\omega$.

The equation has the following properties, for any time $t\geq 0$, provided that the solution exists,
\begin{itemize}
\item[(A)] $g(v,x,t)\geq 0$ if $g_0(v,x)\geq 0;$
\item[(B)] 
\begin{itemize}
\item[(1)]
\begin{align}
\int_{\mathbb{R}^3\times \mathbb{T}^3} g(v,x,t) \ d^3 vd^3 x=\int_{\mathbb{R}^3\times \mathbb{T}^3} g_{0}(v,x)\ d^3 vd^3 x=1;\label{eq:conser1}
\end{align}
\item[(2)] 
\begin{align}
\int_{\mathbb{R}^3\times \mathbb{T}^3} v_k g(v,x,t) \ d^3 vd^3 x=\int_{\mathbb{R}^3\times \mathbb{T}^3} v_k g_{0}(v,x)\ d^3 vd^3 x,\ k=1,2,3;\label{eq:conser2}
\end{align}
\item[(3)]
\begin{align}
\int_{\mathbb{R}^3\times \mathbb{T}^3} |v|^2 g(v,x,t) \ d^3 vd^3 x=\int_{\mathbb{R}^3\times \mathbb{T}^3} |v|^2 g_{0}(v,x)\ d^3 vd^3 x.\label{eq:conser3}
\end{align}
\end{itemize}

\item[(C)] The equation has a family of Maxwellian solutions $M_{T,\mu}$ defined as
\begin{align}
M_{T,\mu}(v):=\frac{1}{(2\pi)^3}\frac{1}{(2\pi T)^{\frac{3}{2}} }e^{-\frac{|v-\mu|^2}{2T}},
\end{align} where $T$ is the temperature, and $\mu\in\mathbb{R}^3$ is the mean velocity of the gas.
\end{itemize}

The purpose of this paper is to prove {\it{asymptotic stability of Maxwellians.}} The main objective is to prove a conjecture of C. Villani, namely the solution will converge to a Maxwellian exponentially fast in a weighted $L^1-$norm, under the assumption of the smoothness and boundness (uniform in time $t$) of the solution. Specifically let $g$ be the solution to \eqref{eq:NLBL1}, then there exists a Maxwellian $M_{T,\mu}$ and some constants $m,\ C_1,\ C_2>0$ such that for any time $t\geq 0,$
\begin{align}
 \|\langle v\rangle^{m}\Big(g(\cdot,t)-M_{T,\mu}\Big)\|_{L^1}\leq C_1 e^{-C_2 t}.\label{eq:expon10}
\end{align} 
For the complete statement, see Main Theorem \ref{THM:MainTHM} below.

In the literature, one finds many results on the asymptotic stability of Maxwellians for the Boltzmann equation. One circle of results concerns the spatially homogeneous case, where $g(v,x,t)$ is independent of the position $x$. This direction of research has been pioneered by H.Grad in \cite{MR0156656}. Further results can be found in \cite{Bodmer1973,CerIllPulv,Glassey1996,Mouhot2006}. Another circle of results concerns the Boltzmann equation on an exponentially weighted $L^2$ space, namely instead of the norm in \eqref{eq:expon10}, the adopted norm is $\|M_{T,\mu}^{-\frac{1}{2}}\cdot\|_{L^2}$;  see, e.g. \cite{UKai1974,YGuo2002,YGuo2003,MR2629879, GressStra2011, MR3226836}. The advantage of working in such spaces is that spectral theory on Hilbert space can be used. There are also results in \cite{MR2209761, MR2100057, MR2679358, Briant2016, KimLee}.

In this context, the existence of weak global solutions has been established in \cite{DiPernaLion1989}. In \cite{MR2116276}, the asymptotic stability of Maxwellians, for general initial conditions, has been studied under the assumption that global smooth solutions exist. In the spatially homogeneous case, such results appear, e.g. in \cite{Ark1988,Wenn1993, Mouhot2006, MR3562318}.

There is an earlier proof of Villani's conjecture due to Maria Gualdani, Stephane Mischler and Clement Mouhot in \cite{Mouhot2010}. In the present paper an alternative proof is presented. For a non-constructive proof, see \cite{MR900501}.

In this paper, the main difficulty is to study the properties of a certain linear operator $L$ defined in Equation \eqref{eq:difL}, below. An important step in our analysis consists in proving an appropriate decay estimate for the linear evolution given by $e^{-tL}(1-P)$, where $P$ is the Riesz projection onto the eigenspace of $L$ corresponding to the eigenvalue $0$. The difficult is that, as in \cite{FrGan2012}, the spectrum of the operator $L$ occupies the entire right half of the complex plane, except for a strip of strictly positive width around the imaginary axis that only contains the eigenvalue $0$; see Figure \ref{fig:FigureExample}, below. Rewriting $e^{-tL}(1-P)$ in terms of the resolvent, $(L-z)^{-1},$ of $L$,
\begin{equation}\label{eq:spectralTHMini}
e^{-tL}(1-P)=-\frac{1}{2\pi i}\oint_{\Gamma} e^{-tz}(L-z)^{-1}\ dz,
\end{equation} (see, e.g., \cite{RSI}), where the integration contour $\Gamma$ encircles the spectrum of $L$, except for the eigenvalue $0$, we encounter the problem of proving strong convergence of the integral on the right hand side of \eqref{eq:spectralTHMini} on $L^{1}$. This problem is solved in Section \ref{sec:propagatorEst}. 

Our paper is organized as follows. 
The main Theorem will be stated in Section \ref{sec:MainTHM}. The operator obtained by linearization around Maxwellian will be derived and studied in Section \ref{sec:Formulation}. Based on the spectrum of the linear operator, the solution will be decomposed into several components. The estimates on these components will be a reformulation of the Main Theorem. This will take place in Section \ref{sec:ProofMainTHM}, and the main theorem will be proved in the same section. In the rest of the paper, namely those from Section \ref{sec:propagatorEst}, we prove the decay estimate for the propagator.

In the present paper we use the notation $a\lesssim b$ to signify that, for some fixed constant $C,$
\begin{align}
a\leq Cb.
\end{align}

\section{Main Theorem}\label{sec:MainTHM}


We start with formulating C. Villani conjecture, see \cite{RezVillani2008, MR2116276}.

The conjecture is formulated under assumptions that $g$, the solution to Boltzmann equation \eqref{eq:NLBL1}, satisfies several conditions, including the following two:
\begin{itemize}
\item[(1)] For some sufficiently large constant $\phi>0$,
\begin{align}\sup_{t\geq 0}\|\langle v\rangle^{\phi}g(\cdot, t)\|_{L^1(\mathbb{R}^3\times \mathbb{T}^3)}\lesssim 1.\label{eq:bound1}
\end{align}
\item[(2)] For some sufficiently large natural number $L$,
\begin{align}
\sup_{t\geq 0}\sum_{|k|\leq L} \|\partial_{x}^{k}g(\cdot,t)\|_{L^2(\mathbb{R}^3\times \mathbb{T}^3)}\lesssim 1.
\end{align}
\end{itemize}

By assuming these and some more assumptions, L. Desvillettes and C. Villani proved in \cite{MR2116276} that the solution converges to a Maxwellian faster than $t^{-N}$ in space $L^1$, for any $N\geq 0.$ C. Villani conjectured the convergence rate is exponential, see \cite{RezVillani2008}.

It is worth pointing out that there are examples satisfying all the assumptions, by the results of Guo in \cite{YGuo2002, YGuo2003}.

In what follows we state the main result of the present paper, which is an affirmative answer to the conjecture. We require that the initial conditions to be sufficiently close to a Maxwellian, and this is satisfied by solution at a large time, proved by C. Villani, see \cite{RezVillani2008}.

Before stating the main result, we choose $T,\ \mu$ for initial conditions $g_0$. Recall that $g_0$ is the initial conditions for Boltzmann equation \eqref{eq:NLBL1}, and $M_{T,\mu},\ T\in \mathbb{R}^{+},\ \mu\in \mathbb{R}^{3},$ are Maxwellian solutions. 
It is not difficult to see that there exist unique $T$ and $\mu$ such that
\begin{align}\label{eq:choiceTMu}
\begin{split}
\int_{\mathbb{R}^3\times \mathbb{T}^3} v_k g_0(v,x,t) \ d^3 vd^3 x=&\int_{\mathbb{R}^3\times \mathbb{T}^3} v_k M_{T,\mu} \ d^3 vd^3 x,\ k=1,2,3,\\
\int_{\mathbb{R}^3\times \mathbb{T}^3} |v|^2 g_0(v,x,t) \ d^3 vd^3 x=&\int_{\mathbb{R}^3\times \mathbb{T}^3} |v|^2 M_{T,\mu} \ d^3 vd^3 x. 
\end{split}
\end{align}
The main result is
\begin{theorem}\label{THM:MainTHM}
Assume the solution $g$ of Boltzmann equation satisfies the estimates in [1] and [2] above, and assume that the initial conditions $g(\cdot,0)$ is sufficiently close to a Maxwellian $M_{T_0,\mu_0}$ for some $T_0,\ \mu_0$, in the sense that for some $\delta=\delta(T_0)>0,$
\begin{align}
\| g(\cdot,0)- M_{T_{0}, \mu_{0}}\|_{L^{1}(\mathbb{R}^{3}\times \mathbb{T}^{3})}\leq \delta\label{eq:ini}.
\end{align}
Then for the $T,\ \mu$ chosen in \eqref{eq:choiceTMu}, there exist constants $C_0, \ C_1>0$, such that for any time $t\geq 0$
\begin{equation}\label{eq:expon}
\| g(\cdot,t)- M_{T, \mu}\|_{L^{1}(\mathbb{R}^{3}\times \mathbb{T}^{3})}\leq C_{1} e^{-C_0 t}.
\end{equation} 
\end{theorem}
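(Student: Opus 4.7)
The plan is to linearize \eqref{eq:NLBL1} around the Maxwellian $M_{T,\mu}$ singled out by \eqref{eq:choiceTMu} and then reduce the theorem to the semigroup decay estimate for $e^{-tL}(1-P)$ promised in Section \ref{sec:propagatorEst}. Writing $g = M_{T,\mu} + h$ and using $Q(M_{T,\mu},M_{T,\mu})=0$, the perturbation $h$ satisfies
\begin{equation*}
\partial_t h + v\cdot\nabla_x h = Q(M_{T,\mu}, h) + Q(h, M_{T,\mu}) + Q(h,h) \equiv -L h + Q(h,h).
\end{equation*}
The choice \eqref{eq:choiceTMu} of $(T,\mu)$ combined with the conservation laws \eqref{eq:conser1}--\eqref{eq:conser3} forces the total mass, momentum and kinetic energy of $h(\cdot,t)$ to vanish for every $t\geq 0$. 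Since the classical hydrodynamic modes $\{1,v_1,v_2,v_3,|v|^2\}$ span the kernel of $L$, this is precisely $Ph(t)=0$, so the nonlinear evolution stays on the range of $1-P$ for all time.

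Next I would pass to the Duhamel formula
\begin{equation*}
h(t) = e^{-tL}(1-P)h(0) + \int_0^t e^{-(t-s)L}(1-P)\,Q(h(s),h(s))\,ds,
\end{equation*}
take as a black box the estimate $\|e^{-tL}(1-P)f\|_X \lesssim e^{-\sigma t}\|f\|_X$ (or a suitable two-space variant) proved in Section \ref{sec:propagatorEst}, and look to close a bootstrap in a weighted $L^1$ norm $X$. A schematic bilinear bound of the form $\|Q(h,h)\|_X \lesssim \|\langle v\rangle^\phi h\|_{L^1}\|h\|_X$, together with hypothesis (1) controlling the weighted factor uniformly in time, converts Duhamel into the integral inequality
\begin{equation*}
\|h(t)\|_X \lesssim e^{-\sigma t}\|h(0)\|_X + C\int_0^t e^{-\sigma(t-s)}\|h(s)\|_X\,ds,
\end{equation*}
which, once the $C$ factor is rendered small by exploiting the smallness of $h$, closes by a Gronwall-type argument to give $\|h(t)\|_X \lesssim e^{-C_0 t}\|h(0)\|_X$.

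The main obstacle I anticipate is reconciling the norm in which initial smallness is given (the plain $L^1$ norm of \eqref{eq:ini}) with the weighted space $X$ dictated by the bilinear estimate on $Q$. One route is interpolation: the uniform bound in hypothesis (1) gives $\|h(t)\|_{L^1_\phi}\lesssim 1$, which interpolated against $L^1$ smallness yields smallness in $X$. A second route, already foreshadowed by the introduction's discussion of \cite{MR2116276}, is to use the polynomial $L^1$ decay of Desvillettes--Villani to produce a late time $t_0$ at which $\|h(t_0)\|_X$ is small, and then restart the exponential bootstrap from $t_0$; the shift in $t_0$ is absorbed into the prefactor $C_1$ without affecting the exponential rate $C_0$.

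Since $\|h(t)\|_{L^1} \leq \|h(t)\|_X$, the bootstrap estimate immediately delivers \eqref{eq:expon}, completing the proof. The genuinely hard work therefore sits outside this argument, in the ingredients I would treat as black boxes here: identifying $\ker L$ with the hydrodynamic span (and the associated Riesz projection $P$), and proving that $e^{-tL}(1-P)$ decays exponentially on the relevant weighted $L^1$ space in spite of the essential spectrum of $L$ reaching the imaginary axis. The latter—encapsulated in the contour representation \eqref{eq:spectralTHMini}—is the central analytical obstacle of the paper and is the business of Section \ref{sec:propagatorEst}.
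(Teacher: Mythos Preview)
Your overall architecture---linearize around $M_{T,\mu}$, observe $Ph=0$ via the conservation laws, Duhamel, and feed in the semigroup decay from Section~\ref{sec:propagatorEst}---is exactly the paper's strategy. The gap is in the step you call ``a schematic bilinear bound of the form $\|Q(h,h)\|_X \lesssim \|\langle v\rangle^\phi h\|_{L^1}\|h\|_X$.'' This estimate is \emph{false} in $L^1(\mathbb{R}^3\times\mathbb{T}^3)$: since $Q$ is local in $x$, one has
\[
\|\langle v\rangle^{m}Q(h,h)\|_{L^1_{v,x}}\lesssim \int_{\mathbb{T}^3}\|\langle v\rangle^{m+1}h(\cdot,x)\|_{L^1_v}\,\|h(\cdot,x)\|_{L^1_v}\,dx,
\]
and there is no way to dominate this by a product of two $L^1_{v,x}$ norms without placing one factor in $L^\infty_x$. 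Hypothesis~(1) alone gives no such control; this is precisely where hypothesis~(2) (the uniform $H^L_x$ bound) must enter, and your proposal never invokes it. Relatedly, the propagator estimate of Theorem~\ref{THM:propagator} loses $\Pi$ powers of $\langle v\rangle$, so you must bound $\|\langle v\rangle^{m+\Pi}Q(h,h)\|_{L^1}$ in terms of $\|\langle v\rangle^m h\|_{L^1}$, a further mismatch your ``rendering $C$ small'' sentence does not resolve: bounding the heavy factor by hypothesis~(1) fixes $C$ at order one, and Gronwall then yields at best $e^{(C-\sigma)t}$.

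The paper closes this gap in Subsection~\ref{sec:embedding} by proving the genuinely nonlinear estimate
\[
\|\langle v\rangle^{m+\Pi}Q(f,f)\|_{L^1}\lesssim \|\langle v\rangle^{m}f\|_{L^1}^{5/4},
\]
using \emph{both} hypotheses: the $L^\infty_x L^1_v$ control needed for the product is obtained from hypothesis~(2) via Sobolev embedding in $x$ (see \eqref{eq:claim2}--\eqref{eq:diffN}), and the weight deficit is repaired by interpolating against the high-moment bound of hypothesis~(1) (see \eqref{eq:claim3}--\eqref{eqn:epsilonFix}). The resulting inequality $\mathcal{M}(t)\lesssim \|\langle v\rangle^{m+\Pi}f_0\|_{L^1}+\mathcal{M}(t)^{5/4}$ is superlinear, and closes by continuity once $\mathcal{M}(0)$ is small---this, not a linear Gronwall, is the correct endgame. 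Your interpolation remark and the Desvillettes--Villani restart are both on target for handling the initial smallness, but the nonlinear estimate itself needs the missing ingredient.
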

This theorem will be proven in Section \ref{sec:ProofMainTHM}.

\section{The linearization around the Maxwellian}\label{sec:Formulation}
We start with linearizing around the Maxwellian solutions to obtain a linear operator.

Recall that $M_{T,\mu}$ are solutions to the equation
\begin{align}
-v\cdot\nabla_x g+Q(g,g)=0.
\end{align} We plug $g=M_{T,\mu}+f$ into the nonlinear operator $-v\cdot \nabla_x g+Q(g,g)$ to find
\begin{align}
-v\cdot\nabla_x g+Q(g,g)=-L_{T,\mu} f+ Q(f,f).
\end{align} Here the linear operator $L$ is defined by
\begin{equation}\label{eq:difL}
L_{T,\mu}:=v\cdot\nabla_{x}+ \nu_{T,\mu}(v)+K_{T,\mu}.
\end{equation} 
where $\nu_{T,\mu}$ is the multiplication operator defined by 
\begin{align}
\nu_{T,\mu}(v):=\int_{\mathbb{R}^3\times \mathbb{S}^2}|(u-v)\cdot\omega| M_{T,\mu}(u)\ d^3u d^2 \omega,\label{eq:difNu}
\end{align} and $K_{T,\mu}$ is an integral operator, defined by, for any function $f$,
\begin{align}
K_{T,\mu}(f)
:=& M_{T,\mu}(v) \int_{\mathbb{R}^{3}\times \mathbb{S}^2} |(u-v)\cdot\omega|f(u)\ d^3 ud^2 \omega\nonumber\\
& - \int_{\mathbb{R}^3\times\mathbb{S}^2 }|(u-v)\cdot\omega |M_{T,\mu}(u') f(v')\ d^3 ud^2 \omega\label{eq:difK2}\\
& -\int_{\mathbb{R}^3 \times \mathbb{S}^2} |(u-v)\cdot\omega| M_{T,\mu}(v') f(u')\ d^3 ud^2 \omega\nonumber\\
=: & K_{1}-K_2-K_3\nonumber
\end{align} where the operators $K_{l},\ l=1,2,3,$ are naturally defined.

Next we study the eigenvectors and eigenvalues of the operator $L_{T,\mu}$. By the fact that
\begin{align}
-v\cdot \nabla_x cM_{T,\mu}+Q(cM_{T,\mu},cM_{T,\mu})=0.
\end{align} for any $c\in \mathbb{R}$, $T>0$, $\mu \in \mathbb{R}^3,$ we obtain, after taking $c$, $T$ and $\mu$ derivatives on the equation above, that $L_{T,\mu}$ has five eigenvectors with eigenvalues zero
\begin{align}
M_{T,\mu},\ \partial_{T} M_{T,\mu},\ \partial_{\mu_k}M_{T,\mu},\ k=1,2,3.\label{eq:eigenvectors}
\end{align} 

A key fact is that these are the only eigenvectors for $L_{T,\mu}$ with eigenvalue $0$ in certain weighted $L^2$ space, see \cite{CerIllPulv, YGuo2002, YGuo2003, MR2301289}.

Define its Riesz projection, onto the eigenvector space, by $P^{T, \mu}$. It takes the form, for any function $h,$
\begin{align}
P^{T,\mu} h:= &\frac{1}{8\pi^{\frac{7}{2}}T^{\frac{2}{3}}} e^{-\frac{|v-\mu|^2}{2T}} \int_{\mathbb{R}^{3}\times \mathbb{T}^{3}} h(u,x)\ d^3 u d^3 x\label{eq:difProjection}\\
+&\frac{1}{\int_{\mathbb{R}^{3}}u_1^2 e^{-\frac{|u|^2}{2T}}\ d^3 u}\frac{1}{8\pi^{\frac{7}{2}}}\sum_{k=1}^{3} e^{-\frac{|v-\mu|^2}{2T}} (v_k-\mu_k)\int_{\mathbb{R}^{3}\times \mathbb{T}^{3}}(u_k-\mu_k) h(u,x)\ d^3 u d^3 x\nonumber\\
+&\frac{1}{\int_{\mathbb{R}^{3}}(|u|^2-3T)^2 e^{-\frac{|u|^2}{2T}}\ d^3 u}\frac{1}{8\pi^{\frac{7}{2}}} e^{-\frac{|v-\mu|^2}{2T}} (|v-\mu|^2-3T)\int_{\mathbb{R}^{3}\times \mathbb{T}^{3}}(|u-\mu|^2-3T) h(u,x)\ d^3 u d^3 x.\nonumber
\end{align} 

To prepare for our analysis, we state some estimates on the nonlinearity $Q$ and the operators $\nu_{T,\mu},$ $K_{T,\mu}$. 
Define a constant $\Lambda_{T}$ as
\begin{align}
\Lambda_{T}:=\inf_{v} \nu_{T,\mu}(v).\label{eq:LambdaT}
\end{align}
The results are:
\begin{lemma}\label{LM:EstNonline}
The constant $\Lambda_{T}$ is positive, i.e.
\begin{align}
\Lambda_{T}>0.\label{eq:lowB}
\end{align}
There exists a positive constant $C_{T}$ such that $\nu_{T,\mu}$ has a lower bound,
\begin{equation}\label{eq:globalLower}
\nu_{T,\mu}(v)\geq C_{T}(1+|v-\mu|).
\end{equation}
For any $m\geq 0,$ there exists a positive constant $\Upsilon_{m,T}$ such that, for any functions $f,\  g
\in L^{1}(\mathbb{R}^{3}),$
\begin{equation}\label{eq:mK1}
\sum_{l=1}^{3}\|\langle v-\mu\rangle^{m}K_{l} f\|_{L^{1}(\mathbb{R}^{3})}\leq \Upsilon_{m, T} \|\langle v-\mu\rangle^{m+1}f\|_{L^{1}(\mathbb{R}^{3})},
\end{equation}
and
\begin{align}
\|\langle v\rangle^{m} Q(f,g)\|_{L^{1}(\mathbb{R}^{3})}
\leq C_{m} \Big[\|f\|_{L^{1}(\mathbb{R}^{3})}\|\langle v\rangle^{m+1}g\|_{L^{1}(\mathbb{R}^{3})}+\|\langle v\rangle^{m+1}f\|_{L^{1}(\mathbb{R}^{3})}
\|g\|_{L^{1}(\mathbb{R}^{3})}\Big].\label{eq:estNonL}
\end{align}
\end{lemma}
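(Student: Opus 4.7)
My plan is to handle the four claims in turn, all rooted in explicit integration over $\omega \in \mathbb{S}^2$ and the classical pre-/post-collision change of variables.

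For \eqref{eq:lowB} and \eqref{eq:globalLower} I would first integrate over $\omega$: since $\int_{\mathbb{S}^2} |(u-v)\cdot\omega|\, d^2\omega = c_0 |u-v|$ for a universal constant $c_0$, the problem reduces to
$$ \nu_{T,\mu}(v) = c_0 \int_{\mathbb{R}^3} |u-v|\, M_{T,\mu}(u)\, d^3u, $$
which is a continuous function of $v-\mu$. The two-sided inequality $|v-\mu|-|w| \leq |w-(v-\mu)| \leq |v-\mu|+|w|$ combined with the Gaussian weight on $w = u-\mu$ then shows that $\nu_{T,\mu}$ is continuous, strictly positive everywhere (the integrand is positive), and grows linearly in $|v-\mu|$ at infinity; these three facts together yield both $\Lambda_T > 0$ and \eqref{eq:globalLower}.

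For \eqref{eq:mK1} I would decompose $K_{T,\mu} = K_1 - K_2 - K_3$. The $K_1$ term is immediate because the factor $M_{T,\mu}(v)$ out front absorbs every polynomial weight in $v$, while $|u-v| \leq \langle u-\mu\rangle + \langle v-\mu\rangle$ supplies only one extra power in $u$. For $K_2$ and $K_3$ I would apply the classical pre-/post-collision change $(u,v,\omega) \mapsto (u',v',\omega)$, which is an involution with Jacobian $1$ and preserves $|(u-v)\cdot\omega|$; after this substitution the operators acquire explicit Grad-type kernels $k_j(v,w)$ built from $M_{T,\mu}$ evaluated at collisionally-shifted arguments. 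The Gaussian decay of $M_{T,\mu}$ combined with the energy identity $|u|^2+|v|^2 = |u'|^2+|v'|^2$ then yields a Schur-type bound $\int \langle v-\mu\rangle^m k_j(v,w)\, dv \leq \Upsilon_{m,T}\, \langle w-\mu\rangle^{m+1}$, after which Fubini completes the estimate.

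For \eqref{eq:estNonL}, the loss part is direct from $\int_{\mathbb{S}^2}|(u-v)\cdot\omega|\, d^2\omega \leq c_0(\langle u\rangle+\langle v\rangle)$, producing the two terms on the right-hand side. For the gain part I would use the energy identity to write $\langle v\rangle^m \leq C_m(\langle u'\rangle^m + \langle v'\rangle^m)$ and substitute $(u,v) \to (u',v')$ with $\omega$ fixed (Jacobian $1$), reducing the gain integral to a loss-type integral with the weights shifted onto the relabeled variables. The hard step will be \eqref{eq:mK1} for $K_2$ and $K_3$: producing the one-power-loss Schur bound requires exploiting the Maxwellian's super-polynomial decay to absorb the arbitrary polynomial powers generated by the collisional substitution, and one must track carefully how $u'$ and $v'$ depend on $(u,v,\omega)$ when integrating out $\omega$ so that only a single power of $\langle w-\mu\rangle$ remains on the right.
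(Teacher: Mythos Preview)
Your proposal is correct, and for \eqref{eq:lowB}, \eqref{eq:globalLower}, and \eqref{eq:estNonL} it coincides with the paper's argument: integrate $\omega$ out to reduce $\nu_{T,\mu}$ to a convolution of $|\cdot|$ against the Maxwellian, and for the bilinear estimate use $|v|^2\le |u'|^2+|v'|^2$ to get $\langle v\rangle^{m}\le c(m)(\langle u'\rangle^{m}+\langle v'\rangle^{m})$, the bound $|(u-v)\cdot\omega|\le |u'|+|v'|$, and the Jacobian-$1$ change of variables $(u,v)\mapsto(u',v')$ for fixed $\omega$.

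The one genuine difference is your treatment of \eqref{eq:mK1}. You propose to pass to the explicit Grad kernel for $K_2$, $K_3$ and prove a Schur-type column bound $\int\langle v-\mu\rangle^{m}k_j(v,w)\,dv\le \Upsilon_{m,T}\langle w-\mu\rangle^{m+1}$, and you flag this as the hard step. The paper avoids this computation entirely: once \eqref{eq:estNonL} is established, it simply observes that each of $K_1$, $K_2$, $K_3$ is (up to sign) a gain or loss piece of the bilinear $Q$ with one argument frozen at $M_{T,\mu}$, so \eqref{eq:mK1} drops out of \eqref{eq:estNonL} by substituting $M_{T,\mu}$ for one slot and using that $\|\langle v-\mu\rangle^{k}M_{T,\mu}\|_{L^1}<\infty$ for every $k$. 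Your kernel route would certainly work and yields more explicit information about $K$, but it is substantially more labor; if you reorder and prove \eqref{eq:estNonL} first, \eqref{eq:mK1} becomes a one-line corollary rather than the hard step.
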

This lemma is proven in Appendix \ref{sec:nonlinear}.

\section{Proof of Main Theorem \ref{THM:MainTHM}}\label{sec:ProofMainTHM}

To facilitate later analysis we reformulate equation \eqref{eq:NLBL1} into a more convenient form. 

For the $T,\ \mu$ chosen in \eqref{eq:choiceTMu}, we define a function $f:\ \mathbb{R}^{3}\times\mathbb{T}^{3}\times \mathbb{R}^{+}\rightarrow \mathbb{R}$ by
\begin{equation}\label{eq:difF}
f(v,x,t):= g(v,x,t)-M_{T,\ \mu}(v).
\end{equation} 
By the conservation laws in \eqref{eq:conser1}-\eqref{eq:conser3}, we have, since $f=g-M_{T,\mu}$,
\begin{align}
\int_{\mathbb{R}^3\times \mathbb{T}^3} v_k f(v,x,t) \ dv^3 dx^3=0,\ k=1,2,3,  
\end{align}
\begin{align}
\int_{\mathbb{R}^3\times \mathbb{T}^3} |v|^2f(v,x,t) \ dv^3 dx^3=0,
\end{align}
and by the fact $\int_{\mathbb{R}^3\times \mathbb{T}^3} g(v,x,t) \ dv^3dx^3=\int_{\mathbb{R}^3\times \mathbb{T}^3} M_{T,\mu}(v) \ dv^3dx^3=1,$
\begin{align}
\int_{\mathbb{R}^3\times \mathbb{T}^3} f(v,x,t) \ dv^3dx^3=0.
\end{align} 

These orthogonality conditions on $f$ and the definition of $P^{T, \mu}$ in \eqref{eq:difProjection} imply
\begin{align}
P^{T, \mu} f(\cdot,t)=0.\label{eq:ortho11}
\end{align}

In what follows we derive effective governing equation for $f$.
Plug the decomposition of $g$ in \eqref{eq:difF} into Boltzmann equation \eqref{eq:NLBL1} to derive
\begin{equation}\label{eq:ALinear}
 \partial_{t}f=-L_{T,\mu}f+ Q(f,f).
\end{equation}
Here the linear operator $L_{T,\mu}$ is defined in \eqref{eq:difL}, and the nonlinear term $Q(f,f)$ is defined in \eqref{eq:difColli}.

To cast the equation for $\partial_{t}f$ into a convenient form, we apply the operator $1-P^{T,\mu}$ on both sides of \eqref{eq:ALinear}, and use that $P^{T,\mu}f=0$, and that $P^{T,\mu}$ commutes with $L_{T,\mu},$ to obtain an effective equation for $f,$
\begin{align}
\partial_{t}f=-L_{T,\mu}f+(1-P^{T,\mu}) Q(f,f).\label{eq:effective10}
\end{align}
Apply Duhamel's principle on \eqref{eq:effective10} to obtain
\begin{align}
f=e^{-tL_{T,  \mu}}f_0+\int_{0}^{t}e^{-(t-s)L_{T,\mu}}(1-P^{T,\mu})Q(f,f)(s)\ ds.\label{eq:duha}
\end{align}

The proof that $f$ decays exponentially fast in weighted $L^1$ norm, relies critically on the decay estimates of the propagator $e^{-tL_{T,\mu}}(1-P^{T,\mu})$ acting on $L^{1}$. The result is
\begin{theorem}\label{THM:propagator}
If $m>0$ is sufficiently large, then there exist constants $C_0,\ C_1,\ \Pi>0$, such that for any function $h$, we have
\begin{align}
\|\langle v-\mu\rangle^{m}e^{-tL_{T,\mu}}(1-P^{T,\mu})h\|_{L^{1}(\mathbb{R}^{3}\times \mathbb{T}^{3})}\leq C_{1}e^{-C_0 t} \|\langle v-\mu\rangle^{m+\Pi}h\|_{L^{1}(\mathbb{R}^{3}\times \mathbb{T}^{3})}.\label{eq:propa}
\end{align}
\end{theorem}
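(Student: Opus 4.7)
\textbf{Plan of proof for Theorem \ref{THM:propagator}.}
The plan is to exploit the spectral representation \eqref{eq:spectralTHMini} by shifting the contour $\Gamma$ into the strip $\{\mathrm{Re}\, z = -C_{0}\}$ and estimating the resolvent $(L_{T,\mu}-z)^{-1}$ in weighted $L^{1}$ along this shifted contour. I split
$$
L_{T,\mu} = A + K_{T,\mu}, \qquad A := v\cdot\nabla_{x} + \nu_{T,\mu}(v),
$$
so that $A$ is pure transport-plus-absorption with an explicit resolvent and $K_{T,\mu}$ is the weight-losing integral operator controlled by \eqref{eq:mK1}.

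Since $x\in \mathbb{T}^{3}$, I first decompose in Fourier series, writing $h(v,x)=\sum_{k\in\mathbb{Z}^{3}}\hat h_{k}(v)e^{ik\cdot x}$, so that $L_{T,\mu}$ acts mode-by-mode as $L_{k} := ik\cdot v + \nu_{T,\mu}(v)+K_{T,\mu}$. On each mode, $(A_{k}-z)^{-1}$ is multiplication by $(\nu_{T,\mu}(v)+ik\cdot v-z)^{-1}$, whose modulus is bounded by $(\nu_{T,\mu}(v)-\mathrm{Re}\, z)^{-1}$. By \eqref{eq:lowB}--\eqref{eq:globalLower}, this is uniformly small as $\mathrm{Re}\, z < \Lambda_{T}$, and in particular $(A-z)^{-1}$ is bounded in weighted $L^{1}$ for any fixed $\mathrm{Re}\, z\in(-\Lambda_{T},\Lambda_{T})$.

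Next I expand the resolvent as
$$
(L_{T,\mu}-z)^{-1} = \sum_{n=0}^{N-1}(A-z)^{-1}\bigl[-K_{T,\mu}(A-z)^{-1}\bigr]^{n} + \bigl[-(A-z)^{-1}K_{T,\mu}\bigr]^{N}(L_{T,\mu}-z)^{-1}
$$
for a sufficiently large $N$. Iterating \eqref{eq:mK1}, each factor $K_{T,\mu}(A-z)^{-1}$ costs one power of $\langle v-\mu\rangle$ but, combined with the growth $\nu_{T,\mu}(v)\gtrsim |v-\mu|$, produces a factor that decays in $|\mathrm{Im}\, z|$ and $|k|$. After $N$ iterations one gains an $|z|^{-N}$ decay along the contour at the cost of $\Pi\sim N+1$ extra weight powers, justifying the absolute convergence of \eqref{eq:spectralTHMini} in the weighted $L^{1}$ norm. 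To close the expansion, the remainder term is controlled by first establishing boundedness of $(L_{T,\mu}-z)^{-1}$ from the heavier weighted $L^{1}$ into itself; for this I combine the Neumann expansion with a Fredholm/compactness argument for $K_{T,\mu}(A-z)^{-1}$ on each Fourier mode to ensure that there are no spurious eigenvalues of $L_{T,\mu}$ in $\{-C_{0}<\mathrm{Re}\, z\leq 0\}$ other than $z=0$. The eigenspace at $z=0$ is already identified in \eqref{eq:eigenvectors} and is killed by $(1-P^{T,\mu})$; thus the shifted contour sees no other poles and we gain the factor $e^{-C_{0}t}$.

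The principal obstacle is the $L^{1}$ (as opposed to $L^{2}$) resolvent bound. In a Hilbert space setting one has $\|(A-z)^{-1}\|\lesssim |\mathrm{Im}\, z|^{-1}$ by spectral calculus, but in $L^{1}$ the bound must be extracted pointwise from the multiplier $(\nu_{T,\mu}+ik\cdot v-z)^{-1}$ and paid for in weights via \eqref{eq:mK1}. The delicate step is balancing the number of iterations $N$ (which controls the $|z|^{-N}$ decay and hence the convergence of the contour integral, summed over $k\in \mathbb{Z}^{3}$) against the weight loss $\Pi = \Pi(N)$ on the right-hand side of \eqref{eq:propa}, while simultaneously ruling out accumulation of spectrum of the full $L_{T,\mu}$ near the imaginary axis as $|k|\to\infty$.
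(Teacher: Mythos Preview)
Your high-level architecture --- Fourier decompose in $x$, split $L_{T,\mu}=A+K_{T,\mu}$ with $A=v\cdot\nabla_{x}+\nu_{T,\mu}$, iterate the resolvent identity finitely many times trading weight for decay, then close the remainder with a Fredholm argument --- matches the paper's. But two of the mechanisms you invoke do not work as stated, and the paper replaces them with different tools.

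First, on a vertical contour $\{\mathrm{Re}\,z=C_{0}\}$ (your sign is reversed; you need $C_{0}\in(0,\Lambda_{T})$, not $-C_{0}$), the assertion that each factor $K_{T,\mu}(A_{k}-z)^{-1}$ yields a gain of order $|z|^{-1}$ fails for $k\neq 0$: the multiplier $(\nu(v)+ik\cdot v-z)^{-1}$ stays $O(1)$ on the slab $\{v:k\cdot v\approx\mathrm{Im}\,z\}$, and composing with $K$ gives at best uniform boundedness, not the $|z|^{-N}$ you need for absolute convergence of the vertical contour integral. The paper avoids this by using an $n$-dependent \emph{tilted} contour $\Gamma_{n}$ (equation~\eqref{eq:difCurve}) whose arms satisfy $\mathrm{Re}\,\zeta\to+\infty$; integrability then comes from $e^{-t\,\mathrm{Re}\,\zeta}$, and only a \emph{uniform} resolvent bound along $\Gamma_{n}$ is required (Lemma~\ref{LM:distanceContour}).

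Second, summability over Fourier modes is not achieved at the resolvent level. The paper's bound on $e^{-tL_{n}}$ actually \emph{grows} like $(1+|n|)$ (Lemma~\ref{LM:roughEst}), from the length of the vertical segment of $\Gamma_{n}$. The rescue happens in the time domain: the paper first Duhamel-expands $e^{-tL}(1-P)$ into explicit terms $A_{0},\ldots,A_{12}$ plus a remainder $\tilde{A}$ (equation~\eqref{eq:durha}); inside $\tilde{A}_{n}$ one has several factors $K\,e^{-s(\nu+in\cdot v)}\,K$, each of which gains $(1+|n|s)^{-1}$ by integration by parts against the phase $e^{-isn\cdot v}$ (Lemma~\ref{LM:oscillate}). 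After the time integrations these produce $(1+|n|)^{-5}$, beating the $(1+|n|)$ loss and yielding a summable series. Your plan has no analogue of this oscillation lemma, and without it the sum over $k$ does not close. Your Fredholm/compactness step for the remainder and your worry about uniform spectral gap as $|k|\to\infty$ are both correct concerns --- the paper handles them in Propositions~\ref{prop:compact} and~\ref{prop:largeN} --- but they are downstream of the two issues above.
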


The theorem will be proved in Section \ref{sec:propagatorEst}.

We continue to study the equation \eqref{eq:duha}.
Apply the propagator estimate in Theorem \ref{THM:propagator} and use that $(1-P^{T,\mu})f_0=f_0$ to find, 
\begin{align}
\|\langle v-\mu\rangle^{m}f(\cdot,t)\|_{L^{1}}\lesssim  e^{-C_{0}t}\|\langle v-\mu\rangle^{m+\Pi}f_0\|_{L^{1}}+\int_{0}^{t} e^{-C_{0}(t-s)}
\|\langle v-\mu\rangle^{m+\Pi}Q(f,f)(s)\|_{L^{1}}\ ds. \label{eq:estPre}
\end{align}

Now we estimate the terms on the right hand side.

For the second term we will prove in Subsection \ref{sec:embedding} below, together with the assumptions on the solution in Theorem \ref{THM:MainTHM}, that
\begin{align}
\|\langle v-\mu\rangle^{m+\Pi}Q(f,f)(s)\|_{L^{1}}\lesssim
 \|\langle v-\mu\rangle^m f(s)\|^{\frac{5}{4}}_{L^{1}}\leq e^{-\frac{5}{4}C_0 s} \mathcal{M}^{\frac{5}{4}}(t)\label{eq:durNon}
\end{align}
where $\mathcal{M}$ is a controlling function defined as
\begin{align}
\mathcal{M}(t):=\displaystyle\max_{0\leq s\leq t} e^{C_{0}s}  \|\langle v-\mu\rangle^{m}f(s)\|_{L^{1}(\mathbb{R}^3\times \mathbb{T}^3)}.\label{eq:difCont}
\end{align} 
It is not hard to see that $\|\langle v-\mu\rangle^{m}f(t)\|_{L^{1}(\mathbb{R}^3\times \mathbb{T}^3)}$ is continuous in $t$ by the identity in \eqref{eq:duha} and the assumptions on the solution. Thus
the function $\mathcal{M}$ is also continuous.

Suppose \eqref{eq:durNon} holds, then by \eqref{eq:estPre} 
\begin{align}
\|\langle v-\mu\rangle^{m}f(\cdot,t)\|_{L^{1}}  \lesssim e^{-C_0 t}\Big[\|\langle v-\mu\rangle^{m+\Pi}f_0\|_{L^{1}}+\mathcal{M}^{\frac{5}{4}}(t)\Big].
\end{align}
Observe that $\mathcal{M}$ is an increasing function by definition, hence
\begin{align}
\mathcal{M}(t)\lesssim \|\langle v-\mu\rangle^{m+\Pi}f_0\|_{L^{1}}+\mathcal{M}^{\frac{5}{4}}(t).\label{eq:gronwall}
\end{align}

Now we are ready to prove the main Theorem \ref{THM:MainTHM}.

The choice of initial conditions and the assumption $\|\langle v-\mu\rangle^{2m+2\Pi} f_0\|_{L^1}^{\frac{1}{2}}\lesssim 1$ in Theorem \ref{THM:MainTHM} imply, $$\mathcal{M}(0)\ll 1\ \text{and}\ \|\langle v-\mu\rangle^{m+\Pi}f_0\|_{L^{1}(\mathbb{R}^3\times \mathbb{T}^3)}\leq \|f_0\|_{L^1}^{\frac{1}{2}} \|\langle v-\mu\rangle^{2m+2\Pi} f_0\|_{L^1}^{\frac{1}{2}} \ll 1,$$
where we used the condition $\|f_0\|_{L^1}^{\frac{1}{2}}\ll 1$ in \eqref{eq:ini}.
This together with \eqref{eq:gronwall}, and that $\mathcal{M}$ is a continuous function, implies that there exists a constant $C$ such that for any $t\in [0,\infty),$
\begin{align}
\mathcal{M}(t)\leq 2C\|\langle v-\mu\rangle^{m+\Pi}f_0\|_{L^{1}(\mathbb{R}^3\times \mathbb{T}^3)}. \label{eq:controlM2}
\end{align}  

This, together with the definition of $\mathcal{M}$ in \eqref{eq:difCont}, proves Theorem \ref{THM:MainTHM}.
\begin{flushright}
$\square$
\end{flushright}



\subsection{Proof of \eqref{eq:durNon}}\label{sec:embedding}

Here we only prove \eqref{eq:durNon} for $\mu=0.$ When $\mu\not=0$, the desired estimate follows by a translation.

We claim the following two estimates
\begin{align}
\|\langle v\rangle^{m+\Pi}Q(f,f)\|_{L^{1}}\lesssim
\displaystyle\sum_{|\beta|\leq 4}\|\langle v\rangle^{m+\Pi+1}\nabla_x^{\beta}f\|_{L^{1}} \|f\|_{L^{1}},\label{eq:claim2}
\end{align}
and for $|\beta|\leq 4,$
\begin{align}
\|\langle v\rangle^{m+\Pi+1}\nabla_x^{\beta}f\|_{L^{1}}\lesssim \|\langle v\rangle^{m}f\|_{L^{1}}^{\frac{1}{4}} \Big[ 1 +\|\langle v\rangle^{3m+4\Pi+24}f\|_{L^{1}}+\|(-\Delta_{x}+1)^{20}\ f\|_{L^{2}}\Big].\label{eq:claim3}
\end{align}

Suppose that \eqref{eq:claim2} and \eqref{eq:claim3} hold, then we apply the assumptions (1) and (2) in Main Theorem \ref{THM:MainTHM} to obtain, 
\begin{align}\label{eq:assum2}
\begin{split}
\|(-\Delta_{x}+1)^{20}\ f\|_{L^{2}}\leq& \|(-\Delta_{x}+1)^{20}\ g\|_{L^{2}}+\|(-\Delta_{x}+1)^{20}\ M_{T,\mu}\|_{L^{2}}\lesssim 1,\\
\|\langle v\rangle^{3m+4\Pi+24}f\|_{L^{1}}\leq& \|\langle v\rangle^{3m+4\Pi+24}g\|_{L^{1}}+\|\langle v\rangle^{3m+4\Pi+24}M_{T,\mu}\|_{L^{1}}\lesssim 1,
\end{split}
\end{align} where, recall that $g=M_{T,\mu}+f$ by \eqref{eq:difF}.

Plug \eqref{eq:assum2} into \eqref{eq:claim3} to find
\begin{align}
\sum_{|\beta|\leq 4}\|\langle v\rangle^{m+\Pi+1}\nabla_x^{\beta}f\|_{L^{1}}\lesssim \|\langle v\rangle^{m}f\|_{L^{1}}^{\frac{1}{4}}.
\end{align}
This together with \eqref{eq:claim2} implies the desired estimate for $\|\langle v\rangle^{m+\Pi}Q(f,f)\|_{L^{1}}$, or  \eqref{eq:durNon}.

To complete the proof we need to prove \eqref{eq:claim2} and \eqref{eq:claim3} .

We start with proving \eqref{eq:claim2}.
Use \eqref{eq:estNonL} to find,
\begin{align*}
\|\langle v\rangle^{m+\Pi}Q(f,f)\|_{L^{1}(\mathbb{R}^3\times \mathbb{T}^3)}\lesssim &\big\| \|\langle v\rangle^{m+\Pi+1}f\|_{L^{1}(\mathbb{R}^3)}\|f\|_{L^{1}(\mathbb{R}^3)}  \big\|_{L^1(\mathbb{T}^3)}\\
\leq& \|f\|_{L^{1}(\mathbb{R}^3\times \mathbb{T}^3)}\ \max_{x\in \mathbb{T}^3} \|\langle v\rangle^{m+\Pi+1}f(\cdot,x)\|_{L^{1}(\mathbb{R}^3)}.
\end{align*}
The key observation is that the second factor satisfies the estimate
\begin{align}
\max_{x\in \mathbb{T}^3} \|\langle v\rangle^{m+\Pi+1}f(\cdot,x)\|_{L^{1}(\mathbb{R}^3)}\lesssim \sum_{|\beta|\leq 4}\|\langle v\rangle^{m+\Pi+1}\nabla_x^{\beta}f\|_{L^{1}}.\label{eq:claimmPi1}
\end{align} This together with the estimates above implies the desired \eqref{eq:claim2}.

To see \eqref{eq:claimmPi1}, we Fourier-expand $f$ into the form
\begin{align*}
f(v,x)=\sum_{\bf{n}\in \mathbb{Z}^3} e^{i\bf{n}\cdot x}f_{\bf{n}}(v)
\end{align*} and compute directly to have
\begin{align}
\max_{x\in \mathbb{T}^3} \|\langle v\rangle^{m+\Pi+1}f(\cdot,x)\|_{L^{1}(\mathbb{R}^3)}\leq &\sum_{\bf{n}\in \mathbb{Z}^3} \|\langle v\rangle^{m+\Pi+1}f_{\bf{n}}\|_{L^{1}(\mathbb{R}^3)}\nonumber\\
= &\sum_{{\bf{n}}\in \mathbb{Z}^3}\frac{1}{(|{\bf{n}}|^2+1)^2} (1+|{\bf{n}}|^{2})^2\|\langle v\rangle^{m+\Pi+1}f_{\bf{n}}\|_{L^{1}(\mathbb{R}^3)}.\label{eq:diffN}
\end{align}
Observe that
\begin{align*}
(|{\bf{n}}|^{2}+1)^2\|\langle v\rangle^{m+\Pi+1}f_{\bf{n}}\|_{L^{1}(\mathbb{R}^3)}=&\frac{1}{(2\pi)^3}\Big\|\Big\langle e^{i\bf{n}\cdot x}, (-\Delta_{x}+1)^2 \langle v\rangle^{m+\Pi+1}f\Big\rangle_{\mathbb{T}^3}\Big\|_{L^{1}(\mathbb{R}^3)}\\
\leq & \frac{1}{(2\pi)^3}\|(-\Delta_{x}
+1)^2 \langle v\rangle^{m+\Pi+1}f\|_{L^{1}(\mathbb{R}^3\times \mathbb{T}^3)}.
\end{align*}
Put this back into \eqref{eq:diffN} and use the fact that $\displaystyle\sum_{{\bf{n}}\in \mathbb{Z}^3}\frac{1}{(|{\bf{n}}|^2+1)^2}<\infty$ to obtain the desired \eqref{eq:claimmPi1}.

Next we prove \eqref{eq:claim3}, which is to control $\|\langle v\rangle^{m}\nabla_x^{\beta}f\|_{L^{1}},\ |\beta|\leq 4.$
The key step is to prove, for any constant $\epsilon>0,$
\begin{align}
\|\langle v\rangle^{m+\Pi+1}\nabla_x^{\beta}f\|_{L^{1}} \lesssim &\Big\|\Big[\frac{1}{\epsilon}\langle v\rangle^{2m+2\Pi+12}+\epsilon\langle v\rangle^{-10}(-\Delta_{x}+1)^{20}\Big]f \Big\|_{L^{1}}.\label{eq:holder}
\end{align}
Instead of proving this directly, we find an equivalent form, 
\begin{align}
\epsilon \Big\|\langle v\rangle^{m+\Pi+1}\nabla_x^{\beta}\Big[\langle v\rangle^{2m+2\Pi+22}+\epsilon^2(-\Delta_{x}+1)^{20}\Big]^{-1} g\Big\|_{L^{1}}\lesssim \|g\|_{L^1},
\end{align}
where $g$ is defined as $$g:=\Big[\frac{1}{\epsilon}\langle v\rangle^{2m+2\Pi+12}+\epsilon\langle v\rangle^{-10}(-\Delta_{x}+1)^{20}\Big]f.$$
The latter is easier to prove. We
Fourier-expand $g$ to be $$g(v,x)=\sum_{\bf{n}\in \mathbb{Z}^3}e^{i\bf{n}\cdot x}g_{{\bf{n}}}(v).$$ 
Then compute directly to obtain the desired result, recall that $|\beta|\leq 4,$
\begin{align}
&\epsilon \Big\|\langle v\rangle^{m+\Pi+1}\nabla_x^{\beta}\Big[\langle v\rangle^{2m+2\Pi+22}+\epsilon^2(-\Delta_{x}+1)^{20}\Big]^{-1} g\Big\|_{L^{1}(\mathbb{R}^3\times \mathbb{T}^3)}\nonumber\\
\leq &\sum_{{\bf{n}}}\epsilon (1+|{\bf{n}}|)^4 \Big\|\frac{\langle v\rangle^{m+\Pi+1}}{\langle v\rangle^{2m+2\Pi+22}+\epsilon^2(1+|{\bf{n}}|^2)^{20}} g_{{\bf{n}}}\Big\|_{L^1(\mathbb{R}^3)}\nonumber\\
\leq &\sum_{{\bf{n}}}\|\frac{1}{(1+|{\bf{n}}|^2)^{6}} g_{{\bf{n}}}\|_{L^1(\mathbb{R}^3)}\nonumber\\
\lesssim &\|g\|_{L^1(\mathbb{R}^3\times \mathbb{T}^3)}
\end{align}  where in the second step we used the H\"older's inequality, and in the last step we used that $$\|g_{n}\|_{L^1(\mathbb{R}^3)}=\frac{1}{(2\pi)^3}\| \langle e^{i{\bf{n}}\cdot x},\ g\rangle_{\mathbb{T}^3}\|_{L^1(\mathbb{R}^3)}\leq \frac{1}{(2\pi)^3}\|  g\|_{L^1(\mathbb{R}^3\times \mathbb{T}^3)}$$
and 
$
\displaystyle\sum_{{\bf{n}}\in \mathbb{Z}^3} \frac{1}{(1+|{\bf{n}}|^2)^{6}}\lesssim 1.
$

After proving \eqref{eq:holder}, we
apply H\"older's inequality on both terms to obtain
\begin{align}
\|\langle v\rangle^{m+\Pi+1}\nabla_x^{\beta}f\|_{L^{1}}\lesssim& \frac{1}{\epsilon} \|\langle v\rangle^{2m+2\Pi+12}f\|_{L^{1}}+\epsilon\|\langle v\rangle^{-10}(-\Delta_{x}+1)^{20}\ f\|_{L^{1}}\nonumber\\
\lesssim& \frac{1}{\epsilon}\Big[ \frac{1}{\epsilon^2}\|\langle v\rangle^{m}f\|_{L^{1}} +\epsilon^2\|\langle v\rangle^{3m+4\Pi+24}f\|_{L^{1}}\Big]+\epsilon\|(-\Delta_{x}+1)^{20}\ f\|_{L^{2}},\label{eqn:epsilonFix}
\end{align}  where, in controlling the second term we used the facts that $\langle v\rangle^{-10}\in L^2(\mathbb{R}^3)$, and $L^2(\mathbb{T}^3)\subset L^1(\mathbb{T}^3)$. 
What is left is to set 
\begin{align}
\epsilon=\|\langle v\rangle^{m}f\|_{L^{1}}^{\frac{1}{4}}
\end{align} in \eqref{eqn:epsilonFix}, and obtain the desired result \eqref{eq:claim3}.

\section{Propagator Estimates: Proof of Theorem \ref{THM:propagator}}\label{sec:propagatorEst}

To simplify the notations, we fix the constant $T$ and vector $\mu$ to be
\begin{align}
T=\frac{1}{2},\ \mu=0
\end{align} and for the operators $L_{\frac{1}{2},0},\ \nu_{\frac{1}{2},0},\ K_{\frac{1}{2},0}$ and $P^{\frac{1}{2},0}$ and for the Maxwellian $M_{\frac{1}{2},0}$ we adopt new notations,
\begin{align}
L:=L_{\frac{1}{2},0},\ \nu:=\nu_{\frac{1}{2},0},\ K:=K_{\frac{1}{2},0},\ P:=P^{\frac{1}{2},0},\ M:=M_{\frac{1}{2},0}.\label{eq:abbre}
\end{align}

It is easy to see that our arguments, in what follows, can be easily adapted to general cases.

The proof are based on previous results in \cite{CerIllPulv, YGuo2003, YGuo2002, MR2301289}, where it was proved that the operator $L$, mapping the space $$M^{\frac{1}{2}}L^2:=\Big\{f:\mathbb{R}^3\times \mathbb{T}^3\rightarrow \mathbb{C}\ \Big|\ \|M^{-\frac{1}{2}}f\|_{L^2}<\infty\Big\}$$ into itself, has an eigenvalue $0$ with eigenvectors listed in \eqref{eq:eigenvectors}, and it has a gap with the other parts of the spectrum.
By these we establish the crucial identity \eqref{eq:IntContour} below. 

Besides these, in proving Theorem \ref{THM:propagator}, we adopt the same strategy as in \cite{FrGan2012}, to circumvent the difficulty that the spectrum of $L$ is ``too big".

We start with outlining the general strategy of the proof.

There are two typical approaches to proving decay estimates for propagators. The first one is to use the identity $$e^{-tL}(1-P)=\frac{1}{2\pi i}\oint_{\Gamma} e^{-t\lambda}(\lambda -L)^{-1}\ d\lambda$$ where the contour $\Gamma$ is a curve encircling the spectrum of $L(1-P).$ The obstacle is that the spectrum of $L(1-P)$ occupies the entire right half of the complex plane, except for a strip in a neighborhood of the imaginary axis, as illustrated in Figure \ref{fig:FigureExample} below. This makes it difficult to prove strong convergence on $L^1$ of the integral on the right hand side.
\begin{figure}[htb!]
\centering%
\includegraphics[width=8cm]{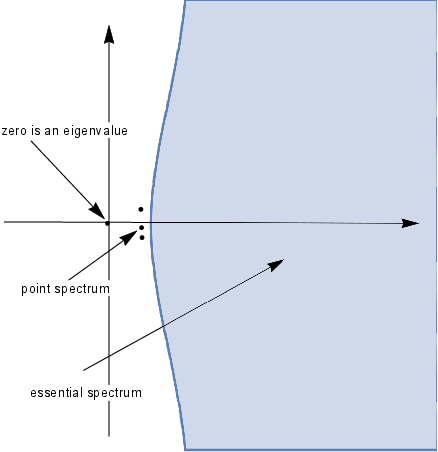}
\caption{The Spectrum of $L$}
\label{fig:FigureExample}
\end{figure}

The second approach is to use perturbation theory, which amounts to expanding $e^{-tL}$ in powers of the operator $K,$ (see \eqref{eq:difK}): $$e^{-tL}=e^{-t(\nu+v\cdot\nabla_{x})}+\int_{0}^{t}e^{-(t-s)(\nu+v\cdot\nabla_x)}K e^{-s(\nu+v\cdot\nabla_x)}\ ds+\cdots.$$ It will be shown in Proposition \ref{prop:easyEst} that each term in this expansion can be estimated quite well, but the fact that $K$ is unbounded forces us to estimate them in different spaces.

We will combine these two approaches to prove Theorem \ref{THM:propagator}, by following the steps in \cite{FrGan2012}.

We expand the propagator $e^{-tL}(1-P)$ using Duhamel's principle:
\begin{equation}\label{eq:durha}
e^{-tL}(1-P)= \sum_{k=0}^{12} (1-P)A_{k}(t)+(1-P)\tilde{A}(t),
\end{equation}
where the operators $A_{k}$ are defined recursively, with
\begin{equation}\label{eq:difA0}
A_{0}=A_{0}(t):= e^{-t (\nu+v\cdot \nabla_{x})},
\end{equation} and $A_{k},\ k=1,2,\cdots,12,$ given by
\begin{equation}
A_{k}(t):= \int_{0}^{t} e^{-(t-s)(\nu+v\cdot \nabla_{x})}K A_{k-1}(s)\ ds .
\end{equation}
Finally $\tilde{A}$ is defined by
\begin{equation}
\tilde{A}(t)=\int_{0}^{t} e^{-(t-s)L}K A_{12}(s)\ ds.
\end{equation}
The exact form of $A_{k},\ k=0,\ 1,\cdots, 12,$ implies the following estimates. 

Recall that $\Lambda:=\inf_{v}\nu(v)>0.$
\begin{proposition}\label{prop:easyEst}
For any $C_0\in (0,\ \Lambda)$, there exists a positive constant $C_1$ such that, for any function $f: \ \mathbb{R}^{3}\times \mathbb{T}^{3}\rightarrow \mathbb{C},$
\begin{equation}\label{eq:explEst}
\|\langle v\rangle^{m}A_{k}(t)f\|_{L^{1}(\mathbb{R}^{3}\times \mathbb{T}^3)}\leq C_1 e^{-C_0 t} \|\langle v\rangle^{m+k}f\|_{L^{1}(\mathbb{R}^{3}\times \mathbb{T}^3)}.
\end{equation}
\end{proposition}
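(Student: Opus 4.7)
My plan is to prove the estimate by induction on $k$, combining the explicit form of the absorbed free-streaming semigroup $A_0$ with the weight-gain property of $K$ from \eqref{eq:mK1}.

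For the base case $k=0$, since $\nu(v)$ depends only on $v$ the operators $\nu$ and $v \cdot \nabla_x$ commute, so
$$A_0(t) f(v, x) = e^{-t\nu(v)} f(v, x - tv),$$
with the shift in $x$ taken modulo $(2\pi\mathbb{Z})^3$. Because Haar measure on $\mathbb{T}^3$ is translation-invariant, free transport preserves $\|f(v, \cdot)\|_{L^1(\mathbb{T}^3)}$ for every fixed $v$; combining this with the pointwise lower bound $\nu(v) \geq \Lambda$ from \eqref{eq:lowB} and Fubini yields
$$\|\langle v \rangle^m A_0(t) f\|_{L^1(\mathbb{R}^3 \times \mathbb{T}^3)} \leq e^{-t \Lambda} \|\langle v \rangle^m f\|_{L^1(\mathbb{R}^3 \times \mathbb{T}^3)},$$
which is stronger than what is required since $C_0 < \Lambda$.

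For the inductive step, I would apply the base-case bound to the outer propagator $e^{-(t-s)(\nu + v\cdot\nabla_x)}$ acting on $K A_{k-1}(s) f$, then use \eqref{eq:mK1} (lifted from $L^1(\mathbb{R}^3)$ to $L^1(\mathbb{R}^3\times \mathbb{T}^3)$ by integrating in $x$, since $K$ acts only in $v$), and finally the induction hypothesis at weight $m+1$:
$$\|\langle v\rangle^m A_k(t) f\|_{L^1} \leq \Upsilon_m C_{k-1} \int_0^t e^{-(t-s)\Lambda}\, e^{-s C_0} \, ds \cdot \|\langle v\rangle^{m+k} f\|_{L^1}.$$
The elementary identity
$$\int_0^t e^{-(t-s)\Lambda}\, e^{-s C_0} \, ds = \frac{e^{-t C_0} - e^{-t \Lambda}}{\Lambda - C_0} \leq \frac{e^{-t C_0}}{\Lambda - C_0}$$
then closes the induction with $C_k = \Upsilon_m C_{k-1}/(\Lambda - C_0)$, and after $12$ steps we obtain the desired uniform-in-$k$ bound.

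There is no serious obstacle in this argument: the two ingredients --- the pointwise bound $\nu(v) \geq \Lambda > 0$ and the one-power weight loss in \eqref{eq:mK1} --- are exactly calibrated to the claim. The one delicate point is that $C_0$ must be taken strictly less than $\Lambda$; this gap $\Lambda - C_0$ is precisely what allows us to absorb the polynomial-in-$t$ factors produced by nested time integrals, since at $C_0 = \Lambda$ the convolution $\int_0^t e^{-(t-s)\Lambda} e^{-s\Lambda} ds = t e^{-t\Lambda}$ would pick up an extra factor of $t$ at each induction step.
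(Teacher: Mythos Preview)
Your proof is correct and follows essentially the same approach as the paper: both use the pointwise bound $\nu\geq\Lambda$ for $A_0$ and then iterate via Duhamel, invoking \eqref{eq:mK1} at each application of $K$. The only difference is bookkeeping: the paper keeps the full rate $\Lambda$ in the inner term and obtains $\|\langle v\rangle^m A_k(t)f\|_{L^1}\lesssim t^k e^{-\Lambda t}\|\langle v\rangle^{m+k}f\|_{L^1}$, absorbing the polynomial $t^k$ into $C_1 e^{-C_0 t}$ at the very end, whereas you spend the gap $\Lambda-C_0$ at each step of the induction to avoid ever seeing the factor $t^k$; you yourself identify this equivalence in your final paragraph.
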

This proposition is proven in Subsection \ref{subsec:ProofEEst}.

Next we estimate $\tilde{A}$, which is given by
$$\tilde{A}=\int_{0}^{t} e^{-(t-s_1)L}K \int_{0}^{s_1} e^{-(s_1-s_2) (\nu+v\cdot\nabla_{x})}K\cdots \int_{0}^{s_{12}}e^{-(s_{12}-s_{13})(\nu+v\cdot\nabla_{x})}K e^{-s_{13}(\nu+v\cdot\nabla_{x})}\ ds_{13}\cdots ds_1. $$

We start with transforming $\tilde{A}$ into a more convenient form.

One of the important properties of the operators $L$ is that, for any function $g:\ \mathbb{R}^{3}\rightarrow \mathbb{C}$ (i.e., independent of $x$) and ${\bf{n}}\in \mathbb{Z}^3,$ we have that
\begin{align}
Pe^{i{\bf{n}}\cdot x}g=&0\ \ \text{if}\ {\bf{n}}\not=0,\nonumber\\
L e^{i{\bf{n}}\cdot x}g=&e^{i{\bf{n}}\cdot x}L_{{\bf{n}}}g, \label{eq:observation}\\
(\nu+v\cdot\nabla_x)e^{i{\bf{n}}\cdot x}g=&e^{i{\bf{n}}\cdot x}(\nu+i{\bf{n}}\cdot v)g,\nonumber
\end{align} where the operator $L_{{\bf{n}}}$ is unbounded and defined as $$L_{{\bf{n}}}:=\nu+i{\bf{n}}\cdot v+K.$$ Recall that $P$ has been defined in \eqref{eq:difProjection}.

To make \eqref{eq:observation} applicable, we Fourier-expand the function $g:\ \mathbb{R}^{3}\times \mathbb{T}^{3}\rightarrow \mathbb{C}$ in the variable $x,$ i.e.,
\begin{equation}\label{eq:dirSum}
g(v,x)=\sum_{{\bf{n}}\in \mathbb{Z}^{3}} e^{i{\bf{n}}\cdot x} g_{{\bf{n}}}(v).
\end{equation} Then use \eqref{eq:observation} and compute directly to obtain
\begin{equation}\label{eq:decomA}
\|(1-P)\tilde{A}g\|_{L^{1}(\mathbb{R}^3\times \mathbb{T}^3)}\leq \sum_{{\bf{n}}\in \mathbb{Z}^{3}}\|\tilde{A}_{{\bf{n}}} g_{n}\|_{L^{1}(\mathbb{R}^3)},
\end{equation} where $\tilde{A}_{{\bf{n}}}$ is defined as follows: If ${\bf{n}}\not=(0,0,0)$ then
$$
\tilde{A}_{{\bf{n}}}:=\int_{0}^{t} e^{-(t-s_1)L_{{\bf{n}}}}K \int_{0}^{s_1} e^{-(s_1-s_2) (\nu+i v\cdot{\bf{n}})}K\cdots \int_{0}^{s_{12}}e^{-(s_{12}-s_{13})(\nu+iv\cdot{\bf{n}})}K e^{-s_{13}(\nu+iv\cdot{\bf{n}})}\ ds_{13}\cdots ds_1
$$ and for ${\bf{n}}=(0,0,0)$ we define $$
\tilde{A}_{0}:=\int_{0}^{t} (1-P)e^{-(t-s_1)L_{0}}K \int_{0}^{s_1} e^{-(s_1-s_2) \nu}K\cdots \int_{0}^{s_{12}}e^{-(s_{12}-s_{13})\nu}K e^{-s_{13}\nu}\ ds_{13}\cdots ds_1.
$$

Next, we study $\tilde{A}_{{\bf{n}}}$, which is defined in terms of the operators $e^{-tL_{{\bf{n}}}}$, $e^{-t[\nu+i {\bf{n}}\cdot v]}$ and $Ke^{-t[\nu+i{\bf{n}}\cdot v]}K.$

It is easy to estimate $e^{-t[\nu+i{\bf{n}}\cdot v]}:$ The fact that the function $\nu$ has a positive global minimum $\Lambda$ (see \eqref{eq:LambdaT}) implies that
\begin{equation}\label{eq:exactForm}
 \|e^{-t[\nu+i{\bf{n}}\cdot v]}\|_{L^{1}\rightarrow L^1}\leq e^{-\Lambda t}.
\end{equation}

Next we consider operator $e^{-tL_{{\bf{n}}}}$.

The result is:
\begin{lemma}\label{LM:roughEst}
Suppose that $m$ is sufficiently large.
There exist constants $C_0,\ C_1>0,$ such that for any time $t\geq 0$ and ${\bf{n}}\not= (0,0,0)$, we have
\begin{equation}\label{eq:est000}
\|e^{-tL_{{\bf{n}}}}\|_{\langle v\rangle^{-m}L^{1}(\mathbb{R}^{3})\rightarrow \langle v\rangle^{-m}L^{1}(\mathbb{R}^{3})} \leq C_{1}(1+|{\bf{n}}|) e^{-C_{0}t},
\end{equation} and for ${\bf{n}}=(0,0,0)$
\begin{equation}\label{eq:estNot0}
\|e^{-tL_{{\bf{0}}}}(1-P)\|_{\langle v\rangle^{-m}L^{1}(\mathbb{R}^{3})\rightarrow \langle v\rangle^{-m}L^{1}(\mathbb{R}^{3})} \leq C_{1} e^{-C_{0}t}.
\end{equation}
\end{lemma}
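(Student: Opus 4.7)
My plan is to transfer the classical spectral-gap estimate for $L_{\bf n}$ on the Gaussian-weighted Hilbert space $M^{1/2}L^2(\mathbb{R}^3)$ (established by Guo and by Cercignani--Illner--Pulvirenti) to the polynomially weighted $L^1$-space, by a finite Duhamel iteration that is a single-mode analogue of the expansion \eqref{eq:durha}. From the cited references, together with the observation \eqref{eq:observation}, $L_{\bf n}$ has spectrum contained in $\{\operatorname{Re} z \geq C_0\}$ for some $C_0 > 0$ uniformly in ${\bf n}$, the sole exception being the zero eigenvalue at ${\bf n} = 0$ with the five eigenvectors \eqref{eq:eigenvectors}. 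Consequently, on $M^{1/2}L^2$ one obtains
$$\|e^{-tL_{\bf n}}f\|_{M^{1/2}L^2} \lesssim (1+|{\bf n}|)e^{-C_0 t}\|f\|_{M^{1/2}L^2},\qquad \|e^{-tL_0}(1-P)f\|_{M^{1/2}L^2} \lesssim e^{-C_0 t}\|f\|_{M^{1/2}L^2},$$
where the $(1+|{\bf n}|)$ loss reflects the ${\bf n}$-dependence of the resolvent $(z-L_{\bf n})^{-1}$ on a contour encircling the spectrum.

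To pass to weighted $L^1$, I expand via Duhamel,
$$e^{-tL_{\bf n}} = \sum_{k=0}^{N-1} B_{k,{\bf n}}(t) + R_{N,{\bf n}}(t),\qquad R_{N,{\bf n}}(t) := \int_0^t e^{-(t-s)L_{\bf n}} K\, B_{N-1,{\bf n}}(s)\,ds,$$
with $B_{0,{\bf n}}(t) := e^{-t(\nu+i{\bf n}\cdot v)}$ and $B_{k,{\bf n}}(t) := \int_0^t e^{-(t-s)(\nu+i{\bf n}\cdot v)}K\, B_{k-1,{\bf n}}(s)\,ds$. The pointwise identity $|e^{-s(\nu+i{\bf n}\cdot v)}| = e^{-s\nu}\leq e^{-\Lambda s}$ together with the mapping property \eqref{eq:mK1} yields, uniformly in ${\bf n}$, the weighted-$L^1$ bound $\|\langle v\rangle^m B_{k,{\bf n}}(t)f\|_{L^1}\lesssim e^{-C_0 t}\|\langle v\rangle^{m+k}f\|_{L^1}$, exactly as in Proposition~\ref{prop:easyEst}. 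The case ${\bf n}=0$ requires an additional factor $(1-P)$ inserted in front of $e^{-(t-s)L_0}$ in the remainder, which is compatible with this expansion since $P$ commutes with $L_0$.

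For the remainder $R_{N,{\bf n}}$, the crucial observation is that each piece of $K$ in \eqref{eq:difK2} carries a Gaussian factor of $M$, so for any polynomial weight $K$ is continuous from $\langle v\rangle^{-p}L^1$ into $M^{1/2}L^2$. Taking $N$ large enough, one obtains $\|K\, B_{N-1,{\bf n}}(s)f\|_{M^{1/2}L^2}\lesssim e^{-C_0 s}\|\langle v\rangle^{m+N}f\|_{L^1}$; applying the Hilbert-space decay to $e^{-(t-s)L_{\bf n}}$ and closing via the elementary Cauchy--Schwarz embedding $\|\langle v\rangle^m u\|_{L^1}\leq \|\langle v\rangle^m M^{1/2}\|_{L^2}\|u\|_{M^{1/2}L^2}$ completes the argument. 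The main obstacle is verifying the ${\bf n}$-dependence in the Hilbert-space estimate: one must revisit the resolvent analysis of the cited works to confirm both that the rate $C_0$ is genuinely uniform in ${\bf n}$ and that the pre-factor loss along the contour grows at most linearly in $|{\bf n}|$, a consequence of the skew-adjoint structure of $i{\bf n}\cdot v$ and the relative compactness of $K$ on the Gaussian-weighted space.
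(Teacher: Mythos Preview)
Your strategy has a genuine gap at the step where you claim that $K$ maps $\langle v\rangle^{-p}L^{1}$ into $M^{1/2}L^{2}$. This is false for the pieces $K_{2}$ and $K_{3}$. The Gaussian factor in \eqref{eq:difK2} sits on $M(u')$ or $M(v')$, not on the output variable $v$; after the Grad change of variables the kernel of $K_{2}+K_{3}$ is (see \eqref{eq:difK})
\[
2\pi\,|u-v|^{-1}\exp\Bigl(-\tfrac{|(u-v)\cdot v|^{2}}{|u-v|^{2}}\Bigr),
\]
which equals $2\pi|u-v|^{-1}$ whenever $u-v\perp v$. Along that two-plane one has $|u|\ge|v|$, so a polynomially decaying input $f(u)=\langle u\rangle^{-p}$ produces an output $(K_{2}+K_{3})f(v)$ that decays only like $|v|^{1-p}$ for large $|v|$ (integrate in polar coordinates around $v$ and note that the set $\{\sigma\in S^{2}:|\sigma\cdot v|\lesssim1\}$ has measure $\sim|v|^{-1}$). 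Thus $M^{-1/2}(K_{2}+K_{3})f$ blows up like $e^{|v|^{2}/2}|v|^{1-p}$ and is never in $L^{2}$. The bound \eqref{eq:mK1} already encodes this: $K$ sends polynomial weights to polynomial weights, losing one power, and no better.

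The paper confronts exactly this obstruction and does \emph{not} route the argument through $M^{1/2}L^{2}$ at the semigroup level. Instead it proves the contour identity \eqref{eq:IntContour} and then establishes a uniform resolvent bound $\|(L_{{\bf n}}-\zeta)^{-1}\|_{\langle v\rangle^{-m}L^{1}\to\langle v\rangle^{-m}L^{1}}\le C$ for all $\zeta\in\Gamma_{{\bf n}}$ (Lemma~\ref{LM:distanceContour}); the factor $1+|{\bf n}|$ then arises purely from the length of $\Gamma_{1}({\bf n})$. The passage to the small Hilbert space is used only inside the proof of that resolvent bound (Proposition~\ref{prop:compact}), and there it requires the Mouhot truncation $K_{\delta}$ of \eqref{eq:cutoff}: it is precisely the compact support built into $K_{\delta}$ that makes $M^{-1/2}K_{\delta}\langle v\rangle^{-m}:L^{1}\to L^{2}$ bounded, while the remainder $K-K_{\delta}$ is absorbed by $\nu$ via the Povzner inequality. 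Your Duhamel-and-embed scheme could be repaired along the same lines, but not without introducing this splitting; as written, the remainder term $R_{N,{\bf n}}$ cannot be controlled.
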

This lemma will be proven in Section \ref{sec:RoughEst}.

The most important step is to estimate $$K_{t}^{({\bf{n}})}:= Ke^{-t(\nu+i{\bf{n}}\cdot v)} K.$$ It is well known that the operator $K$, defined in \eqref{eq:difK2}, has an integral kernel $K(v,u)$: for any function $f:\ \mathbb{R}^3\rightarrow \mathbb{C},$ 
\begin{align}
K(f)=K_1(f)-K_2(f)-K_3(f)
\end{align} with integral kernels taking the form
\begin{align}
K_1(f)=&\pi e^{-|v|^2}\int_{\mathbb{R}^3} |u-v| f(u)\ d^3 u\label{eq:difK},\\
K_2(f)+K_3(f)=& 2\pi \int_{\mathbb{R}^3} |u-v|^{-1} e^{ -\frac{|(u-v)\cdot v|^2}{|u-v|^2}} f(u)\ d^3 u\nonumber.
\end{align}
Here we derive the integral kernels of $K_l,\ l=1,2,3,$ from the explicit form of the operator $e^{\frac{1}{2}|v|^2}Ke^{-\frac{1}{2}|v|^2}$ in \cite{Glassey1996}, (see also \cite{MR0156656, CourHil1989}).

Then the integral kernel, $K_{t}^{({\bf{n}})}(v,u),$ of $K_{t}^{({\bf{n}})}$ is given by
$$K_{t}^{({\bf{n}})}(v,u)=\int_{\mathbb{R}^{3}}K(v,z)e^{-t[\nu(z)+i{\bf{n}}\cdot z]}K(z,u)\ dz$$ for some properly defined function $K(v,u)$. The presence of the factor $e^{-it{\bf{n}}\cdot z}$ plays a critically important role. It makes the operator $K_{t}^{({\bf{n}})}$ smaller, as $|{\bf{n}}|$ becomes larger.
Recall that $\Lambda:=\inf_{v}\nu(v)>0.$
\begin{lemma}\label{LM:oscillate}
There exists a positive constant $C_{1}$ such that, for any ${\bf{n}}\in \mathbb{Z}^{3}$ and $t\geq 0,$
\begin{equation}\label{eq:oscilate}
\|K_{t}^{({\bf{n}})} f\|_{ \langle v\rangle^{-m}L^{1}(\mathbb{R}^{3})} \leq \frac{C_1}{1+|{\bf{n}}|t} e^{-\Lambda t}\|\langle v\rangle^{3} f\|_{\langle v\rangle^{-m}L^{1}(\mathbb{R}^{3})}.
\end{equation}
\end{lemma}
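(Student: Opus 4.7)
The plan is to exploit the oscillatory factor $e^{-it\mathbf{n}\cdot z}$ present in the kernel representation
$$K_t^{(\mathbf{n})}(v,u)=\int_{\mathbb{R}^3} K(v,z)\, e^{-t\nu(z)}\, e^{-it\mathbf{n}\cdot z}\, K(z,u)\, dz,$$
and to split the argument into the two regimes $|\mathbf{n}|t\le 1$ and $|\mathbf{n}|t\ge 1$, on which the target factor $\tfrac{1}{1+|\mathbf{n}|t}$ is, respectively, of order one and of order $(|\mathbf{n}|t)^{-1}$.

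In the regime $|\mathbf{n}|t\le 1$ a trivial bound already suffices: discard the oscillation by bounding $|e^{-it\mathbf{n}\cdot z}|\le 1$, use $e^{-t\nu(z)}\le e^{-\Lambda t}$ (cf.\ \eqref{eq:LambdaT},~\eqref{eq:lowB}), and apply Lemma~\ref{LM:EstNonline} twice (once for each factor of $K$) to obtain
$$\|K_t^{(\mathbf{n})}f\|_{\langle v\rangle^{-m}L^1}\lesssim e^{-\Lambda t}\|\langle v\rangle^{2}f\|_{\langle v\rangle^{-m}L^1}.$$
Each application of Lemma~\ref{LM:EstNonline} costs one power of $\langle v\rangle$, which already accounts for two of the three powers lost on the right-hand side of~\eqref{eq:oscilate}.

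For $|\mathbf{n}|t\ge 1$ and $\mathbf{n}\ne 0$ the plan is to extract an extra factor $(t|\mathbf{n}|)^{-1}$ through a single integration by parts in $z$. To avoid the loss of decay that would arise from differentiating $e^{-t\nu(z)}$ on its own (which would spawn a factor $t|\nabla\nu|$ that is awkward to absorb without sacrificing the sharp rate $\Lambda$), I would differentiate the combined exponent $-t[\nu(z)+i\mathbf{n}\cdot z]$: setting $\hat{\mathbf{n}}:=\mathbf{n}/|\mathbf{n}|$ one has the identity
$$e^{-t[\nu(z)+i\mathbf{n}\cdot z]}=\frac{\hat{\mathbf{n}}\cdot\nabla_z\, e^{-t[\nu(z)+i\mathbf{n}\cdot z]}}{-t\bigl[\hat{\mathbf{n}}\cdot\nabla\nu(z)+i|\mathbf{n}|\bigr]},$$
and since the imaginary part of the denominator equals $|\mathbf{n}|$, the bound $\bigl|\hat{\mathbf{n}}\cdot\nabla\nu(z)+i|\mathbf{n}|\bigr|\ge|\mathbf{n}|$ holds uniformly in $z$. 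Integration by parts transfers $\hat{\mathbf{n}}\cdot\nabla_z$ onto the smooth quotient $\frac{K(v,z)K(z,u)}{-t[\hat{\mathbf{n}}\cdot\nabla\nu(z)+i|\mathbf{n}|]}$, yielding an overall prefactor of order $\tfrac{1}{t|\mathbf{n}|}$ together with the retained exponential factor $e^{-t\nu(z)}\le e^{-\Lambda t}$. Estimating the two resulting pieces (derivative of the numerator, derivative of the denominator) in weighted $L^1$ against the explicit formulas~\eqref{eq:difK} for $K$, and using the polynomial growth of $\nabla\nu$, one obtains
$$\|K_t^{(\mathbf{n})}f\|_{\langle v\rangle^{-m}L^1}\lesssim \frac{e^{-\Lambda t}}{t|\mathbf{n}|}\|\langle v\rangle^{3}f\|_{\langle v\rangle^{-m}L^1}.$$
Combined with the trivial bound on the complementary regime, this gives~\eqref{eq:oscilate}.

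The main technical obstacle is the kernel-level bookkeeping for $\nabla_z[K(v,z)K(z,u)]$ in weighted $L^1$. Differentiating the factor $|v-z|^{-1}$ from $K_2+K_3$ in~\eqref{eq:difK} yields a $|v-z|^{-2}$ singularity, which is only borderline locally integrable in $\mathbb{R}^3$; one has to control it against the Gaussian-times-polynomial structure of $K$ and track carefully how the resulting $z$-integrals depend polynomially on $v$ and $u$, together with the $|z|\to\infty$ growth contributed by the $K_1$ piece. These estimates are what dictate the precise power $\langle v\rangle^{3}$ that must be paid on the right-hand side of~\eqref{eq:oscilate}; once they are in place, the rest of the argument is routine bookkeeping of weights.
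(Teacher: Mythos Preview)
Your proposal is correct and follows essentially the same approach as the paper: a single integration by parts in the oscillatory variable (the paper picks the coordinate $z_1$ with $|n_1|\ge\tfrac13|\mathbf n|$ rather than the direction $\hat{\mathbf n}$, but this is equivalent), combined with the trivial bound to handle small $|\mathbf n|t$. The paper likewise identifies the $|v-z|^{-2}$ singularity from differentiating the $K_2+K_3$ kernel as the main technical point and controls it by splitting into $|v|\le 10|z|$ and $|v|>10|z|$, arriving at the same weighted kernel estimates you anticipate.
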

This lemma will be proven in Subsection \ref{subsec:oscillate}.

The results in Proposition \ref{prop:easyEst}, Lemma \ref{LM:roughEst} and Lemma \ref{LM:oscillate} suffice to prove Theorem \ref{THM:propagator}.\\
{\bf{Proof of Theorem \ref{THM:propagator}}.} In Equation \eqref{eq:durha} we have decomposed $e^{-tL}(1-P)$ into several terms. The operators $A_{k}, \ k=0,1,2,\cdots, 12,$ are estimated in Proposition \ref{prop:easyEst}.

In what follows, we study $\tilde{A}$. By \eqref{eq:decomA} we only need to control $\tilde{A}_{{\bf{n}}},\ {\bf{n}}\in \mathbb{Z}^3$. For ${\bf{n}}=(0,0,0)$ it is easy to see that
\begin{equation}\label{eq:TildeA0}
\|\langle v\rangle^{m}\tilde{A}_{0}g_{{\bf{0}}}\|_{L^{1}(\mathbb{R}^{3})}\lesssim e^{-\Lambda t} \|\langle v\rangle^{m+12} g_{\bf{0}}\|_{L^{1}(\mathbb{R}^{3})}
\end{equation} by collecting the different estimates in \eqref{eq:exactForm} and Lemma \ref{LM:roughEst} and using the estimates on $K$ in Lemma \ref{LM:EstNonline}.

For ${\bf{n}}\not=0$, we observe that the integrands in the definitions of $\tilde{A}_{{\bf{n}}}$ are products of terms $e^{-(t-s_1)L_{{\bf{n}}}},$ $K e^{-(s_k-s_{k+1})(\nu+i{\bf{n}}\cdot v)}K$ and $e^{-(s_k-s_{k+1})(\nu+i{\bf{n}}\cdot v)}$, where $k\in \{1,2,\cdots,13\}$ (we use the convention that $s_{14}=0$).
Applying the bounds in \eqref{eq:exactForm}, Lemma \ref{LM:roughEst} and Lemma \ref{LM:oscillate}, we see that there is a constant $C_0>0$ such that
\begin{align}
\begin{split}\label{eq:12times}
&\|\langle v\rangle^{m}\tilde{A}_{{\bf{n}}}g_{\bf{n}}\|_{L^{1}(\mathbb{R}^{3})}\\
\lesssim&  e^{-\Lambda t} (1+|{\bf{n}}|)\|\langle v\rangle^{m+20} g_{\bf{n}}\|_{L^{1}(\mathbb{R}^{3})} \times \\
&\int_{0}^{t}\int_{0}^{s_1}\cdots \int_{0}^{s_{12}} [1+|{\bf{n}}|(s_{12}-s_{13})]^{-1} [1+|{\bf{n}}|(s_{10}-s_{11})]^{-1}\cdots [1+|{\bf{n}}|(s_{2}-s_{3})]^{-1}\ ds_{13} ds_{12}\cdots ds_{1}.
\end{split}
\end{align}
Here a key observation is that, even though $\int_{0}^{s}\int_{0}^{s_1} [1+|{\bf{n}}|s_2]^{-1}\ ds_2ds_1$ is not bounded as $s\rightarrow \infty$, the growth in $s$ is modest, and most importantly we can get a small factor $|\bf{n}|^{-1} \ln (1+|\bf{n}| )$ since, for $s\leq t$,
\begin{align}
\begin{split}
 \int_{0}^{s}\int_{0}^{s_1} [1+|{\bf{n}}|s_2]^{-1}\ ds_2ds_1= |{\bf{n}}|^{-1} \int_{0}^{s}\ln (1+|{\bf{n}}| s_1)\ ds_1&\lesssim |{\bf{n}}|^{-1}\ln(1+|{\bf{n}}|) (1+s)^2\\
 &\leq |{\bf{n}}|^{-1}\ln(1+|{\bf{n}}|) (1+t)^2. 
\end{split}
\end{align}
Apply this to the last factor of \eqref{eq:12times} six times, and bound $(1+t)^{12}$ by $C_{\epsilon_0} e^{\epsilon_0 t}$ for any $\epsilon_0>0$, and we find, for any positive constant $\tilde{C}_0< \Lambda$, there exists a constant $C_1>0$ such that
\begin{equation*}
\|\langle v\rangle^{m}\tilde{A}_{{\bf{n}}}g_{\bf{n}}\|_{L^{1}(\mathbb{R}^{3})}\leq C_1  e^{-\tilde{C}_0t}\frac{1}{(1+|{\bf{n}}|)^{4}} \|\langle v\rangle^{m+20} g_{\bf{n}}\|_{L^{1}(\mathbb{R}^{3})}.
\end{equation*} Plugging this and \eqref{eq:TildeA0} into \eqref{eq:decomA}, we find that
\begin{equation}\label{eq:Aprelimi}
\|\langle v\rangle^{m}(1-P)\tilde{A}g\|_{L^{1}(\mathbb{R}^{3}\times\mathbb{T}^{3})} \lesssim C_1 e^{-\tilde{C}_{0}t} \sum_{{\bf{n}}\in \mathbb{Z}^{3}} \frac{1}{(1+|{\bf{n}}|)^{4}} \|\langle v\rangle^{m+20}g_{{\bf{n}}}\|_{L^{1}(\mathbb{R}^{3})}.
\end{equation} The fact $g_{n}=\frac{1}{(2\pi)^3}\langle e^{i{\bf{n}}\cdot x},\ g\rangle_{x}$ makes
\begin{equation*}
\|\langle v\rangle^{m+20}g_{{\bf{n}}}\|_{L^{1}(\mathbb{R}^{3})}\leq (2\pi)^{3} \|\langle v\rangle^{m+20}g\|_{L^{1}(\mathbb{R}^{3}\times \mathbb{T}^{3})}.
\end{equation*} This, together with the fact that $\displaystyle\sum_{{\bf{n}}\in \mathbb{Z}^{3}} \frac{1}{(1+|{\bf{n}}|)^{4}}<\infty,$ implies that
\begin{equation}\label{eq:completeA}
\|\langle v\rangle^{m}(1-P)\tilde{A}g\|_{L^{1}(\mathbb{R}^{3}\times\mathbb{T}^{3})} \lesssim C_1e^{-\tilde{C}_0 t} \|\langle v\rangle^{m+20}g\|_{L^{1}(\mathbb{R}^{3}\times\mathbb{T}^{3})}.
\end{equation}

Obviously Equation \eqref{eq:durha}, Inequality \eqref{eq:completeA} and Proposition \ref{prop:easyEst} imply Theorem \ref{THM:propagator}.
\begin{flushright}
$\square$
\end{flushright}
\subsection{Proof of Proposition \ref{prop:easyEst}}\label{subsec:ProofEEst}
Recall the definition of the constant $\Lambda=\Lambda_{\frac{1}{2}}>0$ in \eqref{eq:LambdaT}.
The definition of $A_0$ (see \eqref{eq:difA0}) implies that
\begin{equation}\label{eq:a0t}
\|\langle v\rangle^{m}A_{0}(t)f\|_{L^{1}(\mathbb{R}^{3}\times \mathbb{T}^{3})}\leq e^{-\Lambda t}\|\langle v\rangle^{m}f\|_{L^{1}(\mathbb{R}^{3}\times \mathbb{T}^{3})}.
\end{equation}

For $A_1,$ we use the estimate for the unbounded operator $K$ given in Lemma \ref{LM:EstNonline}. Compute directly to obtain
\begin{align*}
\|\langle v\rangle^{m}A_{1}(f)\|_{L^{1}(\mathbb{R}^{3}\times \mathbb{T}^{3})}\leq &\int_{0}^{t} e^{-\Lambda(t-s)}\|\langle v\rangle^{m}K e^{-s(\nu+v\cdot\nabla_{x})x}f\|_{L^{1}(\mathbb{R}^{3}\times \mathbb{T}^{3})}\ ds\\
\lesssim& \int_{0}^{t} e^{-\Lambda(t-s)} e^{-\Lambda s}\ ds \|\langle v\rangle^{m+1} f\|_{L^{1}}\\
= &e^{-\Lambda t}t \|\langle v\rangle^{m+1} f\|_{L^{1}}.
\end{align*}
Similar arguments yield the desired estimates for $A_{k},\ k=2,3,\cdots 12$.

Thus, the proof of Proposition \ref{prop:easyEst} is complete.

\begin{flushright}
$\square$
\end{flushright}

\subsection{Proof of Inequality \eqref{eq:oscilate}}\label{subsec:oscillate}
\begin{proof}
We denote the integral kernel of the operator $K$ by $K(v,u)$ and infer its explicit form from \eqref{eq:difK}.
It is then easy to see that the integral kernel of the operator $Ke^{-t(\nu+i{\bf{n}}\cdot v)}K$ is given by
$$K_{t}^{({\bf{n}})}(v,u):=\int_{\mathbb{R}^{3}}K(v,z) e^{-t [\nu(z)+i{\bf{n}}\cdot z]} K(z,u)\ d^3 z.$$

We use the oscillatory nature of $e^{-it{\bf{n}}\cdot z}$ to derive some ``smallness estimates" when $|{\bf{n}}|$ is sufficiently large, by integrating by parts in the variable $z$.
Without loss of generality we assume that $$|n_{1}|\geq \frac{1}{3}|{\bf{n}}|.$$ Integrate by parts in the variable $z_1$ to obtain
\begin{align}
K_{t}^{({\bf{n}})}(v,u)=&\int_{\mathbb{R}^{3}} K(v,z)K(z,u) \frac{1}{-t[\partial_{z_1} \nu(z)+in_1]} \partial_{z_1} e^{-t [\nu(z)+i{\bf{n}}\cdot z]} \ d^3 z\nonumber\\
=& \int_{\mathbb{R}^{3}} \partial_{z_1} [K(v,z)K(z,u) \frac{1}{t[\partial_{z_1} \nu(z)+in_1]} ] e^{-t [\nu(z)+i{\bf{n}}\cdot z]} \ d^3 z
\end{align}

The different terms in $\partial_{z_1} [K(v,z)K(z,u) \frac{1}{t[\partial_{z_1} \nu(z)+in_1]} ]$ are dealt with as follows.
\begin{itemize}
\item[(1)]
We claim that, for $l=0,1,$ and for any $\Psi\geq 0$, there exists a constant $c(\Psi)>0$ such that
\begin{equation}\label{eq:estKernel}
\int_{\mathbb{R}^{3}}\langle v\rangle^{\Psi}|\partial_{z_{1}}^{l}K(v,z)|\ d^3 v\leq c(\Psi) \langle z\rangle^{\Psi+2},\ \ \int_{\mathbb{R}^{3}}\langle z\rangle^{\Psi}|\partial_{z_{1}}^{l}K(z,u)|\ d^3 z\leq C(\Psi) \langle u\rangle^{\Psi+2}.
\end{equation}
\item[(2)] By direct computation,
\begin{equation}
|\partial_{z}^{l}\frac{1}{t[\partial_{z_1} \nu(z)+in_1]}|\lesssim \frac{1}{|{\bf{n}}|t}\ \text{for}\ l=0,1.
\end{equation}
\end{itemize}
These bounds and the fact that $e^{-t\nu}\lesssim e^{-\Lambda t}$ (see \eqref{eq:LambdaT}) imply that
$$\int_{\mathbb{R}^{3}\times \mathbb{R}^{3}} \langle v\rangle^{\Psi}|K_{t}^{({\bf{n}})}(v,u)g(u)|\ d^3 u\lesssim \frac{e^{-\Lambda t}}{|{\bf{n}}|t}\|\langle v\rangle^{\Psi+3}g\|_{L^{1}}.$$ To remove the non-integrable singularity in the upper bound at $t=0$, we use a straightforward estimate derived from the definition of $K_{t}^{({\bf{n}})}$ to obtain
$$\int_{\mathbb{R}^{3}\times \mathbb{R}^{3}} \langle v\rangle^{\Psi}|K_{t}^{({\bf{n}})}(v,u)g(u)|\ d^3 u\leq C(\Psi) e^{-\Lambda t}\|\langle v\rangle^{\Psi+3}g\|_{L^{1}}.
$$ Combination of these two estimates yields \eqref{eq:oscilate}.

We are left with proving \eqref{eq:estKernel}. In the next we focus on proving \eqref{eq:estKernel} when $l=1$, the case $l=0$ is easier, hence omitted.
By direct computation we find that $$
\begin{array}{lll}
|\partial_{z_{1}}K(v,z)|&\lesssim &  |\partial_{z_{1}}|z-v|^{-1}e^{-\frac{|(z-v)\cdot v|^2}{|z-v|^2}}|+|\partial_{z_{1}}|z-v|e^{-|v|^2}| 
\end{array}
$$
and, similarly, that $$|\partial_{z_{1}}K(z,u)|\lesssim  |\partial_{z_{1}}|z-u|^{-1}e^{-\frac{|(z-u)\cdot z|^2}{|z-u|^2}}|+ |\partial_{z_{1}}|z-u|e^{-|z|^2}|.$$
Among the various terms we only study the most difficult one, namely $\partial_{z_{1}}\tilde{K}(v,z)$, where $\tilde{K}(v,z)$ is defined by $$\tilde{K}(v,z):=|z-v|^{-1}e^{-\frac{|(z-v)\cdot v|^2}{|z-v|^2}}.$$
By direct computation $$|\partial_{z_{1}}\tilde{K}(v,z)|\lesssim \frac{1+|v_1|}{|v-z|^{2}}  e^{-\frac{1}{2}\frac{|(z-v)\cdot v|^2}{|z-v|^2}}.$$
To complete our estimate we divide the set $(v,z)\in \mathbb{R}^{3}\times \mathbb{R}^{3}$ into two subsets defined by $|v|\leq 10|z|$ and $|v|>10|z|,$ respectively. In the first subset we have that
$$|\partial_{z_{1}}\tilde{K}(v,z)|\lesssim \frac{1}{|v-z|^2}(|v|+1)\leq \frac{10(|z|+1)}{|v-z|^2},$$ and hence
\begin{equation}
\int_{|v|\leq 8|z|}\langle v\rangle^{\Psi}|\partial_{z_{1}}\tilde{K}(v,z)|\ d^3 v\leq 10(1+|z|)^{\Psi+1} \int_{|v|\leq 10|z|}\frac{1}{|v-z|^2}\ d^3 v\lesssim (1+|z|)^{\Psi+2}.
\end{equation}
In the second subset we have that $z-v\approx -v$, which implies that $\frac{|(z-v)\cdot v|}{|z-v|}\geq \frac{1}{2}|v|$. Thus, $$|\partial_{z_{1}}\tilde{K}(v,z)|\leq \frac{1+|v|}{|v|^2}e^{-\frac{1}{8}|v|^2}.$$ This obviously implies that
\begin{equation}
\int_{|v|\geq 10|z|}\langle v\rangle^{\Psi}|\partial_{z_{1}}\tilde{K}(v,z)|\ d^3 v\lesssim  \int_{|v|\geq 10|z|}\langle v\rangle^{\Psi}\frac{1+|v|}{|v|^2}\ e^{-\frac{1}{8}|v|^2} d^3 v\lesssim 1.
\end{equation}

By such estimates the proof of \eqref{eq:estKernel} can be easily completed.
\end{proof}

\section{Proof of Lemma \ref{LM:roughEst}}\label{sec:RoughEst}

\begin{proof}
Before we study the linear unbounded operator 
\begin{align}
L_{\bf{n}}:=\nu(v)+iv\cdot {\bf{n}}+K,\ {\bf{n}}\in \mathbb{Z}^3,
\end{align} mapping $\langle v\rangle^{-m}L^{1}(\mathbb{R}^3)$ into the same space, we start with studying $L_{\bf{n}}$, mapping $M^{\frac{1}{2}}L^2(\mathbb{R}^3)$ into itself.
Here the definitions of the spaces $\langle v\rangle^{-m}L^{1}(\mathbb{R}^3)$ and $M^{\frac{1}{2}}L^2(\mathbb{R}^3)$ are
\begin{align}
\langle v\rangle^{-m}L^{1}(\mathbb{R}^3):=\Big\{f:\ \mathbb{R}^3\rightarrow \mathbb{C}| \ \|\langle v\rangle^{m}f\|_{L^1}<\infty\Big\}
\end{align} and
\begin{align}
M^{\frac{1}{2}}L^{2}(\mathbb{R}^3):=\Big\{f:\ \mathbb{R}^3\rightarrow \mathbb{C}| \ \|M^{-\frac{1}{2}}f\|_{L^2}<\infty\Big\}.
\end{align} Here recall that $M=M_{\frac{1}{2},0}$ is the Maxwellian solution, see \eqref{eq:abbre}.

Denote the spectrum of the unbounded linear operator $L_{\bf{n}}$, mapping $M^{\frac{1}{2}}L^{2}(\mathbb{R}^3)$ into itself, by $\sigma(L_{\bf{n}}).$ Then since $K$ is a compact operator in the chosen space, we have that
\begin{align}
\sigma(L_{\bf{n}})=\sigma_{d}(L_{\bf{n}})\cup \sigma_{ess}(L_{\bf{n}}).
\end{align}
Recall that $L_{\bf{n}}$ is related to $L:=\nu(v)+v\cdot \nabla_x+ K$ by the fact that
\begin{align*}
L e^{i {\bf{n}}\cdot x}f=e^{i {\bf{n}}\cdot x}L_{\bf{n}}f. 
\end{align*} Hence if $f$ is an eigenvector for $L_{\bf{n}}$ in the space $M^{\frac{1}{2}}L^2(\mathbb{R}^3)$, then $e^{i {\bf{n}}\cdot x}f$ is an eigenvector for $L$ in the $M^{\frac{1}{2}}L^2(\mathbb{R}^3\times \mathbb{T}^3)$ space, with the same eigenvalue.

By this we have the following results.
\begin{lemma}\label{LM:subspace}
If $f:\ \mathbb{R}^3\rightarrow \mathbb{C}$ is an eigenvector for $L_{\bf{n}}$ in the space $M^{\frac{1}{2}}L^2(\mathbb{R}^3)$, then $e^{i {\bf{n}}\cdot x}f$ is an eigenvector for $L$ in the $M^{\frac{1}{2}}L^2(\mathbb{R}^3\times \mathbb{T}^3)$ space, with the same eigenvalue.

The set of eigenvalues of $L_{\bf{n}}: \ M^{\frac{1}{2}}L^2(\mathbb{R}^3)\rightarrow  M^{\frac{1}{2}}L^2(\mathbb{R}^3)$, is a subset of that of $L:\ M^{\frac{1}{2}}L^2(\mathbb{R}^3\times \mathbb{T}^3)\rightarrow M^{\frac{1}{2}}L^2(\mathbb{R}^3\times \mathbb{T}^3)$.

Moreover if $f\in M^{\frac{1}{2}}L^2(\mathbb{R}^3\times \mathbb{T}^3)$ is an eigenvector of $L$ with eigenvalue $\lambda$, and $\langle f,\ e^{i{\bf{n}}\cdot x}\rangle_{\mathbb{T}^3}=\int_{\mathbb{T}^3} f(v,x) e^{i {\bf{n}}\cdot x}\ dx\not=0$, then $\langle f,\ e^{i{\bf{n}}\cdot x}\rangle_{\mathbb{T}^3}\in  M^{\frac{1}{2}}L^2(\mathbb{R}^3)$ is an eigenvector of $L_{\bf{n}}$ with eigenvalue $\lambda.$
\end{lemma}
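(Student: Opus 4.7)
The plan is to use the single commutation identity
\[
L\,e^{i\mathbf{n}\cdot x}f = e^{i\mathbf{n}\cdot x}L_{\mathbf{n}}f,
\]
already recorded in \eqref{eq:observation}, as the engine driving all three assertions, together with the fact that the weight $M^{-1/2}$ depends only on $v$ and $|e^{i\mathbf{n}\cdot x}|=1$.

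For the first claim, I would take $f\in M^{1/2}L^2(\mathbb{R}^3)$ with $L_{\mathbf{n}}f=\lambda f$, set $F(v,x):=e^{i\mathbf{n}\cdot x}f(v)$, and compute $LF=e^{i\mathbf{n}\cdot x}L_{\mathbf{n}}f=\lambda F$. Membership of $F$ in $M^{1/2}L^2(\mathbb{R}^3\times\mathbb{T}^3)$ is automatic: since $M=M(v)$ and $\mathbb{T}^3$ has finite measure,
\[
\|M^{-1/2}F\|_{L^2(\mathbb{R}^3\times\mathbb{T}^3)}=(2\pi)^{3/2}\|M^{-1/2}f\|_{L^2(\mathbb{R}^3)}<\infty.
\]
This proves the first assertion, and the second (inclusion of point spectra) follows immediately by applying the first statement to each eigenvalue of $L_{\mathbf{n}}$.

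For the third claim, I would Fourier-expand any given eigenvector $f\in M^{1/2}L^2(\mathbb{R}^3\times\mathbb{T}^3)$ of $L$ in the $x$-variable,
\[
f(v,x)=\sum_{\mathbf{m}\in\mathbb{Z}^3}e^{i\mathbf{m}\cdot x}f_{\mathbf{m}}(v),
\]
and note that, because $M^{-1/2}$ depends only on $v$, this expansion is orthogonal in $M^{1/2}L^2(\mathbb{R}^3\times\mathbb{T}^3)$ with each coefficient $f_{\mathbf{m}}$ lying in $M^{1/2}L^2(\mathbb{R}^3)$. Applying $L-\lambda$ and using \eqref{eq:observation} termwise,
\[
0=(L-\lambda)f=\sum_{\mathbf{m}\in\mathbb{Z}^3}e^{i\mathbf{m}\cdot x}(L_{\mathbf{m}}-\lambda)f_{\mathbf{m}}.
\]
Pairing both sides against $e^{i\mathbf{n}\cdot x}$ in $L^2(\mathbb{T}^3)$ isolates the $\mathbf{n}$-th mode and yields $(L_{\mathbf{n}}-\lambda)f_{\mathbf{n}}=0$; combined with the hypothesis $f_{\mathbf{n}}=\langle f,e^{i\mathbf{n}\cdot x}\rangle_{\mathbb{T}^3}\neq 0$, this is exactly the conclusion.

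The only point requiring care is the justification that $L$ may be applied term by term to the Fourier series in $x$. This is straightforward once it is observed that the three pieces of $L$ behave well with respect to the Fourier decomposition: the multiplication operator $\nu(v)$ and the integral operator $K$ act only in $v$ and therefore commute with multiplication by $e^{i\mathbf{m}\cdot x}$, while $v\cdot\nabla_x(e^{i\mathbf{m}\cdot x}f_{\mathbf{m}})=i(\mathbf{m}\cdot v)\,e^{i\mathbf{m}\cdot x}f_{\mathbf{m}}$. Hence the termwise identity can alternatively be obtained without manipulating the full series at all, by taking the $\mathbf{n}$-th Fourier coefficient of the equation $Lf=\lambda f$ directly and invoking \eqref{eq:observation} mode-by-mode. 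I do not anticipate any real obstacle; essentially all the work has already been done in establishing \eqref{eq:observation}.
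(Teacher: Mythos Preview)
Your proposal is correct and follows exactly the approach the paper takes: the paper does not give a separate proof of this lemma at all, but simply records it as an immediate consequence of the commutation identity \eqref{eq:observation} (``By this we have the following results''), and your argument supplies precisely the routine details behind that remark.
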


To locate the essential spectrum in the space $M^{\frac{1}{2}}L^2(\mathbb{R}^3)$, we use that $K$ is compact to find
\begin{align}
\sigma_{ess}(L_{\bf{n}})=\{\nu(v)+iv\cdot {\bf{n}}| v\in \mathbb{R}^3\}.
\end{align}
By known results, see \cite{MR1313028, CerIllPulv, MR2301289, YGuo2002, YGuo2003}, and Lemma \ref{LM:subspace}, there exist sets $A_{\bf{n}}\subset \mathbb{C}$ such that for ${\bf{n}}\not=(0,0,0)$
\begin{align}
\sigma_{d}(L_{{\bf{n}}})=A_{\bf{n}},\ \text{and}\ \sigma_{ess}(L_{{\bf{n}}})=\Big\{\nu(v)+i{\bf{n}}\cdot v\ |v\in \mathbb{R}^3\Big\};\label{eq:ess}
\end{align} and for ${\bf{n}}=(0,0,0)$
\begin{align}
\sigma_{d}(L_{{\bf{n}}})=\{0\}\cup A_{\bf{n}}, \  \text{and}\ \sigma_{ess}(L_{{\bf{n}}})=\{\nu(v)\ |v\in \mathbb{R}^3\}.\label{eq:ess0}
\end{align} Here the sets $A_{\bf{n}}$ keep a uniform distance from the imaginary axis, specifically, there exists a positive constant $\Lambda$ satisfying
\begin{align}
\Lambda\in \Big(0,\displaystyle\inf_{v\in \mathbb{R}^3}\nu(v)\Big)\label{def:Lambda}
\end{align} such that
\begin{align}
Re\lambda\geq \Lambda>0\ \text{if}\ \lambda\in \cup_{\bf{n}\in \mathbb{Z}^3}A_{\bf{n}}.\label{eq:gap}
\end{align}

\begin{figure}[htb!]
\centering%
\includegraphics[width=8cm]{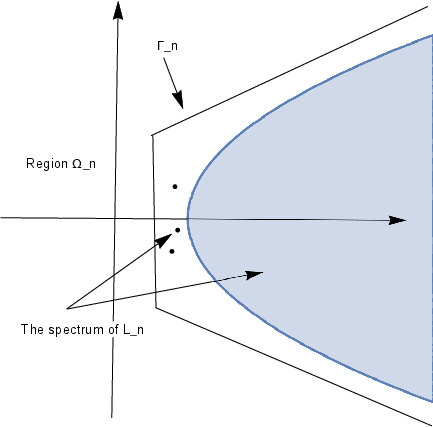}
\caption{The spectrum of $L_{{\bf{n}}}$, the curve $\Gamma_{{\bf{n}}},$ and the region $\Omega_{{\bf{n}}}$    }
\label{fig:Ln}
\end{figure}

In what follows we study $L_{{\bf{n}}},\ {\bf{n}}\not=(0,0,0)$. For ${\bf{n}}=(0,0,0),$ the analysis is similar except that $0$ is an eigenvalue.

Based on the informations about the spectrum of $L_{\bf{n}}$ in \eqref{eq:ess}, we have the following results.
For any ${\bf{n}}\in \mathbb{Z}^{3}\backslash{(0,0,0)},$ we define a curve $\Gamma_{{\bf{n}}}$ (see Figure \ref{fig:Ln}) to encircle the spectrum of $L_{{\bf{n}}}$,
\begin{equation}\label{eq:difCurve}
\Gamma_{{\bf{n}}}:=\Gamma_{1}({\bf{n}})\cup \Gamma_{2}({\bf{n}})\cup \Gamma_{3}({\bf{n}})
\end{equation} with
$$\Gamma_{1}({\bf{n}}):=\Big\{\Theta+i\beta |\ \beta \in [-\Psi(|{\bf{n}}|+1),\ \Psi(|{\bf{n}}|+1)]\Big\};$$
$$\Gamma_{2}({\bf{n}}):=\Big\{\Theta+i(|{\bf{n}}|+1)\Psi+\beta+i \Psi \beta(|{\bf{n}}|+1) ,\ \beta\geq 0\Big\};$$
$$\Gamma_{3}({\bf{n}}):=\Big\{\Theta-i(|{\bf{n}}|+1)\Psi+\beta-i \Psi \beta(|{\bf{n}}|+1) ,\ \beta\geq 0\Big\}.$$ Here $\Psi$ is a large positive constant to be chosen later, see \eqref{eq:ReasonCurve2}, Lemma \ref{LM:distanceContour} and \eqref{eq:ReasonCurve} below; $\Theta>0$ can be any constant in $(0,\ \frac{1}{2}\Lambda)$, with $\Lambda$ being the same one in \eqref{eq:gap}.
Moreover, we define $\Omega_{{\bf{n}}}$ to be the complement of the region encircled by the curve $\Gamma_{{\bf{n}}};$ see Figure \ref{fig:Ln}.

For the multiplication operator $\nu+i{\bf{n}}\cdot v-\zeta,$ if the constant $\Psi$ in the definition of the curves $\Gamma_{k,{\bf{n}}}, \ k=0,1,2,$ in \eqref{eq:difCurve}, are sufficiently large, then there exists a constant $C$ such that for any $\zeta\in\Gamma_{{\bf{n}}}$
\begin{equation}\label{eq:ReasonCurve2}
|\nu+i {\bf{n}}\cdot v-\zeta|^{-1} \leq C(1+|v|+|{\bf{n}}\cdot v|)^{-1}.
\end{equation}
It is straightforward, but a little tedious to verify this. Details are omitted.

Now we develop a convenient representation for $e^{-tL_{{\bf{n}}}}$. By standard technique of functional calculus we have that, for any bounded linear operator $A: L^2\rightarrow L^2$, and for any time $t$
\begin{align}
 e^{tA}=\frac{1}{2\pi i}\oint_{\gamma} e^{t\zeta} [\zeta-A]^{-1} \ d\zeta,\label{eq:functionalCal}
\end{align} where the curve $\gamma$ is closed and encircles the spectrum of $A.$ The identity in \eqref{eq:functionalCal} does not apply directly here since
the operator $L_{\bf{n}}$ is unbounded. However similar problems were considered in many literatures, for example, in Theorem 5.4 of our paper \cite{MR2187292} where a nonself-adjoint Schr\"odinger operator on a matrix was considered. Similar techniques apply to the present situation. Since this is tedious, but not difficult, we choose to skip the details here.

Thus, if $\Psi$ in \eqref{eq:difCurve} is large enough, then for ${\bf{n}}\not= (0,0,0)$ and for any $g\in M^{\frac{1}{2}}L^2$, we have that
\begin{equation}\label{eq:IntContour}
e^{-tL_{{\bf{n}}}}g=\frac{1}{2\pi i}\oint_{\Gamma_{{\bf{n}}}} e^{-t\zeta}[\zeta-L_{{\bf{n}}}]^{-1} \ d\zeta\ g,
\end{equation} recall that $L_{{\bf{n}}}-\zeta=\nu+i {\bf{n}}\cdot v-\zeta+K$. To see $[\zeta-L_{{\bf{n}}}]^{-1}=[\nu+i {\bf{n}}\cdot v-\zeta]^{-1} [1+K(\nu+i {\bf{n}}\cdot v-\zeta)^{-1}]^{-1} $ is well defined and uniformly bounded, we use the key fact that the operator $K:\ M^{\frac{1}{2}}L^2\rightarrow M^{\frac{1}{2}}L^2$ is compact, and discuss two different cases:

\begin{itemize}
\item[(a)]
If $|\zeta|\gg 1$ or $|{\bf{n}}|\gg 1$, then this together with \eqref{eq:ReasonCurve2} implies that $|\nu+i {\bf{n}}\cdot v-\zeta|\gg 1$ everywhere except for a small set, 
this makes the operator $K(\nu+i {\bf{n}}\cdot v-\zeta)^{-1}$ small since the operator $K$ is compact, hence $[\zeta-L_{{\bf{n}}}]^{-1}$ is uniformly well defined. 
To have quantitative version of $|\nu+i {\bf{n}}\cdot v-\zeta|\gg 1$ everywhere except for a small set is easy, but tedious. Moreover a sophisticated version of this techniques will be used to prove Propositions \ref{prop:roughB} and \ref{prop:largeN} below, which are more involved. Hence we choose to skip the details here.
\item[(b)]
If $|\zeta|=\mathcal{O}(1)$ and $|{\bf{n}}|=\mathcal{O}(1),$ then since we require $\Psi$ in \eqref{eq:difCurve} to be sufficiently large, we have that $\zeta\in \Gamma_1({\bf{n}})$. 
Then the uniformity is implied by Lemma \ref{LM:subspace} and the spectrum of $L$.

\end{itemize}
Motivated by Cook's method, see \cite{RSIII}, we consider the identity \eqref{eq:IntContour} in the space $\langle v\rangle^{-m}L^1(\mathbb{R}^3)$, defined as
\begin{align}
\langle v\rangle^{-m}L^1:= \Big\{ f:\ \mathbb{R}^3\rightarrow \mathbb{C}\ \Big| \ \|\langle v\rangle^{m}f\|_{L^1}<\infty \Big\}.
\end{align}
By the wellposedness, to be proved in Section \ref{sec:Local} below, we have that for any time $t\geq 0,$ $e^{-tL_{{\bf{n}}}}g\in \langle v\rangle^{-m}L^1$ if $g\in \langle v\rangle^{-m}L^1.$

For the term on the right hand side of \eqref{eq:IntContour}, the following lemma provides an important estimate.
\begin{lemma}\label{LM:distanceContour} There exists a large constant $Y$ such that if $m\geq Y$,
and if the positive constant $\Psi$ in \eqref{eq:difCurve} is sufficiently large, then there exists a constant $C=C(m)$ independent of ${\bf{n}}$ and $\zeta\in \Gamma_{{\bf{n}}}$ such that, 
for any point $\zeta\in \Gamma_{{\bf{n}}}$ and ${\bf{n}}\in \mathbb{Z}^{3}\backslash\{(0,0,0)\},$ we have $$\|(L_{{\bf{n}}}-\zeta)^{-1}\|_{\langle v\rangle^{-m}L^{1}\rightarrow \langle v\rangle^{-m} L^{1}}\leq C.$$
\end{lemma}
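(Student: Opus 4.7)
My plan is to start from the factorization
\[
(L_{{\bf{n}}}-\zeta)^{-1} \;=\; [I+(\nu+i{\bf{n}}\cdot v-\zeta)^{-1}K]^{-1}\,(\nu+i{\bf{n}}\cdot v-\zeta)^{-1},
\]
which reduces the statement to controlling the two factors on the right uniformly on $\langle v\rangle^{-m}L^{1}$. The multiplication factor is handled directly by \eqref{eq:ReasonCurve2}, which in fact even gains one power of $\langle v\rangle$. The core of the proof is therefore to bound $[I+A_{{\bf{n}},\zeta}]^{-1}$ uniformly, where $A_{{\bf{n}},\zeta}:=(\nu+i{\bf{n}}\cdot v-\zeta)^{-1}K$. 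Using the explicit kernel of $K$ from \eqref{eq:difK}, the weighted estimate \eqref{eq:mK1}, and \eqref{eq:ReasonCurve2}, I would first verify that for $m$ larger than some fixed $\Phi$ the operator $A_{{\bf{n}},\zeta}$ is bounded and compact on $\langle v\rangle^{-m}L^{1}$, with operator norm depending continuously on $({\bf{n}},\zeta)$.

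I would then split the analysis into two regimes. In the regime $|{\bf{n}}|+|\zeta|\geq R$ for some large threshold $R$, the goal is to prove $\|A_{{\bf{n}},\zeta}\|_{\langle v\rangle^{-m}L^{1}\to\langle v\rangle^{-m}L^{1}}\leq 1/2$, so that a Neumann series yields $[I+A_{{\bf{n}},\zeta}]^{-1}$ of norm at most $2$. The mechanism for the smallness is that the kernel of $K$ carries Gaussian factors in $v$ coming from the Maxwellian, hence $Kf$ is sharply localized in $v$; on that localization region the multiplier $(\nu+i{\bf{n}}\cdot v-\zeta)^{-1}$ is of size $(|{\bf{n}}|+|\zeta|)^{-1}$ except on a thin tubular neighborhood of the resonance set $\{\nu(v)=\mathrm{Re}\,\zeta,\ {\bf{n}}\cdot v=\mathrm{Im}\,\zeta\}$, and on that neighborhood the bound \eqref{eq:ReasonCurve2} combined with the Gaussian weight in $Kf$ still yields a contribution of vanishing order as $R\to\infty$, provided $\Psi$ in \eqref{eq:difCurve} is chosen large enough.

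In the complementary regime $|{\bf{n}}|+|\zeta|\leq R$, the parameter set is compact, since ${\bf{n}}\in\mathbb{Z}^{3}$ contributes only finitely many values and $\Gamma_{{\bf{n}}}\cap\{|\zeta|\leq R\}$ is compact. Here I would invoke the Fredholm alternative for the compact operator $A_{{\bf{n}},\zeta}$: it suffices to rule out $-1$ as an eigenvalue. If $(I+A_{{\bf{n}},\zeta})f=0$ with $f\in\langle v\rangle^{-m}L^{1}$, then $L_{{\bf{n}}}f=\zeta f$, and hence $f=-(\nu+i{\bf{n}}\cdot v-\zeta)^{-1}Kf$; since $Kf$ inherits Gaussian decay in $v$ from the $e^{-|v|^{2}}$ and $e^{-|(u-v)\cdot v|^{2}/|u-v|^{2}}$ factors in \eqref{eq:difK}, a bootstrap through this identity promotes $f$ into $M^{1/2}L^{2}$. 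Because $\zeta\in\Gamma_{{\bf{n}}}$ lies in the resolvent set of $L_{{\bf{n}}}$ on $M^{1/2}L^{2}$ by \eqref{eq:ess}--\eqref{eq:gap}, this forces $f\equiv 0$. Pointwise bounded invertibility of $I+A_{{\bf{n}},\zeta}$ follows, and continuous dependence on $({\bf{n}},\zeta)$ together with compactness of the parameter set upgrades it to a uniform bound.

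The hardest step will be the large-parameter regime: the multiplier $(\nu+i{\bf{n}}\cdot v-\zeta)^{-1}$ is not uniformly small in $v$ along $\Gamma_{{\bf{n}}}$, so the required smallness of $A_{{\bf{n}},\zeta}$ must be extracted from a careful interplay between the geometric separation encoded by the choice of $\Psi$ in the contour \eqref{eq:difCurve} and the Gaussian localization built into the kernel of $K$; the constant $\Phi$ in the statement is forced by how many powers of $\langle v\rangle$ are consumed in carrying out this estimate.
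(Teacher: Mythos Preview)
Your overall architecture---factorize $(L_{\bf{n}}-\zeta)^{-1}$, bound the multiplication factor by \eqref{eq:ReasonCurve2}, and split into a large-parameter regime versus a compact regime---matches the paper's. But the mechanism you invoke for smallness, namely that ``$Kf$ is sharply localized in $v$'' because of Gaussian factors in the kernel, is not correct for the $K_2+K_3$ contribution, and this undermines both regimes.

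Look at the kernel in \eqref{eq:difK}: for $K_2+K_3$ it is $2\pi|u-v|^{-1}\exp\bigl(-|(u-v)\cdot v|^2/|u-v|^2\bigr)$. When $u-v\perp v$ the exponent vanishes, so there is no decay in $|v|$ along that set. If $f$ has only polynomial decay, then $(K_2+K_3)f$ also has only polynomial decay; it does not inherit a Gaussian weight. Consequently your Neumann-series bound $\|A_{{\bf{n}},\zeta}\|\leq 1/2$ cannot be obtained this way: the multiplier $(\nu+i{\bf{n}}\cdot v-\zeta)^{-1}$ is only $O(1)$ near the resonance set, and there is no Gaussian cutoff in $Kf$ to kill that contribution. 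Indeed the paper does \emph{not} claim the full operator norm is small; in the large-$|{\bf{n}}|$ case (Proposition~\ref{prop:largeN}, Lemma~\ref{LM:K123}) it decomposes $K_{\zeta,{\bf{n}}}$ into a $2\times 2$ block matrix via a cutoff adapted to the resonance set, shows three of the four entries are small, explicitly allows the remaining off-diagonal entry to be large of size $\Upsilon_m$, and then inverts the block matrix directly. The large-$|\zeta|$ case (Proposition~\ref{prop:roughB}) is treated separately by a different cutoff at scale $|v|\sim m$, again via a block-matrix argument; the two large-parameter cases are not unified.

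The same gap affects your compact-regime bootstrap. From $f=-(\nu+i{\bf{n}}\cdot v-\zeta)^{-1}Kf$ with $f\in\langle v\rangle^{-m}L^1$ you cannot conclude $Kf\in M^{1/2}L^2$, because $K$ only maps $\langle v\rangle^{-(m+1)}L^1\to\langle v\rangle^{-m}L^1$ (it \emph{loses} a power), and the multiplier regains exactly one power, so the iteration stalls at polynomial decay. The paper instead uses the Mouhot construction (Proposition~\ref{prop:compact}): split $K=K_\delta+(K-K_\delta)$ where $K_\delta$ carries an explicit cutoff $\Theta_\delta$ that forces compact support in $v$ (and hence maps $\langle v\rangle^{-m}L^1\to M^{1/2}L^2$ trivially), while $K-K_\delta$ is made small in the polynomial-weighted norm by choosing $m$ large and $\delta$ small. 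An explicit right inverse is then built using the known $M^{1/2}L^2$ resolvent. Your Fredholm-plus-continuity argument could in principle replace this, but only after you supply a correct passage from $\langle v\rangle^{-m}L^1$ to $M^{1/2}L^2$, and that passage is essentially the content of the Mouhot decomposition.
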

This lemma will be proven in section \ref{sec:distanceContour}.

Applying Lemma \ref{LM:distanceContour} to \eqref{eq:IntContour}, we obtain that, for $g\in \langle v\rangle^{-m}L^1\cap M^{\frac{1}{2}}L^2,$
$$
\|e^{-tL_{{\bf{n}}}}g\|_{\langle v\rangle^{-m}L^1}\lesssim  \int_{\zeta\in \Gamma_1({\bf{n}})\cup \Gamma_{2}({\bf{n}})\cup \Gamma_{3}({\bf{n}})} e^{-t Re\ \zeta} \ |d\zeta|\ \|g\|_{ \langle v\rangle^{-m}L^1}
$$ By the definition of $\Gamma_{1}({\bf{n}})$, it is easy to see that $$\int_{\zeta\in \Gamma_{1}} e^{-\Theta t} |d\zeta|\lesssim e^{-\Theta t} (|{\bf{n}}|+1).$$
Similarly, the definitions of $\Gamma_{2}({\bf{n}})$ and $\Gamma_{3}({\bf{n}})$ imply that for any $t\geq 1,$  $$\int_{\zeta\in \Gamma_{2}({\bf{n}})\cup \Gamma_{3}({\bf{n}})} e^{-t Re \zeta} \ |d\zeta| \lesssim (1+|{\bf{n}}|)\int_{\Theta}^{\infty} e^{-t\sigma} d\sigma\lesssim e^{-\Theta t} (1+|{\bf{n}}|).$$

Collecting the estimates above and using the fact that $\langle v\rangle^{-m}L^1\cap M^{\frac{1}{2}}L^2$ is dense in $\langle v\rangle^{-m}L^1$, we prove \eqref{eq:est000}, for $t\geq 1.$

The proof will be complete if we can show that the propagator $e^{-tL_{{\bf{n}}}}$ is bounded on $L^{1}(\mathbb{R}^3)$ when $t\in[0,1].$ To prove this, we establish the local wellposedness of the linear equation
\begin{align}
\begin{split}\label{eq:lin}
\partial_{t}g=&[-\nu-i{\bf{n}}\cdot v-K]g,\\
g(v,0)=&g_0(v),
\end{split}
\end{align} in Appendix \ref{sec:Local} below, which shows that, there exists a constant $C$, independent of $\bf{n}$, s.t. \eqref{eq:lin} has a unique solution 
in the time interval $[0,\ 1]$ and it satisfies the estimate $$\|\langle v\rangle^{m}g(\cdot, t)\|_{L^1}\leq C \|\langle v\rangle^{m}g_0\|_{L^1}.$$

This completes the proof of \eqref{eq:est000}.

The proof of \eqref{eq:estNot0} is almost identical, the only difference is that the operator $L_0$ has an isolated eigenvalue $0$. Hence we choose to skip the details here.

This completes the proof of Lemma \ref{LM:roughEst}.
\end{proof}

\section{Proof of Lemma \ref{LM:distanceContour}}\label{sec:distanceContour}
As stated in Lemma \ref{LM:distanceContour}, we need $m$ sufficiently large to make certain constants sufficiently small. 
In the rest of the paper, we keep track all the constants related to $m.$ For the purpose of notation, in what follows we use $a\lesssim b$ to signify that
\begin{align}
a\leq Cb\label{eq:CInM}
\end{align} with $C$ being a fixed constant, independent of $m.$

We start by simplifying the arguments in Lemma \ref{LM:distanceContour}. Using the definitions of the operators $L_{{\bf{n}}}$, ${\bf{n}}\in \mathbb{Z}^3$, in \eqref{eq:observation}, $K$ in \eqref{eq:difK}, and $\nu$ in \eqref{eq:difNu} we find that $$L_{{\bf{n}}}=\nu+K+i{\bf{n}}\cdot v .$$
In order to prove the uniform invertibility of $L_{{\bf{n}}}-\zeta,\ \zeta\in \Gamma_{{\bf{n}}},$ we claim that it suffices to prove this property for $1-K_{\zeta,{\bf{n}}}$ with $K_{\zeta,{\bf{n}}}$ defined by
\begin{equation}\label{eq:difKsn}
K_{\zeta,{\bf{n}}}:=K(\nu+i{\bf{n}} \cdot v -\zeta)^{-1}.
\end{equation} To see that, rewrite $L_{{\bf{n}}}-\zeta$ as
\begin{align}\label{eq:rewriteLN}
L_{{\bf{n}}}-\zeta=[1+K_{\zeta,{\bf{n}}}](\nu+i{\bf{n}}\cdot v-\zeta).
\end{align} 
For the multiplication operator $\nu+i{\bf{n}}\cdot v-\zeta,$ if the constant $\Psi$ in the definition of the curves $\Gamma_{k,{\bf{n}}}, \ k=0,1,2,$ in \eqref{eq:difCurve}, are sufficiently large, then there exists a constant $C$ such that for any $\zeta\in\Gamma_{{\bf{n}}}$
\begin{equation}\label{eq:ReasonCurve}
|\nu+i {\bf{n}}\cdot v-\zeta|^{-1} \leq C(1+|v|+|{\bf{n}}\cdot v|)^{-1}.
\end{equation}
It is straightforward, but a little tedious to verify this. Details are omitted.

Now we study the linear operator $1+K_{\zeta,{\bf{n}}},$ the result is the following: recall the definition of space 
\begin{align}
\langle v\rangle^{-m}L^1:= \Big\{ f:\ \mathbb{R}^3\rightarrow \mathbb{C}\ \Big| \ \|\langle v\rangle^{m}f\|_{L^1}<\infty \Big\}.
\end{align}
\begin{lemma}\label{LM:compactLm}
Suppose that $m>0$ is sufficiently large. Then
for any point $\zeta\in \Gamma_{{\bf{n}}}$ and ${\bf{n}}\in \mathbb{Z}^{3}\backslash\{(0,0,0)\},$ we have that $1+K_{\zeta,{\bf{n}}}:\ \langle v\rangle^{-m}L^{1}\rightarrow \langle v\rangle^{-m}L^{1}$ is invertible; its inverse satisfies the estimate 
\begin{align}
\|(1+K_{\zeta,{\bf{n}}})^{-1}\|_{\langle v\rangle^{-m}L^{1}\rightarrow \langle v\rangle^{-m}L^{1}}\leq C(m),
\end{align} where the constant $C(m)$ is independent of ${\bf{n}}$ and $\zeta$.
\end{lemma}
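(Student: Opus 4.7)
The plan is to prove uniform invertibility of $1+K_{\zeta,\bf{n}}$ by splitting the parameter range into two regimes: a ``large parameter'' regime where $|\zeta|+|\bf{n}|$ exceeds some threshold $R=R(m)$, handled by a Neumann series, and a ``bounded parameter'' regime handled by Fredholm theory combined with the spectral information on $M^{\frac{1}{2}}L^2$ encoded in Lemma~\ref{LM:subspace} and \eqref{eq:ess}--\eqref{eq:gap}.

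For the large-parameter regime, I would show $\|K_{\zeta,\bf{n}}\|_{\langle v\rangle^{-m}L^1\to\langle v\rangle^{-m}L^1}<\tfrac{1}{2}$ once $|\zeta|+|\bf{n}|\geq R$. Combining the pointwise bound \eqref{eq:ReasonCurve}, $|\nu+i\bf{n}\cdot v-\zeta|^{-1}\lesssim (1+|v|+|\bf{n}\cdot v|)^{-1}$, with the weighted estimate \eqref{eq:mK1}, $\|\langle v\rangle^{m}Kh\|_{L^1}\lesssim\|\langle v\rangle^{m+1}h\|_{L^1}$, applied to $h=(\nu+i\bf{n}\cdot v-\zeta)^{-1}f$, gives
\begin{equation*}
\|\langle v\rangle^{m}K_{\zeta,\bf{n}}f\|_{L^1}\lesssim\|\langle v\rangle^{m+1}(1+|v|+|\bf{n}\cdot v|)^{-1}f\|_{L^1}.
\end{equation*}
Splitting the $v$-integral into the ``bad'' set where $1+|v|+|\bf{n}\cdot v|$ is bounded (whose measure, cut by the weight $\langle v\rangle^{-m-1}$ for $m$ large, shrinks in $|\bf{n}|$ because the constraint $|\bf{n}\cdot v|\lesssim 1$ pins the $v$-component along $\bf{n}$ into an interval of width $\sim |\bf{n}|^{-1}$) and its complement (where the multiplier is already small), one gains operator-norm smallness in $|\bf{n}|$. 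The case of large $|\zeta|$ is easier since it directly forces $|\nu+i\bf{n}\cdot v-\zeta|^{-1}$ to be small throughout $\mathbb{R}^3$. The Neumann series $(1+K_{\zeta,\bf{n}})^{-1}=\sum_{k\geq 0}(-K_{\zeta,\bf{n}})^k$ then converges uniformly with the claimed bound.

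For the bounded-parameter regime $|\zeta|+|\bf{n}|\leq R$, only finitely many $\bf{n}$'s and a bounded arc of each $\Gamma_1(\bf{n})$ are relevant. I would first show $K_{\zeta,\bf{n}}$ is compact on $\langle v\rangle^{-m}L^1$, by approximating $K(v,u)(\nu(u)+i\bf{n}\cdot u-\zeta)^{-1}$ in operator norm by kernels compactly supported and continuous in $(v,u)$, using the explicit Gaussian/polynomial structure of $K(v,u)$ from \eqref{eq:difK}. The Fredholm alternative then reduces uniform invertibility of $1+K_{\zeta,\bf{n}}$ to triviality of its kernel, together with joint continuity in $(\zeta,\bf{n})$. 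Suppose $(1+K_{\zeta,\bf{n}})f=0$ for some $0\neq f\in\langle v\rangle^{-m}L^1$. Then $\varphi:=(\nu+i\bf{n}\cdot v-\zeta)^{-1}(-Kf)$ satisfies $(L_{\bf{n}}-\zeta)\varphi=0$, which, combined with Lemma~\ref{LM:subspace} and \eqref{eq:gap}, will contradict the fact that $\Gamma_{\bf{n}}$ lies strictly outside $\sigma(L_{\bf{n}})$ as an operator on $M^{\frac{1}{2}}L^2$.

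The main obstacle is the regularity bootstrap promoting $\varphi$ from $\langle v\rangle^{-m}L^1$ into $M^{\frac{1}{2}}L^2$, which is what allows the above $M^{\frac{1}{2}}L^2$ spectral picture to be invoked. I would iterate $f=(-K_{\zeta,\bf{n}})^{N}f$ and exploit the explicit Gaussian factors $\pi e^{-|v|^2}$ and $e^{-|(u-v)\cdot v|^2/|u-v|^2}$ appearing in the kernel of $K$ in \eqref{eq:difK}: each application of $K$ convolves against a kernel carrying genuine Gaussian decay in the output variable, so after finitely many iterations the polynomial $\langle v\rangle^{-m}L^1$ control upgrades to Gaussian-weighted $M^{\frac{1}{2}}L^2$ control. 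The hypothesis that $m$ is sufficiently large is used to absorb the power of $\langle v\rangle$ lost at each iteration step via \eqref{eq:mK1}, and compactness of the restricted parameter set delivers the uniform constant $C(m)$ independent of $\zeta$ and $\bf{n}$ asserted in the lemma.
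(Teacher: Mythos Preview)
Your overall two-regime split (large versus bounded $|\zeta|+|{\bf n}|$) mirrors the paper's strategy, and your treatment of the bounded regime via Fredholm theory plus a regularity bootstrap to $M^{1/2}L^2$ is a reasonable alternative to the paper's use of Mouhot's explicit factorization (Proposition~\ref{prop:compact}). The bootstrap step is delicate---the kernel of $K_2+K_3$ in \eqref{eq:difK} carries Gaussian decay only in the direction of $u-v$, not full radial Gaussian decay in $v$---but with the standard Grad estimates this can be made to work.

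The genuine gap is in your large-$|{\bf n}|$ argument. After invoking \eqref{eq:mK1} you have reduced the question to the multiplier bound
\[
\bigl\|\langle u\rangle^{m+1}(1+|u|+|{\bf n}\cdot u|)^{-1}f\bigr\|_{L^1}\le \tfrac12\|\langle u\rangle^{m}f\|_{L^1},
\]
and you claim the smallness of the ``bad'' slab $\{|{\bf n}\cdot u|\lesssim 1\}$ in measure yields this. It does not: the operator norm of a multiplication operator on $L^{1}$ is the $L^{\infty}$ norm of the multiplier, and on the slab one has $\langle u\rangle(1+|u|+|{\bf n}\cdot u|)^{-1}\sim 1$ pointwise, independently of how thin the slab is. An adversary can place all the mass of $f$ there. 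In fact the paper shows explicitly (Lemma~\ref{LM:K123}, estimate \eqref{eq:KLarge}) that the block $(1-\chi)K_{\zeta,{\bf n}}\chi$ carries norm $\lesssim\Upsilon_{m}$, and $\Upsilon_{m}\to\infty$ as $m\to\infty$; so $\|K_{\zeta,{\bf n}}\|$ need not be below $1$ and a Neumann series is unavailable. What rescues the situation is that the \emph{other} off-diagonal block $\chi K_{\zeta,{\bf n}}(1-\chi)$ is small (estimate \eqref{eq:estK3}), and the diagonal block $\chi K_{\zeta,{\bf n}}\chi$ is small precisely because the integral kernel of $K$ interacts with the thinness of the slab (estimate \eqref{eq:chi11}); this ``almost triangular'' $2\times 2$ structure is what gives invertibility with a bound $\lesssim\Upsilon_{m}$ rather than $\le 2$. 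To fix your argument you must retain the kernel structure of $K$ rather than immediately collapsing it via \eqref{eq:mK1}.
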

This will be proven after presenting the key ideas.

The results above complete the proof of Lemma \ref{LM:distanceContour}, assuming that Lemma \ref{LM:compactLm} holds.


Next we prove Lemma \ref{LM:compactLm}. Here an obvious difficulty is that the set $\{({\bf{n}},\zeta)\ |\ {\bf{n}}\in \mathbb{Z}^3,\ \zeta\in \Gamma_{\bf{n}}\}$ is not compact, this makes it hard to find an uniform bound.
To overcome the difficulty we divide the set into three subsets and apply different techniques:
specifically, for some large constants $N$ and $X$,
\begin{itemize}
\item[(1)] $|{\bf{n}}|> N$, 
\item[(2)] $|{\bf{n}}|\leq N$, and $|\zeta|> X$, 
\item[(3)] $|{\bf{n}}|\leq N$, and $|\zeta|\leq X$.
\end{itemize}

Next we look for the constants $N$ and $X$ by Propositions \ref{prop:roughB} and \ref{prop:largeN} below. 
For the rest, i.e. in the compact regime (3), we apply Proposition \ref{prop:compact}.

Recall the constant $\Upsilon_{m}:=\Upsilon_{m,\frac{1}{2}} $ defined in \eqref{eq:mK1}, and that $\Upsilon_m\rightarrow \infty$ as $m\rightarrow \infty.$
\begin{proposition}\label{prop:roughB}
There exists a constant $Y>0$, such that if $m\geq Y$, and if $\zeta\in \Gamma_{\bf{n}}$ is large enough to satisfy
\begin{align}
|\zeta|\geq (1+|{\bf{n}}|^2)[\Upsilon_m+m]^4,
\end{align} then we have
\begin{align}
 \|\langle v\rangle^{m} (1
 +K_{\zeta,{\bf{n}}})^{-1}\langle v\rangle^{-m}\|_{L^1\rightarrow L^1}\leq 2 .
\end{align} 

\end{proposition}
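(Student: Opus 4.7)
The plan is to invert $1+K_{\zeta,{\bf n}}$ via a Neumann series on the conjugated operator $A:=\langle v\rangle^{m}K_{\zeta,{\bf n}}\langle v\rangle^{-m}$. Since $\langle v\rangle^{m}(1+K_{\zeta,{\bf n}})^{-1}\langle v\rangle^{-m}=(1+A)^{-1}$, the desired bound of $2$ on the inverse follows from the single estimate $\|A\|_{L^{1}(\mathbb{R}^{3})\to L^{1}(\mathbb{R}^{3})}\leq 1/2$, in which case $(1+A)^{-1}=\sum_{k\geq 0}(-A)^{k}$ converges with norm at most $2$.

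I would next apply Schur's test to the kernel $\langle v\rangle^{m}K(v,u)\langle u\rangle^{-m}/(\nu(u)+i{\bf n}\cdot u-\zeta)$ of $A$ and use the weighted kernel bound from Lemma \ref{LM:EstNonline} to obtain
\[
\|A\|_{L^{1}\to L^{1}}\leq \sup_{u\in\mathbb{R}^{3}}\frac{1}{\langle u\rangle^{m}|\nu(u)+i{\bf n}\cdot u-\zeta|}\int\langle v\rangle^{m}|K(v,u)|\,dv\leq \Upsilon_{m}\,\sup_{u\in\mathbb{R}^{3}}\frac{\langle u\rangle}{|\nu(u)+i{\bf n}\cdot u-\zeta|}.
\]
The task therefore becomes the pointwise resolvent estimate $\sup_{u}\langle u\rangle/|\nu(u)+i{\bf n}\cdot u-\zeta|\leq 1/(2\Upsilon_{m})$ under the hypothesis $|\zeta|\geq (1+|{\bf n}|^{2})(\Upsilon_{m}+m)^{4}$, provided $\Psi$ is chosen large enough.

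For this, I would argue by cases along $\Gamma_{{\bf n}}$. On $\Gamma_{1}({\bf n})$ the parametrization gives $|\zeta|\lesssim \Psi(1+|{\bf n}|)$, which is incompatible with $|\zeta|\geq (1+|{\bf n}|^{2})(\Upsilon_{m}+m)^{4}$ for $m$ sufficiently large, so this portion is vacuous. On $\Gamma_{2}({\bf n})\cup\Gamma_{3}({\bf n})$, writing $\mathrm{Re}\,\zeta=\Theta+\beta$ and $|\mathrm{Im}\,\zeta|=\Psi(1+|{\bf n}|)(1+\beta)$ with $\beta\geq 0$, the hypothesis forces $\beta\gtrsim |{\bf n}|(\Upsilon_{m}+m)^{4}/\Psi$. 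I split $u$ according to whether $\nu(u)\leq \tfrac{1}{2}\mathrm{Re}\,\zeta$ or not, using the lower bound $\nu(u)\geq C(1+|u|)$ from \eqref{eq:globalLower}. In the inner region, the bound $|u|\lesssim \beta$ combines with $|\mathrm{Im}\,\zeta|\gtrsim \Psi|{\bf n}|\beta$ (for $\Psi$ large) to give $|\mathrm{Im}\,\zeta-{\bf n}\cdot u|\gtrsim |\mathrm{Im}\,\zeta|$, so the ratio is bounded by $\langle u\rangle/|\mathrm{Im}\,\zeta|\lesssim 1/(\Psi(1+|{\bf n}|))$. In the outer region $|\nu(u)-\mathrm{Re}\,\zeta|\gtrsim \nu(u)\gtrsim \langle u\rangle$ holds, and the gain to $1/(2\Upsilon_{m})$ is extracted by choosing the inner/outer boundary at a scale proportional to $\Psi(1+\beta)/\Upsilon_{m}$ and exploiting the lower bound on $\beta$ to absorb $\Upsilon_{m}$ into $(\Upsilon_{m}+m)^{4}$.

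The main technical obstacle is this outer-region estimate: the naive real-part inequality $|\nu(u)-\mathrm{Re}\,\zeta|\gtrsim \langle u\rangle$ alone yields only a fixed constant for the ratio, so the argument must delicately balance the region boundary, the parameter $\Psi$, and the precise power of $(\Upsilon_{m}+m)$ in the hypothesis; the exponent $4$ is dictated by this balancing. Should the direct pointwise estimate prove too weak, a fallback is to split $K=K_{1}-(K_{2}+K_{3})$ and treat $K_{1}$ separately, exploiting the Gaussian factor $e^{-|v|^{2}}$ in $K_{1}(v,u)=\pi e^{-|v|^{2}}|u-v|$ (see \eqref{eq:difK}) which yields a much smaller, $m$-independent weighted kernel bound and therefore a contribution of order $1/|\zeta|$ to $\|A\|_{L^{1}\to L^{1}}$.
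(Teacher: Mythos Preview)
Your reduction to the Neumann series and the Schur bound
\[
\|A\|_{L^{1}\to L^{1}}\leq \Upsilon_{m}\,\sup_{u}\frac{\langle u\rangle}{|\nu(u)+i{\bf n}\cdot u-\zeta|}
\]
is correct, but the next step fails for a structural reason: the supremum on the right is bounded \emph{below} by a positive constant independent of $\zeta$, ${\bf n}$, and $m$. Indeed, send $|u|\to\infty$. Then $\nu(u)\sim C_{T}|u|$ dominates both $|{\bf n}\cdot u|\leq |{\bf n}||u|$ (irrelevant here) and any fixed $\zeta$, so the ratio tends to $1/C_{T}>0$. More concretely, on $\Gamma_{2}({\bf n})$ you may choose $u$ with $|u|$ so large that $\nu(u)>2\,\mathrm{Re}\,\zeta$; then $|\nu(u)-\mathrm{Re}\,\zeta|\leq \nu(u)$ and the imaginary part gives at best $|{\bf n}\cdot u-\mathrm{Im}\,\zeta|\lesssim |{\bf n}||u|+\Psi(1+|{\bf n}|)\beta\lesssim |{\bf n}||u|$, so the ratio is $\gtrsim 1/(C_{T}+|{\bf n}|)$---still not small in $m$, and certainly never $\leq 1/(2\Upsilon_{m})$ since $\Upsilon_{m}\to\infty$. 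You acknowledge this obstacle (``the naive real-part inequality \dots\ yields only a fixed constant''), but no amount of balancing of regions or of the exponent $4$ can overcome a hard lower bound. Splitting off $K_{1}$ does not help either, since the same issue persists for $K_{2}+K_{3}$.

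The missing idea is that the Schur bound you wrote is badly loose for large $|u|$: the kernel estimate $\int\langle v\rangle^{m}|K(v,u)|\,dv\leq \Upsilon_{m}\langle u\rangle^{m+1}$ from Lemma~\ref{LM:EstNonline} is \emph{not} sharp there. The paper's Lemma~\ref{LM:Control} shows that the restricted kernel norm
\[
\delta_{m,0}:=\sum_{l}\bigl\|\chi_{>m}\langle v\rangle^{m}K_{l}\langle v\rangle^{-m-1}\chi_{>m}\bigr\|_{L^{1}\to L^{1}}
\]
tends to $0$ as $m\to\infty$. This is the decisive input. The paper then splits according to $|u|\leq 2m$ versus $|u|>2m$ (packaged as a $2\times2$ block decomposition): for $|u|\leq 2m$ one has $|\nu(u)+i{\bf n}\cdot u-\zeta|\geq\tfrac12|\zeta|$ and the kernel factor costs $\Upsilon_{m}$, giving a contribution $\lesssim \Upsilon_{m}/|\zeta|\leq |\zeta|^{-1/2}$; for $|u|>2m$ the resolvent factor gives only the uniform bound $\langle u\rangle^{-1}$, but the kernel factor is now $\lesssim\delta_{m,0}+2^{-m}$. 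Both pieces are small, hence $\|A\|\leq\tfrac12$. Your argument can be repaired along exactly these lines, but not without invoking (or reproving) the $\delta_{m,0}\to0$ estimate.
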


The proposition will be proved in subsection \ref{subsec:roughB}.

Here we present some basic ideas. 
By the fact $K_{\zeta,\bf{n}}=(K_1-K_2-K_3)\ (\nu+i{\bf{n}}\cdot v-\zeta)^{-1},$ we have that in the region $|v|\leq |\zeta|^{\frac{1}{2}}$, 
\begin{align}
 |(\nu+i{\bf{n}}\cdot v-\zeta)^{-1}|\leq \langle v\rangle^{-1} |\zeta|^{-\frac{1}{2}}.
\end{align} The smallness is rendered by that $|\zeta|\gg 1.$
For the region $|v|>|\zeta|^{\frac{1}{2}}\gg 1$ the integral kernel $K_{\zeta,{\bf{n}}}$, which is localized in some sense, makes the contribution of this part to be small. Recall that in certain weighted $L^2$ space, $K_{l},\ l=1,2,3,$ are compact.

Next, we state the second result. 

\begin{proposition}\label{prop:largeN}
There exist a large constant $Y>0$ and a small one $\epsilon>0$, such that if $m\geq Y$, 
and if $|\bf{n}|$ is large enough to make 
\begin{align}
\Upsilon_{m}^2|{\bf{n}}|^{-\frac{1}{5}}+C(m)|{\bf{n}}|^{-\frac{1}{10}}\leq \epsilon
\end{align} for some fixed constant $C(m),$ 
then for any $\zeta\in \Gamma_{\bf{n}}$,
\begin{align}
\|\langle v\rangle^{m}(1+ K_{\zeta,{\bf{n}}})^{-1}\langle v\rangle^{-m}\|_{L^{1}\rightarrow L^1}\lesssim \Upsilon_m .\label{eq:largeN}
\end{align}
\end{proposition}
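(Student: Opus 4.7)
The plan is to invert $1+K_{\zeta,{\bf n}}$ via the algebraic identity
\begin{equation*}
(1+K_{\zeta,{\bf n}})(1-K_{\zeta,{\bf n}})=1-K_{\zeta,{\bf n}}^{2},
\end{equation*}
so that $(1+K_{\zeta,{\bf n}})^{-1}=(1-K_{\zeta,{\bf n}})(1-K_{\zeta,{\bf n}}^{2})^{-1}$ once the right factor is invertible. The factor $1-K_{\zeta,{\bf n}}$ is bounded on $\langle v\rangle^{-m}L^{1}$ with norm $\lesssim 1+\Upsilon_{m}$: indeed, Lemma \ref{LM:EstNonline} gives $\|Kf\|_{\langle v\rangle^{-m}L^{1}}\leq \Upsilon_{m}\|f\|_{\langle v\rangle^{-(m+1)}L^{1}}$, and the pointwise resolvent bound \eqref{eq:ReasonCurve} supplies the factor $\langle v\rangle^{-1}$ that exactly compensates this weight loss. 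Hence it suffices to prove
\begin{equation*}
\|K_{\zeta,{\bf n}}^{2}\|_{\langle v\rangle^{-m}L^{1}\to \langle v\rangle^{-m}L^{1}}\lesssim \Upsilon_{m}^{2}|{\bf n}|^{-1/5}+C(m)|{\bf n}|^{-1/10},
\end{equation*}
since under the smallness hypothesis this implies $(1-K_{\zeta,{\bf n}}^{2})^{-1}$ is bounded (by $2$, say) by a Neumann series, and then the product estimate yields the desired $\lesssim \Upsilon_{m}$.

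To bound $K_{\zeta,{\bf n}}^{2}=K\bigl[(\nu+i{\bf n}\cdot v-\zeta)^{-1}K\bigr](\nu+i{\bf n}\cdot v-\zeta)^{-1}$, I would isolate the sandwiched piece $K(\nu+i{\bf n}\cdot v-\zeta)^{-1}K$. On the portion of $\Gamma_{{\bf n}}$ where $\mathrm{Re}\,\zeta<\Lambda$ — which covers $\Gamma_{1}({\bf n})$ and the bounded part of $\Gamma_{2,3}({\bf n})$ — the Laplace representation
\begin{equation*}
(\nu+i{\bf n}\cdot v-\zeta)^{-1}=\int_{0}^{\infty}e^{-s(\nu+i{\bf n}\cdot v-\zeta)}\,ds
\end{equation*}
is valid pointwise in $v$, so that
\begin{equation*}
K(\nu+i{\bf n}\cdot v-\zeta)^{-1}K=\int_{0}^{\infty}e^{s\zeta}\, K e^{-s(\nu+i{\bf n}\cdot v)}K\,ds.
\end{equation*}
Lemma \ref{LM:oscillate} bounds the integrand in $\langle v\rangle^{-m}L^{1}$ operator norm by $C_{1}(1+|{\bf n}|s)^{-1}e^{-\Lambda s}\Upsilon_{m}$ (absorbing the weight loss by taking $m$ sufficiently large), and splitting the $s$-integral at $s\sim|{\bf n}|^{-\theta}$ for a suitable $\theta\in(0,1)$ produces a positive power of $|{\bf n}|^{-1}$. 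After composition with the outer $(\nu+i{\bf n}\cdot v-\zeta)^{-1}$, one obtains the $\Upsilon_{m}^{2}|{\bf n}|^{-1/5}$ contribution.

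For the remaining portion of $\Gamma_{{\bf n}}$ with $\mathrm{Re}\,\zeta\geq \Lambda$, the Laplace representation no longer converges. Since Proposition \ref{prop:roughB} already covers $|\zeta|\geq (1+|{\bf n}|^{2})[\Upsilon_{m}+m]^{4}$, only a region with $|\zeta|$ polynomially bounded in $|{\bf n}|$ remains. Here I would partition $v$-space into a non-resonant set $\{|{\bf n}\cdot v-\mathrm{Im}\,\zeta|\geq |{\bf n}|^{\alpha}\}$ — where $|(\nu+i{\bf n}\cdot v-\zeta)^{-1}|\lesssim |{\bf n}|^{-\alpha}$ gives direct smallness for the associated piece of $K_{\zeta,{\bf n}}$ — and a resonant slab of ${\bf n}$-thickness $|{\bf n}|^{\alpha-1}$. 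On the resonant slab, integration by parts in the direction of ${\bf n}$ inside the kernel of $K(\nu+i{\bf n}\cdot v-\zeta)^{-1}K$ — following the spirit of Subsection \ref{subsec:oscillate} but applied to the resolvent in place of an exponential — yields an additional factor of order $|{\bf n}|^{-1}$; optimizing $\alpha$ against this gain produces the $C(m)|{\bf n}|^{-1/10}$ term, with the weight-tracking through the double kernel $K(v,w)K(w,u)$ absorbed into $C(m)$. The main obstacle is exactly this last step: derivatives of $(\nu+i{\bf n}\cdot w-\zeta)^{-1}$ in $w$ generate factors of order $|{\bf n}|$ that must be canceled against the $1/(i{\bf n}\cdot w+\ldots)$ denominator produced by integration by parts, all while keeping the weights $\langle v\rangle^{m}$ and $\langle u\rangle^{m+O(1)}$ under control — this is what forces the non-sharp exponent $1/10$ and the appearance of the constant $C(m)$ in the final estimate.
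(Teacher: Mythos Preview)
Your factorization idea $(1+K_{\zeta,{\bf n}})^{-1}=(1-K_{\zeta,{\bf n}})(1-K_{\zeta,{\bf n}}^{2})^{-1}$ is a genuinely different route from the paper's, but it runs into a concrete obstruction that you glossed over with the phrase ``absorbing the weight loss by taking $m$ sufficiently large.'' Lemma~\ref{LM:oscillate} maps $\langle v\rangle^{-(m+3)}L^{1}\to\langle v\rangle^{-m}L^{1}$; after integrating the Laplace formula and composing with the outer $(\nu+i{\bf n}\cdot v-\zeta)^{-1}$, you obtain $K_{\zeta,{\bf n}}^{2}:\langle v\rangle^{-(m+2)}L^{1}\to\langle v\rangle^{-m}L^{1}$ with small norm --- but \emph{not} $\langle v\rangle^{-m}L^{1}\to\langle v\rangle^{-m}L^{1}$. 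The Neumann series for $(1-K_{\zeta,{\bf n}}^{2})^{-1}$ then drifts through an infinite ladder of weight spaces and does not close on any fixed $\langle v\rangle^{-m}L^{1}$. Making $m$ large does nothing: the three-weight loss is structural, coming from the derivative of the kernel $K(v,z)$ in the integration by parts, and is independent of $m$. Your treatment of the region $\mathrm{Re}\,\zeta\geq\Lambda$ is separately incomplete (as you acknowledge), and the resonant/non-resonant partition you sketch there is in fact close to what the paper does throughout.

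The paper avoids the weight problem by never trying to make $K_{\zeta,{\bf n}}$ or any power of it globally small. Instead it introduces a sharp cutoff $\chi$ onto the thin resonant slab $\{|v|\leq|{\bf n}|^{-1/4}\}\cup\{|{\bf n}\cdot v-\mathrm{Im}\,\zeta|\leq|{\bf n}|^{3/4}|v|\}$ and writes $K_{\zeta,{\bf n}}$ as a $2\times 2$ block matrix with respect to $\chi,\,1-\chi$. Three of the four blocks are shown to be small --- the $(1-\chi)$ columns because the resolvent is pointwise $\lesssim|{\bf n}|^{-1/4}\langle v\rangle^{-1}$ off the slab, and the $(\chi,\chi)$ block because the slab has small volume. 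Only the entry $(1-\chi)K_{\zeta,{\bf n}}\chi$ is potentially of size $\Upsilon_{m}$. Since it is a single \emph{off-diagonal} entry, and its product with the opposite off-diagonal is $\lesssim\Upsilon_{m}^{2}|{\bf n}|^{-1/4}$, the block matrix $1+F$ is invertible with inverse bounded by $\lesssim\Upsilon_{m}$ (this is the elementary $2\times 2$ fact that $\|f_{12}\|\|f_{21}\|\ll 1$ suffices even when $\|f_{21}\|$ is large). This argument works uniformly over all of $\Gamma_{{\bf n}}$, needs no Laplace representation, and never leaves $\langle v\rangle^{-m}L^{1}$.
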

The proposition will be proved in subsection \ref{sub:LargeN}. 
The basic ideas in proving Proposition \ref{prop:largeN} are easy. Recall that by definition $$K_{\zeta,\bf{n}}=(K_1-K_2-K_3)\ (\nu+i\bf{n}\cdot v -\zeta)^{-1}.$$ 
When $|\bf{n}|$ is large, the purely imaginary part of $\nu+i\bf{n}\cdot v-\zeta$, which is ${\bf{n}}\cdot v-Im\zeta$, is large except for a ``small" set, for example a neighborhood of the sets $v\perp {\bf{n}}$ and $v=0$. This will render $K_{\zeta,\bf{n}}$ small except for a small set. For the small set, the integral kernels of $K_{l}$, which is bounded and continuous, make the contribution small.

The following result is an estimate for each fixed $({\bf{n}},\zeta)$ in the set $\{({\bf{n}},\zeta)\ |\ {\bf{n}}\in \mathbb{Z}^3,\ \zeta\in \Gamma_{\bf{n}}\}$. Recall the constant $\Lambda$ from \eqref{eq:gap},
\begin{proposition}\label{prop:compact}
There exists a constant $Y$ such that if $m\geq Y$, then for each fixed ${\bf{n}}\in \mathbb{Z}^3$ and $\zeta\in \Gamma_{\bf{n}}$, there exists some constant $C_{\bf{n},\zeta}>0$ such that 
\begin{align}
\|\langle v\rangle^{m}(1+K_{\zeta,{\bf{n}}})^{-1}\ g\|_{L^{1}}\leq C_{\bf{n},\zeta}\|\langle v\rangle^{m}g\|_{L^{1}}.\label{eq:invert}
\end{align} 
\end{proposition}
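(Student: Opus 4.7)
The plan is to reduce invertibility of $1+K_{\zeta,\mathbf{n}}$ to injectivity via a Fredholm alternative argument, and then rule out a nontrivial kernel by transferring the problem to the Gaussian-weighted space $M^{\frac{1}{2}}L^{2}$, where the spectrum of $L_{\mathbf{n}}$ is already understood.

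\textbf{Step 1 (compactness of $K_{\zeta,\mathbf{n}}$).} Because $\mathrm{Re}\,\zeta\leq\frac{1}{2}\Lambda$ while $\nu(v)\geq\Lambda$ by \eqref{eq:lowB}, the multiplier $(\nu+i\mathbf{n}\cdot v-\zeta)^{-1}$ is uniformly bounded; moreover by \eqref{eq:globalLower} it decays like $\langle v\rangle^{-1}$. Combined with \eqref{eq:mK1}, this shows $K_{\zeta,\mathbf{n}}$ is bounded on $\langle v\rangle^{-m}L^{1}(\mathbb{R}^{3})$. To upgrade to compactness I would approximate the integral kernel $K(v,u)$ from \eqref{eq:difK} by finite-rank kernels obtained by truncating to $\{|v|,|u|\leq R\}$ and mollifying the singularity of $|u-v|^{-1}$, then use \eqref{eq:mK1} together with the Gaussian prefactor in $K_{1}$ and the $\langle v\rangle^{-1}$-gain from $(\nu+i\mathbf{n}\cdot v-\zeta)^{-1}$ to drive the truncation remainders to zero in the operator norm on $\langle v\rangle^{-m}L^{1}$ as $R\to\infty$, provided $m$ is taken sufficiently large.

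\textbf{Step 2 (Fredholm alternative).} Compactness of $K_{\zeta,\mathbf{n}}$ implies $1+K_{\zeta,\mathbf{n}}$ has index zero and closed range, so it is invertible iff it is injective; in that case the open mapping theorem produces a bound $C_{\mathbf{n},\zeta}$ for the inverse, which is precisely \eqref{eq:invert}.

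\textbf{Step 3 (injectivity via bootstrap).} Suppose $f\in\langle v\rangle^{-m}L^{1}$ is nonzero and satisfies $(1+K_{\zeta,\mathbf{n}})f=0$, and set $h:=(\nu+i\mathbf{n}\cdot v-\zeta)^{-1}f$. Then $(L_{\mathbf{n}}-\zeta)h=0$ in the distributional sense, and $h$ still lies in a polynomially weighted $L^{1}$ space. Rewriting the identity as $h=-(\nu+i\mathbf{n}\cdot v-\zeta)^{-1}Kh$ and using the explicit form \eqref{eq:difK}, namely the $e^{-|v|^{2}}$ prefactor in $K_{1}$ and the Gaussian factor $e^{-|(u-v)\cdot v|^{2}/|u-v|^{2}}$ in $K_{2}+K_{3}$ (splitting the $u$-integration into $|u|\leq|v|/2$, $|u|\geq 2|v|$, and $|u|\sim|v|$), one shows that $Kh$ inherits Gaussian-type decay in $v$; iterating the relation a finite number of times then transfers this decay to $h$ itself, yielding $h\in M^{\frac{1}{2}}L^{2}(\mathbb{R}^{3})$.

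\textbf{Step 4 (spectral contradiction).} By Lemma \ref{LM:subspace}, $e^{i\mathbf{n}\cdot x}h$ is then an $M^{\frac{1}{2}}L^{2}(\mathbb{R}^{3}\times\mathbb{T}^{3})$-eigenvector of $L$ with eigenvalue $\zeta$. But $\mathrm{Re}\,\zeta\leq\frac{1}{2}\Lambda<\Lambda$, together with \eqref{eq:ess}--\eqref{eq:gap}, forces $h=0$: for $\mathbf{n}\neq 0$ no eigenvalue lies in this half-plane, and for $\mathbf{n}=0$ the only possibility would be $\zeta=0$, which is excluded because $\zeta\in\Gamma_{\mathbf{0}}$ and the contour avoids the origin (see \eqref{eq:ess0} and the definition \eqref{eq:difCurve}). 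Hence $f=0$, so $1+K_{\zeta,\mathbf{n}}$ is injective and therefore invertible.

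\textbf{Main obstacle.} The delicate point is the bootstrap in Step 3: upgrading merely polynomial decay of $h$ to Gaussian decay. The singular-integral part $K_{2}+K_{3}$ is not smoothing along the tangential directions $u-v\perp v$, so a naive pointwise estimate fails on that angular slice. The argument has to exploit both the narrow angular width ($\sim 1/|v|$) of the bad slice and the $\langle v\rangle^{-1}$-gain from $(\nu+i\mathbf{n}\cdot v-\zeta)^{-1}$, iterated enough times so that the Gaussian-smoothing component of the kernel eventually dominates and propagates exponential decay back to $h$.
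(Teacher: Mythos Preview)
Your strategy---Fredholm alternative plus a decay bootstrap into $M^{1/2}L^{2}$---is genuinely different from the paper's, which follows the Mouhot/Gualdani--Mischler--Mouhot enlargement construction: split $K=K_{\delta}+(K-K_{\delta})$ with $K_{\delta}$ a smooth truncation whose output is compactly supported in $v$ (so $M^{-1/2}K_{\delta}:\langle v\rangle^{-m}L^{1}\to L^{2}$ is automatic), show via Povzner's inequality that $(K-K_{\delta})(\nu+i\mathbf{n}\cdot v-\zeta)^{-1}$ is small on $\langle v\rangle^{-m}L^{1}$ for $m$ large and $\delta$ small so that $B_{\delta}=\nu+i\mathbf{n}\cdot v-\zeta+(K-K_{\delta})$ inverts by Neumann series, and then write down an explicit right inverse for $L_{\mathbf{n}}-\zeta$ on $\langle v\rangle^{-m}L^{1}$ that routes through the already-known inverse on $M^{1/2}L^{2}$.

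The gap in your argument is precisely the one you flag in Step~3, and the discussion you give does not close it. Iterating $h=-(\nu+i\mathbf{n}\cdot v-\zeta)^{-1}Kh$ gains one power of $\langle v\rangle^{-1}$ per step from the multiplier, and the $K_{1}$ piece does yield Gaussian decay; but the $(K_{2}+K_{3})$ piece does not: on the tangential slice where $u-v$ is nearly orthogonal to $v$ the kernel $|u-v|^{-1}e^{-|(u-v)\cdot v|^{2}/|u-v|^{2}}$ carries no decay in $|v|$, so $(K_{2}+K_{3})h$ inherits only the polynomial decay of $h$. After any finite number of iterations you therefore obtain $h\in\langle v\rangle^{-N}L^{1}$ for every $N$, which is strictly weaker than $h\in M^{1/2}L^{2}$ (e.g.\ $e^{-|v|}$ lies in every polynomially weighted $L^{1}$ but not in $M^{1/2}L^{2}$). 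The narrow-angular-width observation you make buys at most additional polynomial factors, not an exponential. The paper's route avoids this completely: because $K_{\delta}$ has compactly supported output, the passage from $L^{1}$ to $M^{1/2}L^{2}$ costs nothing and no bootstrap is needed. If you want to salvage your approach you would need an independent argument that any $\langle v\rangle^{-m}L^{1}$-eigenfunction of $L_{\mathbf{n}}$ automatically has Gaussian decay; that is essentially what the enlargement machinery supplies, and I do not know a shortcut.
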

The proof will be in Subsection \ref{sec:matchingArgu}, here we use some construction and ideas from \cite{Mouhot2006, Mouhot2010}, see also \cite{Ark1988,Wenn1993}. However our proof is self-contained, and is more direct.


Based on Propositions \ref{prop:roughB}, \ref{prop:largeN} and \ref{prop:compact}, we ready to prove Lemma \ref{LM:compactLm}.
\begin{proof}
We start with choosing $N$ and $X$, to define the three regimes listed before Proposition \ref{prop:roughB}.

Let $m\geq Y$, with $Y$ large enough to make Proposition \ref{prop:roughB}, \ref{prop:largeN} and \ref{prop:compact} applicable. 

Then choose $N\in \mathbb{N}$ large enough to make $\Upsilon_m^2 |N|^{-\frac{1}{5}}+C(m)|N|^{-\frac{1}{10}}\leq \epsilon$, then by Propositions \ref{prop:largeN}, for any ${\bf{n}}$ with $|{\bf{n}}|\geq N$,
\begin{align}
\Big\|\langle v\rangle^{m} [1+K_{\zeta,{\bf{n}}}]^{-1}\langle v\rangle^{-m}\Big\|_{L^1\rightarrow L^1}\lesssim \Upsilon_{m}.
\end{align}
After choosing $N$, we choose $X$ as 
\begin{align}
X:=(1+|N|^2)[\Upsilon_m+m]^4,
\end{align} so that for any $|{\bf{n}}|\leq N$ and $\zeta\in \Gamma_{{\bf{n}}}$ satisfying $|\zeta|\geq X$, Proposition \ref{prop:roughB} applies.

What is left is the regime where $|{\bf{n}}|\leq N$ and $|\zeta|\leq X$, here we apply Proposition \ref{prop:compact}. The constant $C_{\bf{n},\zeta}$ in \eqref{eq:invert} is uniformly bounded since here the considered regime $|{\bf{n}}|\leq N$ and $|\zeta|\leq X$ is compact.

Collecting the estimates above, we prove Lemma \ref{LM:compactLm}.

\end{proof}

In the rest of this section, we prove Propositions \ref{prop:roughB}, \ref{prop:largeN} and \ref{prop:compact}. Upon completion of the work, we realize that, by reading known works such as \cite{BoGamPan04, Mouhot2010}, some of the ideas to be used in proving Propositions \ref{prop:roughB}, \ref{prop:largeN} and \ref{prop:compact} were in proving Povzner's inequality, see also \cite{ MR1478067, MR1461113, Mouhot2010, Mouhot2006}. However in general our proof is more direct, and self-contained.

Before the proof we define a small constant.

Recall the definitions of operators $K_{l},\ l=1,2,3,$ in \eqref{eq:difK2}. Define a new quantity $\delta_{m,0}$ by
\begin{align}
\delta_{m,0}:=\sum_{l=1}^{3}\|\chi_{> m}\langle v\rangle^{m} K_{l} \langle v\rangle^{-m-1}\chi_{> m}\|_{L^1\rightarrow L^1}.\label{eq:difDeltaM0}
\end{align}
Here $\chi_{>m}$ is a Heaviside function defined as
\begin{align}\label{eq:cutoff}
\chi_{> m}(v)=\left[
\begin{array}{cc}
1\ &\ \text{if}\ |v|> m\\
0\ &\ \text{otherwise.}
\end{array}
\right.
\end{align}
The result is
\begin{lemma}\label{LM:Control}
The quantity $\delta_{m,0}$ satisfies the following estimate
\begin{align}
\delta_{m,0}\rightarrow 0\ \text{as} \ m\rightarrow +\infty.
\end{align} 
\end{lemma}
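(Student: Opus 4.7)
The plan is to bound each of the three norms $\|\chi_{>m}\langle v\rangle^m K_l\langle v\rangle^{-m-1}\chi_{>m}\|_{L^1\to L^1}$ separately via the Schur-type inequality $\|T\|_{L^1\to L^1}\le \sup_u \int |t(v,u)|\,dv$ for an integral operator $T$ with kernel $t(v,u)$, applied to the explicit kernels recorded in \eqref{eq:difK}: $k_1(v,u)=\pi e^{-|v|^2}|u-v|$ for $K_1$, and $k_{23}(v,u)=2\pi|u-v|^{-1}e^{-|(u-v)\cdot v|^2/|u-v|^2}$ for $K_2+K_3$. The problem then reduces to showing
$$
\mathcal{J}_l(m)\ :=\ \sup_{|u|>m}\langle u\rangle^{-m-1}\int_{|v|>m}\langle v\rangle^m |k_l(v,u)|\,d^3v\ \longrightarrow\ 0 \qquad (m\to\infty)
$$
for each $l$; summing the three contributions yields the claim.

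The $K_1$ contribution is handled by the output Gaussian: for $m$ sufficiently large and $|v|>m$ the elementary comparison $\langle v\rangle^m e^{-|v|^2}\le e^{-|v|^2/2}$ holds, whence
$$
\int_{|v|>m}\langle v\rangle^m e^{-|v|^2}|u-v|\,dv\ \lesssim\ (1+|u|)\,e^{-m^2/4},
$$
and combining with $\langle u\rangle^{-m-1}\chi_{>m}(u)\lesssim m^{-m-1}$ gives $\mathcal{J}_1(m)\to 0$ exponentially fast.

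For $K_2+K_3$ the plan is to split the integration region. In the outer region $\{|v|\ge 2|u|\}$ the elementary inequalities $|(u-v)\cdot v|\ge |v|(|v|-|u|)$ and $|u-v|\le |u|+|v|$ force $|(u-v)\cdot v|/|u-v|\ge |v|/3$, hence $|k_{23}(v,u)|\le (4\pi/|v|)e^{-|v|^2/9}$, and the $K_1$ argument carries over. In the inner region $\{m<|v|<2|u|\}$, which in particular forces $|u|>m/2$, the plan is to substitute $w=u-v$ and pass to spherical coordinates $w=r\hat w$, $s=\hat u\cdot\hat w$. The kernel becomes $r^{-1}e^{-(|u|s-r)^2}$; writing $t:=|u|s-r$ one finds $|v|^2=|u|^2-r^2-2rt$, so the Gaussian $e^{-t^2}$ concentrates the $t$-integration near $0$, i.e.\ near $r=|u|s$, where $|v|^2\approx |u|^2(1-s^2)$. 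The cutoff $|v|>m$ then forces $1-s^2\gtrsim m^2/|u|^2$, so $s$ is bounded away from $\pm 1$; executing the resulting $(r,s)$-integral against $e^{-t^2}$ and pairing with $\langle u\rangle^{-m-1}$ produces a bound that vanishes as $m\to\infty$.

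The principal obstacle is the inner region for $K_2+K_3$: the kernel is simultaneously singular at $v=u$ and decays only anisotropically (the exponent $|(u-v)\cdot v|^2/|u-v|^2$ vanishes on the two-dimensional set $\{(u-v)\perp v\}$, which passes through the singular point $v=u$). No pointwise smallness factor can be extracted there, so the decay as $m\to\infty$ must come from the joint interplay of the weight $\langle v\rangle^m$, the anisotropic Gaussian, the cutoffs $|v|,|u|>m$, and the integrable $r^{-1}$ singularity, accomplished via the change of variables above.
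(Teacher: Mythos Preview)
Your overall structure---Schur bound, split by $|v|\gtrless 2|u|$, then pass to $w=u-v$ with spherical coordinates $(r,s)$---is correct and, after your change of variables, lands on exactly the same one-dimensional integral the paper studies. Indeed, your identity $|v|^2=|u|^2(1-s^2)+t^2$ (with $t=|u|s-r$) is the student-coordinates version of the paper's $|v|^2=v_1^2+v_2^2+v_3^2$ with the dictionary $v_1\leftrightarrow t$, $v_2^2+v_3^2\leftrightarrow |u|^2(1-s^2)$, $u_1\leftrightarrow |u|s$; the paper gets there by rotating $\omega$ in the original collision formula rather than through the Grad kernel, but the endpoint is identical.

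The gap is in your last paragraph. Your heuristic ``the cutoff $|v|>m$ then forces $1-s^2\gtrsim m^2/|u|^2$, so $s$ is bounded away from $\pm1$'' does not give decay: you are taking the supremum over $|u|>m$, and $|u|$ can be arbitrarily large, so $s$ can approach $\pm1$. The decay does not come from a restriction on $s$. What actually happens, in your variables, is that after localizing $|t|\lesssim m^{3/4}$ (the complementary region is handled by the Gaussian $e^{-t^2}$ alone, as in the paper's $\delta_{m,2}$), the constraint $|v|>m$ lets one absorb the $t$-dependence of $\langle v\rangle^m$ and obtain
\[
e^{-t^2/2}\,\frac{\langle v\rangle^m\, r}{\langle u\rangle^{m+1}}\ \lesssim\ \frac{(|u|^2(1-s^2))^{m/2}\,(|u|s+|t|)}{(1+|u|^2)^{(m+1)/2}}\ \lesssim\ s(1-s^2)^{m/2}\ +\ m^{-1/4}.
\]
The first term is bounded by $\sup_{s\in[0,1]} s(1-s^2)^{m/2}=O(m^{-1/2})$, which is precisely the paper's $\delta_{m,1}$ mechanism; the second term is $O(m^{-1/4})$. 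This is the missing computation---without it the ``executing the resulting $(r,s)$-integral \ldots\ produces a bound that vanishes'' remains an assertion rather than a proof. Once you supply this step your argument is complete and matches the paper's.
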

\begin{proof}

It is easy to estimate the $K_1-$term,
by the rapidly decaying factor $e^{-|v|^2}$ in the integral kernel of $K_1$ and that $\lim_{m\rightarrow \infty}\int_{|v|> m}e^{-|v|^2}\ dv= 0$. Compute directly to have, for any nonzero function $f,$
\begin{align}
\frac{\|\chi_{> m} \langle v\rangle^{m}K_1 \langle v\rangle^{-m-1} \chi_{> m}f\|_{L^1}}{\|f\|_{L^1}}\rightarrow 0 \ \text{as}\ m\rightarrow \infty.\label{eq:k1del01}
\end{align}

Now we start estimating the $K_2$-terms by casting the expression into a convenient form.
For $\omega\in \mathbb{S}^2$ in the definition of $K_{2}$, we look for an unitary rotation $U_{\omega}$ to make 
\begin{align}
U_{\omega}^* \omega=\left[
\begin{array}{cc}
1\\
0\\
0
\end{array}
\right].\label{eq:unitary}
\end{align}
For that purpose, since 
for any $\omega\in \mathbb{S}^2$, there exist unique $\theta\in [0,\ 2\pi)$ and $\gamma\in [-\frac{\pi}{2},\ \frac{\pi}{2})$ such that
\begin{align}
\omega=\left[
\begin{array}{c}
cos\theta \ cos\gamma\\
sin\theta\ cos\gamma\\
sin\gamma
\end{array}
\right],
\end{align} we choose the rotation $U_{\omega}^*$ in \eqref{eq:unitary} as 
\begin{align}\label{eq:rota}
U_{\omega}^*:=
\left[
\begin{array}{ccc}
cos\gamma &0 & sin\gamma \\
0&1&0\\
-sin\gamma & 0& cos\gamma
\end{array}
\right]
\left[
\begin{array}{ccc}
cos\theta & sin\theta &0\\
-sin\theta & cos\theta &0\\
0&0&1
\end{array}
\right].
\end{align} 

Insert this rotation into appropriate places of $K_2$ and $f$ and change variables $U_{\omega}^* u\rightarrow u,\ U_{\omega}^*v\rightarrow v$ to obtain
\begin{align}
\begin{split}\label{eq:smallIn2}
& \|\chi_{> m} \langle v\rangle^{m}K_2 \langle v\rangle^{-m-1} \chi_{> m}f\|_{L^1}\\
\leq &\int_{\mathbb{S}^2}\int_{R(m)}   e^{-|v_1|^2-|u_2|^2-|u_3|^2}
\frac{\langle v\rangle^{m} |u_1-v_1|}{(1+u_1^2+v_2^2+v_3^2)^{\frac{m+1}{2}}}\ |f|\big((u_1, v_2, v_3)\ U_{\omega}^*\big) \ d^3 u d^3 v d\omega\\
\lesssim &\int_{R(m)}   e^{-|v_1|^2}
\frac{| v|^{m} |u_1-v_1|}{(u_1^2+v_2^2+v_3^2)^{\frac{m+1}{2}}}\ \Big[ \int_{\mathbb{S}^2}|f|\big((u_1, v_2, v_3)\ U_{\omega}^*\big)\ d\omega\ \Big] \ d u_1 d^3 v 
\end{split}
\end{align}
where $R(m)$ is a set defined as
$$
R(m):=\Big\{(u,v)\in \mathbb{R}^3\times \mathbb{R}^3\ \Big|\ |v|> m, \
\sqrt{u_1^2+v_2^2+v_3^2}> m\Big\},
$$ and the $u_2-$ and $u_3-$variables are integrated out. To analyze in detail we adopt polar coordinate on $(u_1, v_2, v_3)$ by defining
\begin{align}
u_1=r \cos\alpha,\ v_2=r \sin\alpha \sin\beta, v_3=r \sin\alpha \cos\beta, \alpha\in [0,\pi], \beta\in [0,2\pi].
\end{align}

The corresponding part of \eqref{eq:smallIn2} becomes
\begin{align}
\begin{split}
D_1=&:\int_{ r,\ \sqrt{v_1^2+r^2\sin^2\alpha}> m} e^{-v_1^2} r^{-m-1} [v_1^2+r^2\sin^2\alpha]^{\frac{m}{2}} |v_1-r\cos\alpha| \sin\alpha \times\\
 &\int_{\mathbb{S}^2}|f|\big(r (\cos\alpha, \sin\alpha \sin\beta, \sin\alpha  \cos\beta)\ U_{\omega}^*\big)\ d\omega\ r^2 dr dv_1d\alpha d\beta.
\end{split}
\end{align} Compute directly to find
\begin{align}
D_1\lesssim \|\chi_{> m}f\|_{L^1} \sup_{r\geq m}r^{-m-1}\int_{\sqrt{v_1^2+r^2\sin^2\alpha}\geq m}e^{-v_1^2} r^{-m-1} [v_1^2+r^2\sin^2\alpha]^{\frac{m}{2}} |v_1-r\cos\alpha| \ dv_1.
\end{align}


Next we exploit the rapid decay of $e^{-|v_1|^2}$ by considering two integral regions: $|v_1|> m^{\frac{3}{4}}$ and $|v_1|\leq m^{\frac{3}{4}}$. 

For the first case $|v_1|> m^{\frac{3}{4}}\gg 1$, we compare $r^{-m-1} \Big| v_1^2+r^2 sin^2\alpha\Big|^{\frac{m}{2}} \Big(|r cos\alpha|+|v_1|\Big)$ to $|v_1|^{m+1}$ and find, since $r\geq m,$
\begin{align*}
|v_1|^{-m-1} r^{-m-1} \Big| v_1^2+r^2 sin^2\alpha\Big|^{\frac{m}{2}} \Big(|r cos\alpha|+|v_1|\Big)\leq& 2^{m}.
\end{align*}
Thus in the considered region
\begin{align}
&\sup_{r\geq m} \Big[r^{-m-1}\int_{\sqrt{v_1^2+r^2 sin^2\alpha}\geq m,\ |v_1|\geq m^{\frac{3}{4}}}   e^{-|v_1|^2}\Big| v_1^2+r^2 sin^2\alpha\Big|^{\frac{m}{2}} \Big(|r cos\alpha|+|v_1|\Big)
\ dv_1d\alpha\Big]\nonumber\\
\leq &2^{m}\int_{|v_1|\geq m^{\frac{3}{4}}} e^{-|v_1|^2} |v_1|^{m+1}dv_1\nonumber\\
= &3\cdot 2^{m-2}\int_{|u|\geq m} e^{-|u|^{\frac{3}{2}}} |u|^{\frac{3m
+2}{4}}du\leq
2\delta_{m,1}\label{eq:unbounded}
\end{align}
where, in the last step we changed variable $u=|v_1|^{\frac{4}{3}}$, and the constant $\delta_{m,1}$ is defined as
\begin{align}
\delta_{m,1}:=2^{m}\int_{z\geq m^{\frac{3}{4}}}   z^{2m} e^{-\frac{z^2}{4}} dz=3\cdot 2^{m-2}\int_{z\geq m} z^{\frac{3m}{2}-\frac{1}{4}} e^{-\frac{1}{4}z^{\frac{3}{2}}}\ dz.\label{eq:defDeltaM2}
\end{align} The fact that the function $e^{-\frac{1}{4}z^{\frac{3}{2}}}$ decays faster than $e^{-z}$ implies that $\frac{\delta_{m+1,2}}{\delta_{m,1}}\ll 1$ if $m$ is large. Consequently 
\begin{align}
\delta_{m,1}\rightarrow 0\ \text{as}\ m\rightarrow \infty. \label{eq:mInte}
\end{align}

Now we consider the region $|v_1|\leq m^{\frac{3}{4}}$. The condition $\sqrt{v_1^2+r^2 sin^2\alpha}\geq m$ implies that $r\sin\alpha\geq m\big(1+o(1)\big)$. This together with $r\geq m$ makes
\begin{align}
\begin{split}
&e^{-\frac{1}{2}|v_1|^2}r^{-m-1}\Big| v_1^2+r^2 sin^2\alpha\Big|^{\frac{m}{2}} \Big(|r cos\alpha|+|v_1|\Big)\\
\leq & 2\sin^{m}(\alpha) \ e^{-\frac{1}{2}|v_1|^2} \Big(1+\frac{|v_1|^2}{r^2 \sin^2\alpha}\Big)^{\frac{m}{2}}(|\cos\alpha|+m^{-\frac{1}{4}})\\
\leq & 2\sin^{m}(\alpha) \ e^{-\frac{1}{2}|v_1|^2} \Big(1+\frac{2|v_1|^2}{m^2}\Big)^{\frac{m}{2}}(|\cos\alpha|+m^{-\frac{1}{4}})\\
\leq & 2\sin^{m}(\alpha)(|\cos\alpha|+m^{-\frac{1}{4}}).
\end{split}
\end{align} where in the last step we used the inequality $ \Big(1+\frac{2|v_1|^2}{m^2}\Big)^{\frac{m}{2}}\leq e^{\frac{1}{2}|v_1|^2}
$, which is equivalent to $1+\frac{2|v_1|^2}{m^2}\leq e^{\frac{|v_1|^2}{m}}$, and the latter is proved by Taylor-expanding the exponential and using that $m\gg 1.$

This renders the integral small since
\begin{align}
\begin{split}
&\sup_{r\geq m} \Big[r^{-m-1}\int_{\sqrt{v_1^2+r^2 sin^2\alpha}\geq m,\ |v_1|\leq m^{\frac{3}{4}}}   e^{-|v_1|^2}\Big| v_1^2+r^2 sin^2\alpha\Big|^{\frac{m}{2}} \Big(|r cos\alpha|+|v_1|\Big)
\ dv_1\Big]\\
\lesssim &\sin^{m}(\alpha)(|\cos\alpha|+m^{-\frac{1}{4}})\rightarrow 0\ \text{as}\ m\rightarrow \infty.
\end{split}
\end{align}

This together with \eqref{eq:unbounded} and \eqref{eq:k1del01} imply the desired estimate for $K_2$ in Lemma \ref{LM:Control}.

Now we estimate the $K_3-$term.

To transform the expression into a convenient form, we follow the steps in \eqref{eq:smallIn2} to find
\begin{align}
\begin{split}\label{eq:smallIn20}
& \|\chi_{> m} \langle v\rangle^{m}K_3 \langle v\rangle^{-m-1} \chi_{> m}f\|_{L^1}\\
\lesssim &\int_{\mathbb{S}^2}\int_{R_2(m)}   e^{-|u_1|^2-|v_2|^2-|v_3|^2}
\frac{| v|^{m} |u_1-v_1|}{(v_1^2+u_2^2+u_3^2)^{\frac{m+1}{2}}}\ |f|\big((v_1, u_2, u_3)\ U_{\omega}^*\big) \ d^3 u d^3 v d\omega
\end{split}
\end{align}
where $R_2(m)$ is a set defined as
$$
R_2(m):=\Big\{(u,v)\in \mathbb{R}^3\times \mathbb{R}^3\ \Big|\ |v|> m, \
\sqrt{v_1^2+u_2^2+u_3^2}> m\Big\}.
$$

All the terms can be controlled by the same techniques used in analyzing the $K_2$-term, except the following one
\begin{align}
D_2:=\int_{\sqrt{v_1^2+u_2^2+u_3^2}> m}\frac{|v_1|^{m+1}}{(v_1^2+u_2^2+u_3^2)^{\frac{m+1}{2}}}\int_{\mathbb{S}^2}|f|\big((v_1, u_2, u_3)\ U_{\omega}^*\big) d\omega\ dv_1 du_2 du_3.
\end{align} The difficulty is that when $|v_1|\approx (v_1^2+u_2^2+u_3^2)^{\frac{1}{2}}$, it is hard to find smallness from $\frac{|v_1|^{m+1}}{(v_1^2+u_2^2+u_3^2)^{\frac{m+1}{2}}}$, which equals to 1 on a zero-measure set.
To overcome this we use the $\omega-$integral, thus free some integration variables so that by integrating them over a small region we find the smallness. To cast the expression into a convenient form and use the rotation $U_{\omega}^*$ in \eqref{eq:rota}, we
change the coordinate $$v_1=r\sin\beta \cos\alpha, \ v_2=r\cos\beta, \ v_3=r\sin\beta \sin\alpha,\ \beta\in [0,\pi],\ \alpha\in [0,2\pi]$$ to find
\begin{align}
D_2=\int_{r> m}(\sin\beta\ \cos\alpha)^{m+1}sin\beta\ r^2\int_{\mathbb{S}^2}|f|\big(r(\sin\beta \cos\alpha, \cos\beta,\ \sin\beta \sin\alpha)\ U_{\omega}^*\big) d\omega\ dr d\alpha  d\beta.
\end{align} In what follows we only consider the region $\Sigma$, defined as
\begin{align}
\Sigma:=\{(\beta,\alpha)\ \Big|\ |\cos\beta|,\ |\sin\alpha|\leq m^{-\frac{1}{4}}\},\label{twosmall}
\end{align} in the other region, the fact $(\sin\beta\ \cos\alpha)^{m+1}\rightarrow 0$ uniformly as $m\rightarrow \infty$ makes the integral small.
Observe that when $|\sin\beta \cos\alpha|=1$, or $\cos\beta=\sin\alpha=0$, the following identity holds
\begin{align}
2\int_{|x|> m} |f(x)|\ d^3x=\int_{r>m} r^2\int_{\mathbb{S}^2}|f|\big(r(\sin\beta \cos\alpha, \cos\beta,\ \sin\beta \sin\alpha)\ U_{\omega}^*\big) d\omega dr 
\end{align} Thus we approximate $\int_{r>m} r^2\int_{\mathbb{S}^2}|f|\big(r(\sin\beta \cos\alpha, \cos\beta,\ \sin\beta \sin\alpha)\ U_{\omega}^*\big) d\omega dr$ by $2\int_{|x|> m} |f(x)|\ d^3x$.

For that purpose we consider a factor of the integrand in $D_2$: recall the expression of $U_{\omega}^{*}$ in \eqref{eq:rota},
\begin{align}
\begin{split}
&r^2f(r(\sin\beta \cos\alpha, \cos\beta,\ \sin\beta \sin\alpha)\ U_{\omega}^*) d\omega dr\\
=&r^2f\Big(r\big(\sin\beta \cos(\gamma-\alpha), \cos\beta, \sin\beta \sin(\gamma-\alpha)\big)\left[
\begin{array}{ccc}
cos\theta & sin\theta &0\\
-sin\theta & cos\theta &0\\
0&0&1
\end{array}
\right]\Big) \cos\gamma\ d\gamma d\theta\ dr\\
=&E_1-E_2
\end{split} 
\end{align} 
where in the last step we approximate $\cos\gamma$ by $\cos(\gamma-\alpha)$, the terms $E_1$ and $E_2$ are produced after applying the identity $\cos\gamma=\cos(\gamma-\alpha+\alpha)=\cos(\gamma-\alpha) \cos(\alpha)-\sin(\gamma-\alpha) \sin(\alpha),$ and are defined as
\begin{align*}
E_1:=r^2 f\Big(r\big(\sin\beta \cos(\gamma-\alpha), \cos\beta, \sin\beta \sin(\gamma-\alpha)\big)\left[
\begin{array}{ccc}
cos\theta & sin\theta &0\\
-sin\theta & cos\theta &0\\
0&0&1
\end{array}
\right]\Big) \cos(\gamma-\alpha) \cos(\alpha) d\gamma d\theta\ dr
\end{align*}
and 
\begin{align*}
E_2:=r^2 f\Big(r\big(\sin\beta \cos(\gamma-\alpha), \cos\beta, \sin\beta \sin(\gamma-\alpha)\big)\left[
\begin{array}{ccc}
cos\theta & sin\theta &0\\
-sin\theta & cos\theta &0\\
0&0&1
\end{array}
\right]\Big) \sin(\gamma-\alpha) \sin(\alpha) d\gamma d\theta \ dr.
\end{align*} 

By the definitions of $E_1$ and $E_2$ we decompose $D_2$ into two parts
\begin{align}
D_2=D_{2,E_1}-D_{2,E_2}
\end{align} with the two terms naturally defined.

The key observation is after changing variables, with $\beta$ and $\alpha$ fixed, $$r\Big(\sin\beta \cos(\gamma-\alpha), \cos\beta, \sin\beta \sin(\gamma-\alpha)\Big)\left[
\begin{array}{ccc}
cos\theta & sin\theta &0\\
-sin\theta & cos\theta &0\\
0&0&1
\end{array}
\right]= x,$$
we have
\begin{align}
E_1=|f(x)|(1+o(1)) d^3 x.
\end{align} The calculation, specifically computing the determinant of Jacobian matrix, is easy but tedious, thus we skip this part.
Consequently
\begin{align}
D_{2,E_1}\lesssim \|\chi_{>m}f\|_{L^1} \int_{\Sigma} (\sin\beta\ |\cos\alpha|)^{m+1}sin\beta\ d\beta d\alpha\rightarrow 0\ \text{as}\ m\rightarrow \infty.
\end{align}
The smallness of $D_{2,E_2}$ is from $|\sin\alpha|\leq m^{-\frac{1}{4}}$, see \eqref{twosmall}, and different from estimating $D_{2,E_1}$, here we integrate $r$, $\alpha$ and $\beta$ first, then $\theta$ and $\gamma$. The observation 
$$\int_{r>m} r^2 sin\beta f\Big(r\big(\sin\beta \cos(\gamma-\alpha), \cos\beta, \sin\beta \sin(\gamma-\alpha)\big)\left[
\begin{array}{ccc}
cos\theta & sin\theta &0\\
-sin\theta & cos\theta &0\\
0&0&1
\end{array}
\right]\Big) dr d\beta d\alpha=\|\chi_{>m}f\|_{L^1},$$ directly implies that
\begin{align}
D_{2,E_2}\lesssim m^{-\frac{1}{4}}\|\chi_{>m}f\|_{L^1}.
\end{align}

Collect the estimates above to complete estimating the $K_3-$term.
\end{proof} 

\subsection{Proof of Proposition \ref{prop:roughB}}\label{subsec:roughB}

We start by casting the expression into a convenient form, by transforming the operator $1+K_{\zeta,\bf{n}}$ into a $2\times 2$ operator-valued matrix.
Let $\chi_{\leq 2m}$ be a Heaviside function
\begin{align}\label{eq:cutoff2}
\chi_{\leq 2m}(v)=\left[
\begin{array}{cc}
1\ &\ \text{if}\ |v|\leq 2m\\
0\ &\ \text{otherwise}
\end{array}
\right.
\end{align} and naturally $\chi_{> 2m}$ is defined as $$\chi_{> 2m}:=1-\chi_{\leq 2m}.$$

Decompose the $L^1(\mathbb{R}^3)$ space into a vector space, isometrically,
\begin{align}
L^1(\mathbb{R}^3)\rightarrow \left[
\begin{array}{lll}
\chi_{\leq 2m} L^1(\mathbb{R}^3)\\
\chi_{>2m}L^1(\mathbb{R}^3)
\end{array}
\right].
\end{align} 
Specifically, for any function $f$,  
\begin{align}
f\rightarrow \left[
\begin{array}{lll}
\chi_{\leq 2m} f\\
\chi_{>2m}f
\end{array}
\right]
\end{align}
with the norm defined as, 
\begin{align*}
\|\left[
\begin{array}{ll}
\chi_{\leq 2m}f\\
\chi_{> 2m}f
\end{array}
\right]\|_{L^1}:=\|\chi_{\leq 2m}f\|_{L^1}+\|\chi_{> 2m}f\|_{L^1}=\|f\|_{L^1}.
\end{align*}

Based on this, we convert the operator $1+K_{\zeta,{\bf{n}}}$ into an operator-valued $2\times 2$ matrix. Specifically, for any function $f$,
\begin{align}
 (1+K_{\zeta,{\bf{n}}})f\rightarrow (1+D)\left[
\begin{array}{lll}
\chi_{\leq 2m} f\\
\chi_{>2m}f
\end{array}
\right].
\end{align}
Here $D$ is an operator-valued $2\times 2$ matrix defined as
\begin{align}
D:=&\left[
\begin{array}{cc}
\chi_{\leq 2m} K_{\zeta,{\bf{n}}}\chi_{\leq 2m} & \chi_{\leq 2m} K_{\zeta,{\bf{n}}}\chi_{> 2m}\\
\chi_{> 2m} K_{\zeta,{\bf{n}}}\chi_{\leq 2m}&\chi_{> 2m} K_{\zeta,{\bf{n}}}\chi_{> 2m}
\end{array}
\right].
\end{align}

Based on the discussion above, their norms are preserved,
\begin{align}
\|\langle v\rangle^{m}(1+K_{\zeta,{\bf{n}}})f\|_{L^1}=\|\langle v\rangle^{m} (1+D)\ \left[
\begin{array}{ll}
\chi_{\leq 2m}f\\
\chi_{>2m}f
\end{array}
\right]\|_{L^1}.\label{eq:decomL1}
\end{align}

Next we prove that all entries in $D$ are small. Recall that the small constants $\delta_{m,l},\ l=0,1,$ are defined in \eqref{eq:difDeltaM0} and \eqref{eq:defDeltaM2}. 
\begin{lemma}\label{LM:estD}
Under the same condition as that in Proposition \ref{prop:roughB}, namely
\begin{align}
|\zeta|\geq (1+|{\bf{n}}|^2)[\Upsilon_m+m]^{4}, \ m\gg 1,\label{eq:largezeta}
\end{align} the four entries of the matrix $D$ satisfy the smallness estimates
\begin{align}
\Big\|\chi_{\leq 2m} \langle v\rangle^{m}K_{\zeta,{\bf{n}}}\langle v\rangle^{-m}\chi_{\leq 2m}\Big\|_{L^1\rightarrow L^1},\ \Big\|\chi_{> 2m}\langle v\rangle^{m} K_{\zeta,{\bf{n}}}\langle v\rangle^{-m}\chi_{\leq 2m}\Big\|_{L^1\rightarrow L^1}\lesssim |\zeta|^{-\frac{1}{2}},\label{eq:zeta}
\end{align} 
\begin{align}
\Big\|\chi_{> 2m}\langle v\rangle^{m} K_{\zeta,{\bf{n}}}\langle v\rangle^{-m}\chi_{> 2m}\Big\|_{L^1\rightarrow L^1}\lesssim \delta_{m,0}\label{eq:geqgeq}
\end{align} and
\begin{align}
\Big\|\chi_{\leq 2m} \langle v\rangle^{m}K_{\zeta,{\bf{n}}}\langle v\rangle^{-m}\chi_{> 2m}\Big\|_{L^1\rightarrow L^1}\lesssim \delta_{m,0}+2^{-m}\label{eq:leqgeq}
\end{align}
\end{lemma}
The lemma will be proved in subsubsection \ref{subsub:LMD}. 

The fact that $D$ is small obviously implies that $1+D$ is uniformly invertible. This is the desired Proposition \ref{prop:roughB}.

Next we prove Lemma \ref{LM:estD} to complete the proof.

\subsubsection{Proof of Lemma \ref{LM:estD}}\label{subsub:LMD}
\begin{proof}
We start with proving \eqref{eq:zeta}. Instead of proving it directly, we observe that the following estimate
\begin{align}
\|\langle v\rangle^{m} K_{\zeta,{\bf{n}}}\langle v\rangle^{-m}\chi_{\leq 2m}\|_{L^1\rightarrow L^1}\lesssim |\zeta|^{-\frac{1}{2}} \label{eq:zeta2}
\end{align} obviously implies the desire two estimates in \eqref{eq:zeta}.

To prove \eqref{eq:zeta2},
we recall that by definition
\begin{align*}
K_{\zeta,{\bf{n}}}:=(K_{1}-K_2-K_3) \Big(\nu(v)+i{\bf{n}}\cdot v-\zeta\Big)^{-1}.
\end{align*}  
Observe that, if $|\zeta|$ is large as in \eqref{eq:largezeta}, then in the set $|v|\leq 2m,$
\begin{align}
|\nu(v)+i{\bf{n}}\cdot v-\zeta|\geq |\zeta|-|{\bf{n}}||v|-|\nu(v)|\geq \frac{1}{4}|\zeta|^{\frac{3}{4}} \langle v\rangle. 
\end{align} This together with the estimate for $K_{l},\ l=1,2,3,$ in \eqref{eq:mK1} implies the desired result
\begin{align}
\| \langle v\rangle^{m}K_{\zeta,{\bf{n}}}\langle v\rangle^{-m}\chi_{\leq 2m}\|_{L^1\rightarrow L^1}\leq 4 |\zeta|^{-\frac{3}{4}}\sum_{l=1}^{3}\| \langle v\rangle^{m}K_{l}\langle v\rangle^{-m-1}\chi_{\leq 2m}\|_{L^1\rightarrow L^1}\leq 4\Upsilon_m |\zeta|^{-\frac{3}{4}}\leq 4|\zeta|^{-\frac{1}{2}}.
\end{align}

It is easy to prove \eqref{eq:geqgeq}. By $|\nu(v)+i{\bf{n}}\cdot v+\zeta|^{-1}\lesssim \langle v\rangle^{-1}$ from \eqref{eq:ReasonCurve}, and recall $\delta_{m,0}$ in Lemma \ref{LM:Control}:
\begin{align}
\|\langle v\rangle^{m}\chi_{> 2m} K_{\zeta,{\bf{n}}}\langle v\rangle^{-m}\chi_{> 2m}\|_{L^1\rightarrow L^1}\lesssim &\sum_{l=1}^{3}\|\langle v\rangle^{m}\chi_{> 2m} K_{l} \langle v\rangle^{-m-1}\chi_{> 2m}\|_{L^1\rightarrow L^1}\leq \delta_{m,0}.\label{eq:grle}
\end{align} 

Next we prove \eqref{eq:leqgeq}. Compute directly to obtain, for any function $f,$
\begin{align}
\|\langle v\rangle^{m}\chi_{\leq 2m} K_{\zeta,{\bf{n}}}\langle v\rangle^{-m}\chi_{> 2m}f\|_{L^1}
\lesssim \sum_{l=1}^{3}\Big\|\langle v\rangle^{m} K_{l} \langle v\rangle^{-m-1}\chi_{> 2m}|f|\Big\|_{L^1}.
\end{align}
Insert the identity $1=\chi_{\leq m}+\chi_{>m}$ before $K_{l}$ to find the desired estimate
\begin{align}
\begin{split}
&\|\langle v\rangle^{m}\chi_{\leq 2m} K_{\zeta,{\bf{n}}}\langle v\rangle^{-m}\chi_{> 2m}f\|_{L^1}\\
\leq &\sum_{l=1}^{3}\Big\|\langle v\rangle^{m}\chi_{\leq m} K_{l} \langle v\rangle^{-m-1}\chi_{> 2m}|f|\Big\|_{L^1}+\sum_{l=1}^{3}\Big\|\langle v\rangle^{m}\chi_{> m} K_{l} \langle v\rangle^{-m-1}\chi_{> 2m}|f|\Big\|_{L^1}\\
\lesssim & 2^{-m}\sum_{l=1}^{3}\Big\| K_{l} \langle v\rangle^{-1}\chi_{> 2m}|f|\Big\|_{L^1}+ \delta_{m,0}\|\chi_{> 2m}f\|_{L^1}\\
\lesssim & [2^{-m}+\delta_{m,0}]\|\chi_{> 2m}f\|_{L^1}
\end{split}
\end{align} where in the last step we used the obvious estimate $\frac{\langle v\rangle^m}{\langle u\rangle^m}\lesssim 2^{-m}$ if $|u|\geq 2m$ and $|v|\leq m,$ and we bound the second term by $\leq \delta_{m,0}\|\chi_{> 2m}f\|_{L^1}$ using Lemma \ref{LM:Control}.

\end{proof}
\subsection{Proof of Proposition \ref{prop:largeN}}\label{sub:LargeN}
We start with presenting the ideas. Recall that $K_{\zeta,\ \bf{n}}$ is defined as
$$K_{\zeta,\ \bf{n}}=K (\nu+i\bf{n}\cdot v-\zeta)^{-1}.$$ We will exploit that if $|\bf{n}|$ is large, then the purely imaginary part of $\nu+i{\bf{n}}\cdot v-\zeta$ is favorably large, 
except for a ``small" set of $v$, namely when $|v|$ is small, or when $\bf{n}$ is almost orthogonal to $v.$

To separate these sets from the others, we define a Heaviside function $\chi$ as
\begin{align}
\chi(v)=\left\{
\begin{array}{ll}
1\ \text{if}\ |v|\leq |\bf{n}|^{-\frac{1}{4}},\\
1\ \text{if}\ \frac{1}{|v||{\bf{n}}|}|{\bf{n}}\cdot v-\zeta_2|\leq |{\bf{n}}|^{-\frac{1}{4}},\\
0\ \text{otherwise}.
\end{array}
\right.
\end{align} Here $\zeta_2$ is defined as the purely imaginary part of $\zeta$, i.e.
\begin{align}
\zeta_2:=Im \ \zeta.
\end{align}

Then as in \eqref{eq:decomL1}
we transform the linear operator $K_{\zeta,{\bf{n}}}$ into a $2\times 2$ operator valued matrix $F$, defined as
\begin{align}
F:=\left[
\begin{array}{cc}
\chi K_{\zeta,{\bf{n}}}\     \chi &\chi K_{\zeta,{\bf{n}}}(1-\chi)\\
(1-\chi) K_{\zeta,{\bf{n}}}\ \chi&(1-\chi) K_{\zeta,{\bf{n}}}(1-\chi)
\end{array}
\right],
\end{align}
and for any function $g\in L^1$,
\begin{align}
\|\langle v\rangle^{m}(1+K_{\zeta,{\bf{n}}}) g\|_{L^{1}}=\|\langle v\rangle^{m}(1+F)\ \left[
\begin{array}{cc}
\chi g\\
(1-\chi)g
\end{array}
\right]\|_{L^1}
\end{align}

Consequently, to prove the invertibility of $1+K_{\zeta,{\bf{n}}}$, it suffices to prove that for the matrix operator $1+F$.

The entries in $F$ satisfy the following estimates: Recall that the small constants $\delta_{m,l},\ l=0,1,$ are defined in \eqref{eq:difDeltaM0} and \eqref{eq:defDeltaM2}. 
\begin{lemma}\label{LM:K123}
There exists $N$ such that if $|{\bf{n}}|\geq N$, then three entries of $F$ are small
\begin{align}
\|\langle v\rangle^{m}\chi K_{\zeta,\bf{n}}\chi f\|_{L^1}\lesssim \Big[ C(m) |{\bf{n}}|^{-\frac{1}{10}}+\delta_{m,0}+\delta_{m,1} \Big] \|\langle v\rangle^{m}\chi f\|_{L^1}, \label{eq:chi11}
\end{align} with $C(m)$ being some constant depending only on $m$,
and
\begin{align}
\|\langle v\rangle^{m}\chi K_{\zeta,\bf{n}} (1-\chi) f\|_{L^{1}},\ \|\langle v\rangle^{m}(1-\chi) K_{\zeta,\bf{n}} (1-\chi) f\|_{L^{1}}\lesssim \Upsilon_m |{\bf{n}}|^{-\frac{1}{4}} \|\langle v\rangle^{m}(1-\chi) f\|_{L^{1}}.\label{eq:estK3}
\end{align}
One (and only one) of the off-diagonal entries is possibly large,
\begin{align}
\|\langle v\rangle^{m}(1-\chi) K_{\zeta,{\bf{n}}}\ \chi f\|_{L^1}\lesssim  \Upsilon_{m}\|\langle v\rangle^{m}\chi f\|_{L^1}.\label{eq:KLarge}
\end{align}

\end{lemma}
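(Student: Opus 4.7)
The plan is to estimate the four entries of $F$ separately, exploiting the following two structural facts about the cutoff: off $\mathrm{supp}\,\chi$ one has simultaneously $|v|>|{\bf n}|^{-1/4}$ and $|{\bf n}\cdot v-\mathrm{Im}\,\zeta|>|v||{\bf n}|^{3/4}$, so the resolvent $(\nu+i{\bf n}\cdot v-\zeta)^{-1}$ gains an extra factor of $|{\bf n}|^{-1/4}$ beyond the baseline estimate \eqref{eq:ReasonCurve}; on $\mathrm{supp}\,\chi$ the set itself is geometrically constrained to the union of a ball of radius $|{\bf n}|^{-1/4}$ and a slab of thickness $\sim|{\bf n}|^{-1/4}$ normal to ${\bf n}$.

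For the simplest bound \eqref{eq:KLarge} on the "bad" off-diagonal entry $(1-\chi)K_{\zeta,{\bf n}}\chi$, I would combine \eqref{eq:ReasonCurve} with \eqref{eq:mK1}: the multiplier $(\nu+i{\bf n}\cdot v-\zeta)^{-1}$ converts a $\langle v\rangle^{m+1}$ weight into a $\langle v\rangle^m$ weight, so a single application of the $K$ estimate \eqref{eq:mK1} to $\chi f$ yields $\|\langle v\rangle^m K_{\zeta,{\bf n}}\chi f\|_{L^1}\lesssim\Upsilon_m\|\langle v\rangle^m\chi f\|_{L^1}$, and the multiplier $1-\chi$ on the left has $L^\infty$-norm at most $1$, so the bound is preserved.

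For the two estimates in \eqref{eq:estK3}, I would use the off-support resolvent gain: on $\{\chi=0\}$ both $|\nu+i{\bf n}\cdot v-\zeta|^{-1}\lesssim\langle v\rangle^{-1}$ and $|\nu+i{\bf n}\cdot v-\zeta|^{-1}\lesssim(|v||{\bf n}|^{3/4})^{-1}$ hold. Interpolating these two bounds geometrically and using $|v|\geq|{\bf n}|^{-1/4}$ on $\{\chi=0\}$ yields $|\nu+i{\bf n}\cdot v-\zeta|^{-1}(1-\chi)\lesssim|{\bf n}|^{-1/4}\langle v\rangle^{-1}$, at the expense of absorbing a small additional weight loss which is compensated by taking $m$ large. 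Applying \eqref{eq:mK1} and bounding the left cutoff trivially by $1$ produces the claimed factor $\Upsilon_m|{\bf n}|^{-1/4}$.

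The main obstacle is the diagonal ``bad-to-bad'' entry \eqref{eq:chi11}, where neither the resolvent gain nor any single $\delta_m$ control is individually sufficient. My plan is to write
\begin{equation*}
\|\langle v\rangle^m\chi K_{\zeta,{\bf n}}\chi f\|_{L^1}\leq \int\chi(u)\langle u\rangle^{-1}|f(u)|\,\mathcal{I}(u)\,du,\qquad \mathcal{I}(u):=\int\langle v\rangle^m\chi(v)|K(v,u)|\,dv,
\end{equation*}
where the factor $\langle u\rangle^{-1}$ is produced by \eqref{eq:ReasonCurve} applied to $(\nu+i{\bf n}\cdot u-\zeta)^{-1}\chi(u)$. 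The task is then to prove $\mathcal{I}(u)\lesssim[C(m)|{\bf n}|^{-1/10}+\delta_{m,0}+\delta_{m,2}]\langle u\rangle^{m+1}$ uniformly in $u$. I would split $\mathcal{I}$ at $|v|=m^{3/4}$ and estimate each of the three pieces $K_1,K_2,K_3$ separately using the explicit kernels from \eqref{eq:difK}. On the region $|v|\leq m^{3/4}$ the kernels are $|v-u|^{-1}$-singular but locally $L^p$-controlled by a constant $C(m)$, and a H\"older estimate against $\chi(v)\mathbf 1_{|v|\leq m^{3/4}}$, whose Lebesgue measure is at most $C(m)|{\bf n}|^{-1/4}$ (ball plus slab), produces the contribution $C(m)|{\bf n}|^{-1/10}\langle u\rangle^{m+1}$. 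On the region $|v|>m^{3/4}$ the cutoff $\chi$ is discarded and the Gaussian factor $e^{-|v|^2}$ in $K_1$ generates the $\delta_{m,2}\langle u\rangle$ bound (by the very definition \eqref{eq:defDeltaM2}), while the $K_2+K_3$ tail is absorbed by $\delta_{m,0}\langle u\rangle^{m+1}$ exactly as in the proof of \eqref{eq:geqgeq}. Summing the three pieces and integrating against $\chi(u)\langle u\rangle^{-1}|f(u)|$ yields \eqref{eq:chi11}.
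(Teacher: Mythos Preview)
Your treatment of \eqref{eq:KLarge} and \eqref{eq:estK3} is correct and matches the paper: the baseline bound \eqref{eq:ReasonCurve} plus \eqref{eq:mK1} gives \eqref{eq:KLarge}, and on $\mathrm{supp}(1-\chi)$ one has directly $|\nu+i{\bf n}\cdot v-\zeta|\geq |{\bf n}|^{3/4}|v|\geq |{\bf n}|^{1/4}\langle v\rangle$, so no interpolation is needed.

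Your argument for \eqref{eq:chi11} has a genuine gap in the tail piece. You split at $|v|=m^{3/4}$, discard $\chi$ on $\{|v|>m^{3/4}\}$, and then claim the $K_2+K_3$ contribution is bounded by $\delta_{m,0}\langle u\rangle^{m+1}$ ``exactly as in the proof of \eqref{eq:geqgeq}''. But $\delta_{m,0}$ is by definition the norm of $\chi_{>m}\langle v\rangle^m K_l\langle v\rangle^{-m-1}\chi_{>m}$, i.e.\ it requires cutoffs at scale $m$ on \emph{both} the $v$-side and the $u$-side. Your $\mathcal{I}(u)$ carries no restriction on $u$ at all, and your $v$-cutoff sits at $m^{3/4}<m$. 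For $|u|\leq m$ and $v$ in the intermediate zone $m^{3/4}<|v|\lesssim m$, neither $\delta_{m,0}$ nor any Gaussian tail bound applies, and once $\chi$ has been dropped the integral $\int_{|v|>m^{3/4}}\langle v\rangle^m|v-u|^{-1}e^{-|(v-u)\cdot v|^2/|v-u|^2}\,dv$ is not small uniformly in such $u$.

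The paper's fix is to split at $|v|=2m$ rather than $m^{3/4}$: on $\{|v|\leq 2m\}$ one keeps $\chi(v)$ and runs your H\"older/small-measure argument to produce $C(m)|{\bf n}|^{-1/10}$; on $\{|v|>2m\}$ one makes a \emph{second} split, now in $u$, at $|u|=m$. When $|u|>m$ the pair $(\chi_{>2m},\chi_{>m})$ lands inside the definition of $\delta_{m,0}$. When $|u|\leq m$ and $|v|>2m$ one has $|v|\geq 2|u|$, which forces the direction of $v-u$ to align with $v$ and hence $\tfrac{|(v-u)\cdot v|}{|v-u|}\geq\tfrac12|v|$; the $K_2+K_3$ kernel then carries its own factor $e^{-|v|^2/4}$ and is controlled by $\delta_{m,2}$, just like $K_1$. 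Thus the three contributions $C(m)|{\bf n}|^{-1/10}$, $\delta_{m,0}$, $\delta_{m,2}$ in \eqref{eq:chi11} correspond to the three regions $\{|v|\leq 2m\}$, $\{|v|>2m,\ |u|>m\}$, $\{|v|>2m,\ |u|\leq m\}$ of a double splitting, not to a single split in $|v|$.
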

The lemma will be proved in subsubsection \ref{subsub:LMK123}. 

We are ready to prove Proposition \ref{prop:largeN}.
\begin{proof}
The difficulty is caused by that an off-diagonal entry, namely $(1-\chi) K_{\zeta,{\bf{n}}} \chi$, is possibly large.

To prove the invertibility of $1+F$ we seek ideas in inverting a $2\times 2$ scalar matrix $\text{Id}+\tilde{F}$: suppose that $\tilde{F}$ takes the form $\tilde{F}=\left[
\begin{array}{ll}
f_{11} & f_{12}\\
f_{21} & f_{22}
\end{array}
\right]$ with $|f_{11}|,\ |f_{12}|, \ |f_{22}|\ll 1$ and $|f_{21}|\gg 1$. But if one has that $|f_{12}f_{21}|\ll 1$, then $\text{Id}+\tilde{F}$ is still invertible and by direct computation,
\begin{align}
\|(\text{Id}+\tilde{F})^{-1}\|\lesssim |f_{21}|.\label{eq:bdInv}
\end{align}

In the present situation, as shown in Lemma \ref{LM:K123}, three entries are small, and only the entry $f_{21}$ is large, but it satisfies the estimate 
\begin{align}
\|f_{12}\|_{L^1\rightarrow L^1}\|f_{21}\|_{L^1\rightarrow L^1}\ll 1.
\end{align}
By this we construct the inverse of $1+F$ by first diagonalizing the matrix, and then finding the bound on the inverse as in \eqref{eq:bdInv}. The process is easy but tedious. We omit the details here.

\end{proof}

\subsubsection{Proof of Lemma \ref{LM:K123}}\label{subsub:LMK123}
\begin{proof}

To prove \eqref{eq:estK3}, the definition of $K$ in \eqref{eq:difK2} makes
\begin{align}
K_{\zeta,{\bf{n}}}f= (K_1-K_2-K_3)(\nu+i{\bf{n}}\cdot v-\zeta)^{-1}f.
\end{align} 
To simplify the treatment, it suffices to prove a slightly more general estimate, for $l=1,2,3,$
\begin{align}
\| \langle v\rangle^{m } K_{l}(\nu+i{\bf{n}}\cdot v-\zeta)^{-1}\langle v\rangle^{-m } (1-\chi) f\|_{L^{1}}\lesssim \Upsilon_m |{\bf{n}}|^{-\frac{1}{4}} \|(1-\chi) f\|_{L^{1}}.
\end{align}

The key observation is that on the support of $1-\chi$, which is the region
\begin{align}
|v|\geq |{\bf{n}}|^{-\frac{1}{4}}\ \text{and}\ \frac{1}{|v||{\bf{n}}|}|{\bf{n}}\cdot v-\zeta_2|\geq |{\bf{n}}|^{-\frac{1}{2}},
\end{align} 
we have
\begin{align*}
|\nu+i{\bf{n}}\cdot v-\zeta |\geq |{\bf{n}}|^{\frac{1}{2}}|v|\geq |{\bf{n}}|^{\frac{1}{4}}(|v|^2+1)^{\frac{1}{2}}.
\end{align*} Here $\zeta_2$ is the purely imaginary part of $\zeta.$
Recall the constant $\Upsilon_m$ from \eqref{eq:mK1}.
Compute directly to obtain the desired result
\begin{align*}
\| \langle v\rangle^{m } K_{l}(\nu+i{\bf{n}}\cdot v-\zeta)^{-1} \langle v\rangle^{-m} (1-\chi) f\|_{L^{1}}\leq& |{\bf{n}}|^{-\frac{1}{4}}\Big\| \langle v\rangle^{m}K_{l}\langle v\rangle^{-1-m} (1-\chi) |f|\Big\|_{L^{1}}\\
\lesssim &\Upsilon_m |{\bf{n}}|^{-\frac{1}{4}}\|(1-\chi) f\|_{L^{1}}.
\end{align*}

\eqref{eq:KLarge} is a simply application of \eqref{eq:mK1}, \eqref{eq:difKsn} and \eqref{eq:ReasonCurve},
\begin{align}
\|\langle v\rangle^{m}(1-\chi) K_{\zeta,{\bf{n}}}\ \chi f\|_{L^1}\lesssim  \sum_{l=1}^{3} \Big\|\langle v\rangle^{m} K_{l}\langle v\rangle^{-1}\ \chi |f|\Big\|_{L^1}\lesssim\Upsilon_{m}\|\langle v\rangle^{m}\chi f\|_{L^1}.
\end{align}

Next we prove \eqref{eq:chi11}.

Decompose the operator $\chi K_{l} (\nu+i{\bf{n}}\cdot v-\zeta)^{-1}\chi,\ l=1,2,3,$ further, by inserting the identities $1=\chi_{\leq 2m}+\chi_{>2m}$ and $1=\chi_{\leq m}+\chi_{>m}$, into appropriate places, to find
\begin{align}
\chi K_{l}(\nu+i{\bf{n}}\cdot v-\zeta)^{-1}\chi
=&\chi_{\leq 2m} \chi K_{l}(\nu+i{\bf{n}}\cdot v-\zeta)^{-1}\chi+\chi_{>2m} \chi K_{l}(\nu+i{\bf{n}}\cdot v-\zeta)^{-1}\chi_{\leq m}\chi\nonumber\\
&+\chi_{>2m} \chi K_{l}(\nu+i{\bf{n}}\cdot v-\zeta)^{-1}\chi_{> m}\chi\label{eq:decomThree}
\end{align}
where $\chi_{\leq 2m}$, $\chi_{>2m}=1-\chi_{\leq 2m},$ $\chi_{>m},$ and $\chi_{\leq m}=1-\chi_{>m}$ are Heaviside functions defined in a similar way as that in \eqref{eq:cutoff2}.

For the first term on the right hand side, i.e. $\chi_{\leq 2m} \chi K_{l}(\nu+i{\bf{n}}\cdot v-\zeta)^{-1}\chi,\ l=1,2,3$, we use the fact that $|(\nu+i{\bf{n}}\cdot v-\zeta)^{-1}|\lesssim \langle v\rangle^{-1}\lesssim 1$ to find that, for any function $f,$
\begin{align}
\|\langle v\rangle^{m}\chi_{\leq 2m} \chi K_{l}(\nu+i{\bf{n}}\cdot v-\zeta)^{-1}\chi \langle v\rangle^{-m} f\|_{L^1}\lesssim  \Big\|\langle v\rangle^{m}\chi_{\leq 2m} \chi K_{l} \chi  \langle v\rangle^{-m}|f|\Big\|_{L^1}.\label{eq:prelimEst}
\end{align}

Now we claim that, there exists some constant $C(m)>0$, such that for  $l=1,2,3,$
\begin{align}
\Big\|\langle v\rangle^{m}\chi_{\leq 2m} \chi K_{l} \chi  \langle v\rangle^{-m}|f|\Big\|_{L^1}\leq \Big\|\langle v\rangle^{m}\chi_{\leq 2m} \chi K_{l}\chi  |f|\Big\|_{L^1}\leq  C(m)|{\bf{n}}|^{-\frac{1}{10}} \|\chi  f\|_{L^1}.\label{eq:finite}
\end{align}
This implies the desired estimate \eqref{eq:chi11} for the considered part.

Now we prove the claim. The firs step is trivial. For the second and for $l=2,3,$ use the integral kernel for $K_2+K_3$ in \eqref{eq:difK} to find
\begin{align}
\sum_{l=2,3}\Big\|\langle v\rangle^{m}\chi_{\leq 2m} \chi K_l  \chi |f| \Big\|_{L^1}
= \Big\|\langle v\rangle^{m}\chi_{\leq 2m} \chi [K_{2}+K_3]\chi  |f|\Big\|_{L^1}
\lesssim \max_{u} \int_{\Omega} \langle v\rangle^{m} |v-u|^{-1} dv^3\ \|\chi f\|_{L^1},\label{eq:finite2}
\end{align} where the set $\Omega=\Omega_1\cup \Omega_2\subset \mathbb{R}^3$ is defined as
\begin{align}
\begin{split}
\Omega_1:=&\Big\{ v\in \mathbb{R}^3\ \Big| \ |v|\leq |{\bf{n}}|^{-\frac{1}{4}}\Big\},\\
\Omega_2:=&\Big\{ v\in \mathbb{R}^3\ \Big| \ |v|\leq 2m,\ \frac{1}{|v||{\bf{n}}|}|{\bf{n}}\cdot v-\zeta_2|\leq |{\bf{n}}|^{-\frac{1}{4}},\ |v|\geq |{\bf{n}}|^{-\frac{1}{4}}\Big\}.
\end{split}
\end{align} Observe that the larger value of $|\bf{n}|$, the smaller the volume of the set $\Omega$ becomes. Next we exploit this. It is easy to control the integration in the region $\Omega_1$ since for any $u\in \mathbb{R}^3$
\begin{align}
\int_{|v|\leq {\bf{n}}^{-\frac{1}{4}}}|v-u|^{-1}\ dv\leq |{\bf{n}}|^{-\frac{1}{4}}.
\end{align}

For $\Omega_2$, compute directly to find that, 
\begin{align}
\max_{u} \int_{\Omega_2} \langle v\rangle^{m} |v-u|^{-1} dv^3 \lesssim &(1+2m)^{m} \Big[\int_{|v-u|\leq |\bf{n}|^{-\frac{1}{10}}} |v-u|^{-1} d^3v+ |{\bf{n}}|^{\frac{1}{10}} \int_{\Omega} d^3v\Big] \nonumber\\
=&(1+2m)^{m} \Big[\int_{|v|\leq |\bf{n}|^{-\frac{1}{10}}} |v|^{-1} d^3v+ |{\bf{n}}|^{\frac{1}{10}} \int_{\Omega} d^3v\Big].
\end{align}
The first term is easy to be controlled, for some constant $C_1(m)>0$,
\begin{align}
(1+2m)^{m} \int_{|v|\leq |\bf{n}|^{-\frac{1}{10}}} |v|^{-1} d^3v\leq C_1(m)  |\bf{n}|^{-\frac{1}{10}}.\label{eq:c1m}
\end{align} 
For the second one, we need to evaluate the integral $\int_{\mathcal{R}} d^3v.$ The key is to show the set $\Omega$ becomes smaller, as $|\bf{n}|$ increases. Without losing generality, assume that ${\bf{n}}=(|{\bf{n}}|,0,0)$. Then the condition $\frac{1}{|v||{\bf{n}}|}|{\bf{n}}\cdot v-\zeta_2|\leq |{\bf{n}}|^{-\frac{1}{4}}$ becomes
\begin{align*}
\Big|(1,0,0)\cdot \frac{v}{|v|}-\frac{\zeta_{2}}{|v||{\bf{n}}|}\Big|\leq |{\bf{n}}|^{-\frac{1}{4}}.
\end{align*}
Thus, for each fixed $|v|$, the set of vectors $\frac{v}{|v|}\in \mathbb{S}^2$ can only be a subset of $\mathbb{S}^2$ with area $\mathcal{O}(|{\bf{n}}|^{-\frac{1}{4}}).$ From here we integrate in polar coordinate to find that, for some constant $C_2(m)$,
\begin{align}
(1+2m)^{m}  |{\bf{n}}|^{\frac{1}{10}} \int_{\Omega} d^3v\leq C_2(m)  |\bf{n}|^{-\frac{1}{10}}.
\end{align}

This, together with \eqref{eq:finite2}-\eqref{eq:c1m}, implies the desired \eqref{eq:finite} for $K_{2}-$ and $K_{3}-$terms.

It is easier to estimate the $K_1-$term, rendered by the factor $e^{-|v|^2}$ in its integral kernel. We skip the details here.

For the third term in \eqref{eq:decomThree}, we apply Lemma \ref{LM:Control} to find the desired estimate
\begin{align}
\begin{split}
\|\chi_{>2m}\langle v\rangle^m \chi K_{l}(\nu+i{\bf{n}}\cdot v-\zeta)^{-1}\langle v\rangle^{-m} \chi\ \chi_{> m}\|_{L^1\rightarrow L^1}\leq &\|\chi_{>2m}\langle v\rangle^m K_{l}\langle v\rangle^{-m-1}  \chi_{> m}\|_{L^1\rightarrow L^1}\\
\leq &\delta_{m,0}.
\end{split}
\end{align}

Turning to the second term in \eqref{eq:decomThree}, we apply \eqref{eq:ReasonCurve2} to find, for any function $f$, $l=1,2,3$
\begin{align}
\Big\|\chi_{>2m}\chi \langle v\rangle^m K_{l}(\nu+i{\bf{n}}\cdot v-\zeta)^{-1}\langle v\rangle^{-m} \chi\ \chi_{\leq m} f\Big\|_{L^1}\leq \Big\|\chi_{>2m} \langle v\rangle^{m}K_{l}  \langle v\rangle^{-m-1}\chi_{\leq m}\chi |f|\Big\|_{L^1}.
\end{align}
Among the three terms,  $K_1$-term is the easiest, the factor $e^{-|v|^2}$ in its integral kernel makes the integral small,
\begin{align}\label{eq:k1}
\begin{split}
 \|\chi_{>2m}\langle v\rangle^{m}  K_{1}\langle v\rangle^{-m-1} \chi_{\leq m}\chi f\|_{L^1}
 \lesssim &\int_{|v|\geq 2m} \langle v\rangle^{m+1} e^{-|v|^2}\ d^3v\ \|\chi_{\leq m} \chi f\|_{L^1}\\
 \leq &\delta_{m,1}\|\chi_{\leq m} \chi f\|_{L^1} 
 \end{split}
\end{align} where the small constant $\delta_{m,1}$ is defined in \eqref{eq:mInte}.

We estimate the $K_2-$ and $K_3-$terms together. Use the integral kernel of $K_2+K_3$ to find
\begin{align}
\begin{split}\label{eq:k2k3}
&\sum_{l=2,3} \Big\|\chi_{>2m}\langle v\rangle^{m}  K_{l}\langle v\rangle^{-m-1} \chi_{\leq m} \chi |f|\Big\|_{L^1}\\
=& \Big\|\chi_{>2m}\langle v\rangle^{m} [ K_{2}+K_3]\langle v\rangle^{-m-1} \chi_{\leq m} \chi |f|\Big\|_{L^1}\\
=&  \int_{|v|\geq 2m,\ |u|\leq m}\langle v\rangle^{m}  K(u,v)\langle u\rangle^{-m-1}  \chi(u) |f|(u)\ d^3u d^3v
\end{split}
\end{align}
where by \eqref{eq:difK}, $K(u,v)$ takes the form $$K(u,v):=2\pi  |u-v|^{-1} e^{ -\frac{|(u-v)\cdot v|^2}{|u-v|^2}}.$$ The facts that $|v|\geq 2m$ and $|u|\leq m$ imply that the direction of the unit vector
$\frac{u-v}{|u-v|}$ is controlled by $v$, a detailed analysis yields
\begin{align*}
\frac{|(u-v)\cdot v|^2}{|u-v|^2}\geq \frac{1}{4}|v|^2.
\end{align*}
This together with estimate $|u-v|^{-1}\leq 2|v|^{-1}$ implies that
\begin{align}
K(u,v)\lesssim |v|^{-1} e^{-\frac{1}{4}|v|^2}.
\end{align}
Put this back into \eqref{eq:k2k3}, and recall the definition of $\delta_{m,1}$ in \eqref{eq:defDeltaM2}, to obtain the desired estimate
\begin{align}
\begin{split}
\sum_{l=2,3} \Big\|\chi_{>2m}\langle v\rangle^{m}  K_{l}\langle v\rangle^{-m-1} \chi_{\leq m}\chi |f|\Big\|_{L^1}\lesssim &\int_{|v|\geq 2m} \langle v\rangle^{m} |v|^{-1} e^{-\frac{1}{4}|v|^2}\ dv^3 \|\chi f\|_{L^1}\\
\leq &\delta_{m,1} \|\chi f\|_{L^1}.
\end{split}
\end{align}

This together with \eqref{eq:k1} completes the estimate for the second term in \eqref{eq:decomThree}.

Since we hav found all the terms in \eqref{eq:decomThree} satisfied the desired estimate, the proof of \eqref{eq:chi11} is complete.
\end{proof}




\subsection{Proof of Proposition \ref{prop:compact}}\label{sec:matchingArgu}
\begin{proof}
In what follows we take some ideas and constructions in \cite{Mouhot2006, Mouhot2010}. However the arguments here are largely self-contained, and are more direct.

In the next we use the construction of (right) inverse operator for $\langle v\rangle^{m}(L_{\bf{n}}-\lambda) \langle v\rangle^{-m}$ in Proposition 4.1 of \cite{Mouhot2006}.
Define a Heaviside function $\Theta_{\delta}=\Theta_{\delta}(v, u)$ as
\begin{align}\label{eq:cutoff20}
\Theta_{\delta}(v,u)= 
\left[
\begin{array}{ll}
1\ &\text{if}\ \sqrt{|v|^2+|u|^2}\leq \delta^{-1},\\
0 &\ \text{otherwise}.
\end{array}\right.
\end{align} Here $\delta$ is a small constant chosen to satisfy 
\begin{align}
\frac{\delta^{-1}}{m^{10}}\gg 1,\ \text{and} \ m\gg 1.
\end{align}
Recall the definitions of the operators $K_{l},\ l=1,2,3$ from \eqref{eq:difK2}, we define $K_{\delta}$ by inserting the cutoff function
\begin{align}
K_{\delta}(f)
:=& M(v) \int_{\mathbb{R}^{3}\times \mathbb{S}^2}\Theta_{\delta}(v, u)\ |(u-v)\cdot\omega|f(u)\ d^3 ud^2 \omega\nonumber\\
& - \int_{\mathbb{R}^3\times\mathbb{S}^2 }\Theta_{\delta}(v, u) \ |(u-v)\cdot\omega |M(u') f(v')\ d^3 ud^2 \omega\nonumber\\
& -\int_{\mathbb{R}^3 \times \mathbb{S}^2}\Theta_{\delta}(v, u)\ |(u-v)\cdot\omega| M(v') f(u')\ d^3 ud^2 \omega\nonumber.
\end{align} Here recall that we only consider the case $T=\frac{1}{2},\ \mu=0$, and denote $M=M_{\frac{1}{2},0},$ see \eqref{eq:abbre}.

Decompose the operator $\langle v\rangle^{m}(L_{\bf{n}}-\lambda) \langle v\rangle^{-m}$ into two parts,
$$\langle v\rangle^{m}(L_{\bf{n}}-\lambda) \langle v\rangle^{-m}=B_{\delta}+A_{\delta}$$ with $B_{\delta}:=\nu+i{\bf{n}}\cdot v -\lambda+\langle v\rangle^{m}[K-K_\delta]\langle v\rangle^{-m}$, and $A_{\delta}:=\langle v\rangle^{m}K_{\delta}\langle v\rangle^{-m}.$
The construction of the inverse of $\langle v\rangle^{m}(L_{\bf{n}}-\lambda)\langle v\rangle^{-m}:\ L^1\rightarrow L^1$ is taken from \cite{Mouhot2006},
\begin{align}
&[\langle v\rangle^{m}(L_{\bf{n}}-\lambda)\langle v\rangle^{-m}] \times\nonumber\\
& \Big[1-\langle v\rangle^{m}M^{\frac{1}{2}} [M^{-\frac{1}{2}}(L_{\bf{n}}-\lambda)M^{\frac{1}{2}}]^{-1} \langle v\rangle^{-m} M^{-\frac{1}{2}}\ \big[\langle v\rangle^{m}A_{\delta}\langle v\rangle^{-m}\big]\Big] [\langle v\rangle^{m} B_{\delta}\langle v\rangle^{-m}]^{-1}=Id.
\end{align}

Hence if the operator $$\Big[1-\langle v\rangle^{m}M^{\frac{1}{2}} \big[M^{-\frac{1}{2}}(L_{\bf{n}}-\lambda)M^{\frac{1}{2}}\big]^{-1} \langle v\rangle^{-m} M^{-\frac{1}{2}}\ \big[\langle v\rangle^{m}A_{\delta}\langle v\rangle^{-m}\big] 
\Big] \ \Big[\langle v\rangle^{m} B_{\delta}\langle v\rangle^{-m}\Big]^{-1}: L^1(\mathbb{R}^3)\rightarrow L^{1}(\mathbb{R}^3)$$ is well defined, then $\langle v\rangle^{m}(L_{\bf{n}}-\lambda)\langle v\rangle^{-m}$ is invertible, which directly implies the desired result.
To verify the operator is well defined, we use the following facts:
\begin{itemize}
\item[(1)] the mapping $\langle v\rangle^{m}M^{\frac{1}{2}}:\ L^2\rightarrow L^1$ is obviously well defined since $M$ decays rapidly fast, 
\item[(2)]the mapping $[M^{-\frac{1}{2}}(L_{\bf{n}}-\lambda)M^{\frac{1}{2}}]^{-1}:\ L^2\rightarrow L^2$ is well defined since $-\lambda$ is not an eigenvalue of $L_{\bf{n}},$ see \cite{YGuo2002, YGuo2003}. Recall that ${\bf{n}}\not=(0,0,0),$
and recall that, in certain sense, the eigenvector space of $L_{\bf{n}}$ is a subset of that of $L$, see Lemma \ref{LM:subspace}.
\item[(3)] the mapping $M^{-\frac{1}{2}}A_{\delta}\langle v\rangle^{-m}$ can be considered in the setting 
\begin{align}\label{eq:setting}
M^{-\frac{1}{2}}A_{\delta}\langle v\rangle^{-m}: \ L^1\rightarrow L^2
\end{align} by using that $K_{\delta}h$ is ``compactly supported" by the definition of cutoff function \eqref{eq:cutoff20}. 
\item[(4)]
To show that $\langle v\rangle^{m} B_{\delta}\langle v\rangle^{-m}: L^1\rightarrow L^1$ is invertible for large $m$, we use the identity
\begin{align}
 \langle v\rangle^{m} B_{\delta}\langle v\rangle^{-m}= \Big(1+\langle v\rangle^{m}\Big(K-K_{\delta}\Big)(\nu+iv\cdot {\bf{n}}-\lambda)^{-1}\langle v\rangle^{-m}\Big)\Big(\nu+iv\cdot {\bf{n}}-\lambda\Big).\label{eq:twofactors}
\end{align} It is easy to see that the second factor is invertible by the estimate 
\begin{align}
|(\nu(v)+iv\cdot {\bf{n}}-\lambda)^{-1}|\leq C (1+|v|)^{-1}, \label{eq:uppper}
\end{align} which is from \eqref{eq:ReasonCurve2}.

For the first factor we show that $\langle v\rangle^{m}\Big(K-K_{\delta}\Big)(\nu+iv\cdot {\bf{n}}-\lambda)^{-1}\langle v\rangle^{-m}: \ L^1\rightarrow L^1$ is small by the following result:
\begin{lemma}\label{LM:firFactor}
 Suppose that $m$ is sufficiently large and $ m^{-10}\delta^{-1}\gg 1.$
 Then we have that for any function $f$,
 \begin{align}
  \|\langle v\rangle^{m} \Big(K-K_{\delta}\Big) \Big(\nu+i\cdot {\bf{n}}-\lambda\Big)^{-1} \langle v\rangle^{-m} f\|_{L^1}\leq \frac{1}{4}\|f\|_{L^1}.
 \end{align}

\end{lemma}
The lemma will be proved in Subsubsection \ref{subsec:LMFF} below.

Suppose that the lemma holds, then the first factor in \eqref{eq:twofactors} is invertible. This implies that $\langle v\rangle^{m} B_{\delta}\langle v\rangle^{-m}: L^1\rightarrow L^1$ is invertible.


\end{itemize}

The proof is complete.
\end{proof}

\subsubsection{Proof of Lemma \ref{LM:firFactor}}\label{subsec:LMFF}
We choose to study one term in $K-K_{\delta}$, which is
\begin{align}\label{eq:defD2}
D_2 f:=\langle v\rangle^{m}\int_{\mathbb{R}^3\times\mathbb{S}^2 }[1-\Theta_{\delta}(v, u)] \ |(u-v)\cdot\omega |M(u') \Big(\nu(v')+iv'\cdot {\bf{n}}-\lambda\Big)^{-1} \langle v'\rangle^{-m} f(v')\ d^3 ud^2 \omega.
\end{align}
Its $L^1$ norm satisfies the estimate, using \eqref{eq:uppper},
\begin{align}
\|D_2 f\|_{L^1}\lesssim \int_{\mathbb{S}^2}\int_{|u|^2+|v|^2\geq \delta^{-2} }\langle v\rangle^{m} |(u-v)\cdot \omega| e^{-|u'|^2} \langle v' \rangle^{-m-1} |f|(v') \ d^3ud^3v d^2\omega.
\end{align}

Now we change variables. Recall that $u', v'\in \mathbb{R}^{3}$ are given by $$u':=u-[(u-v)\cdot \omega]\omega,\ v':=v+[(u-v)\cdot\omega]\omega.$$ This makes
\begin{align}
u:=u'-[(u'-v')\cdot \omega] \omega,\ v=v'+[(u'-v')\cdot \omega] \omega.
\end{align} and $|u|^2+|v|^2=|u'|^2+|v'|^2$, and $|(u-v)\cdot\omega|=|(u'-v')\cdot \omega|$, and hence change variables $(u,v)\rightarrow (u',v')$ to find,
\begin{align}
\begin{split}
\|D_2 f\|_{L^1}\lesssim &\int_{|u'|^2+|v'|^2\geq \delta^{-2} }\langle v\rangle^{m} |(u'-v')\cdot \omega| e^{-|u'|^2} \langle v' \rangle^{-m-1} |f|(v') \ d^3u'd^3v' d^2\omega\\
=&\int_{|u|^2+|v|^2\geq \delta^{-2} }\langle v'\rangle^{m} |(u-v)\cdot \omega| e^{-|u|^2} \langle v \rangle^{-m-1} |f|(v) \ d^3ud^3v d^2\omega.
\end{split}
\end{align}
Let $U_{\omega}$ be the rotation in \eqref{eq:rota}, which makes $U_{\omega}^* \omega=(1,0,0)^{T}$. Then we rotate both $u$ and $v$ to have
\begin{align}\label{eq:manStep}
\|D_2 f\|_{L^1}\lesssim \int_{|u|^2+|v|^2\geq \delta^{-2} }(1+u_1^2+v_2^2+v_3^2)^{\frac{m}{2}} |u_1-v_1| e^{-|u|^2} \langle v \rangle^{-m-1} \Big[ \int_{\mathbb{S}^2}|f(U_{\omega}v)|\ d^2\omega\Big] d^3u d^3 v .
\end{align}

From here we adopt the same strategy as estimating the term in \eqref{eq:smallIn2}, and the present problem is easier since we have the condition $m^{-10}\delta^{-1}\gg 1$. In the polar coordinate, $v$ is of the form 
\begin{align}
v_1=r\cos\alpha,\ v_2=r\sin\alpha \cos\beta,\ v_3=r\sin\alpha \sin\beta,
\end{align} with $r\geq 0,$ $\alpha\in [0,\pi]$ and $\beta\in [0,\ 2\pi].$
Compute directly to find
\begin{align}
\|D_2 f\|_{L^1} \lesssim \|f\|_{L^1}\tilde{D}_2\label{eq:D2f}
\end{align}
 with $\tilde{D}_2$ defined as
\begin{align}\label{eq:tildD}
\tilde{D}_2=\sup_{r\geq 0,\ \alpha}\Big[(1+r^{2})^{-\frac{m+1}{2}}\int_{|u|^2+r^2\geq \delta^{-2} } (1+u_1^2+r^2\sin^2\alpha)^{\frac{m}{2}} \Big[|u_1|+r|\cos\alpha|\Big] e^{-|u|^2}\ d^3 u \Big].
\end{align}

Next we discuss three regions separately, specifically $$\Big\{ u\ \Big|\ |u|\geq \delta^{-\frac{1}{4}}\Big\}, \ \Big\{(u,v)\ \Big|\ |u|< \delta^{-\frac{1}{4}},\ r\sin\alpha\leq \delta^{-\frac{3}{4}}\Big\},\ \text{and}\  
\Big\{(u,v)\ \Big|\ |u|< \delta^{-\frac{1}{4}},\ r\sin\alpha> \delta^{-\frac{3}{4}}\Big\}.$$

The first and second ones are easy. By the rapid decay of $e^{-|u|^2}$, it is easy to see that in the region $|u|\geq \delta^{-\frac{1}{4}}$, the integral is of order $\delta^{10}.$
For the second, we observe that
\begin{align}
\frac{1+u_1^2+r^2\sin^2\alpha}{1+r^2}\leq \delta^{\frac{1}{2}},
\end{align} from here
compute directly to find that the integral is of order $\delta^{\frac{1}{2}}.$

For the third one, we use $m^{-10}\delta^{-1}\gg 1$ to find
\begin{align}
(1+u_1^2+r^2\sin^2\alpha)^{\frac{m}{2}}(|u_1|+r|\cos\alpha|)\leq 5 r^{m+1}\sin^{m}\alpha (\delta^{\frac{1}{4}}+|\cos\alpha|). 
\end{align}
Put this back into \eqref{eq:tildD} and use that
\begin{align}
\gamma(m):=\sup_{\alpha}\big|\sin^{m}\alpha\ \cos\alpha\big|\rightarrow 0,\ \text{as}\ m\rightarrow \infty
\end{align} to find that the integral is small.

Collect the estimates above to have that
\begin{align}
\tilde{D}_2\lesssim  \gamma(m)+\delta^{\frac{1}{4}}.
\end{align}

This together with \eqref{eq:D2f} implies the desired estimate for $D_2$ of \eqref{eq:defD2}.

The other parts in $K-K_{\delta}$ can be controlled similarly, together with the techniques used in proving Lemma \ref{LM:Control}, hence we skip the details here.


\appendix
\section{Proof of Lemma \ref{LM:EstNonline}}\label{sec:nonlinear}
To derive \eqref{eq:lowB} and \eqref{eq:globalLower}, we recall the definitions of $\nu_{T,\mu}$ in \eqref{eq:difNu}
\begin{align}
\nu_{T,\mu}(v):=\int_{\mathbb{R}^3\times \mathbb{S}^2}|(u-v)\cdot\omega| M_{T,\mu}(u)\ d^3u d^2 \omega.
\end{align} The $\omega-$integral has a closed form, 
\begin{align}
 \nu_{T,\mu}(v)=4\pi \int_{\mathbb{R}^3}|u-v|M_{T,\mu}(u)\ d^3u.
\end{align} 
From here we compute directly to obtain, for some $C>0,$
\begin{align}
  \nu_{T,\mu}(v)\geq C (1+|v|)
\end{align} which implies the desired \eqref{eq:lowB} and \eqref{eq:globalLower}.
For a more general consideration, see \cite{BoGamPan04}. 

For \eqref{eq:estNonL}, we start with proving that, for any functions $f,\ g:\ \mathbb{R}^3\rightarrow \mathbb{C}$ we have
\begin{align}
\|\langle v\rangle^{m} Q(f,g)\|_{L^{1}(\mathbb{R}^{3})}
\leq &C_{m} [\|f\|_{L^{1}(\mathbb{R}^{3})}\|\langle v\rangle^{m+1}g\|_{L^{1}(\mathbb{R}^{3})}+ \|\langle v\rangle^{m+1}f\|_{L^{1}(\mathbb{R}^{3})}\|g\|_{L^{1}(\mathbb{R}^{3})}].\label{eq:EForm}
\end{align}

A key observation in proving the estimate is that for any fixed $\omega\in \mathbb{S}^{2},$ the mapping from $(u,v)\in \mathbb{R}^{6}$ to $(u',v')\in \mathbb{R}^{6}$ is a linear symplectic transformation, hence
\begin{equation}
d^3u d^3 v=d^3 u' d^3 v'
\end{equation} where, $u'$ and $v'$ are defined \eqref{eq:NLBL1}. This together with the observation that
\begin{equation}
\langle v\rangle^{m}\leq c(m)\big[ \langle u'\rangle^{m}+\langle v'\rangle^{m}\big],\ \text{and}\ |(u-v)\cdot \omega|\leq |u'|+|v'|
\end{equation} obviously implies \eqref{eq:EForm}, and hence the desired \eqref{eq:estNonL}.

As one can infer from the definition $K$ in \eqref{eq:difK2}, \eqref{eq:mK1} is a special case of \eqref{eq:estNonL} by setting $f$ or $g$ to be $M_{T,\mu}.$
\begin{flushright}
$\square$
\end{flushright}



\section{The local wellposedness of the linear equation}\label{sec:Local}

Here we study the local wellposedness, in weighted $L^{1}-$space, of the linear problem
\begin{align}
\begin{split}\label{local}
\partial_{t}g=&[-\nu-i{\bf{n}}\cdot v-K]g,\\
g(v,0)=&g_0(v).
\end{split}
\end{align}

Recall that we study the solution in the space $$\langle v\rangle^{-m}L^1(\mathbb{R}^3):=\Big\{f\ \Big|\ \|\langle v\rangle^{m}f\|_{L^1}<\infty\Big\},$$ with $m$ sufficiently large. 

The main result is:
\begin{lemma}\label{LM:wellpose}
Suppose that $m>0$ is large enough, then the equation \eqref{local} has a unique solution for any given $g_{0}\in \langle v\rangle^{-m}L^1(\mathbb{R}^3)$. Moreover, for any $t\geq 0,$ there exists a positive function $X$, independent of $\bf{n}$, such that
\begin{align}
\|\langle v\rangle^{m}g(\cdot,t)\|_{L^1}\leq X(t) \|\langle v\rangle^{m}g_0\|_{L^1}.
\end{align}
\end{lemma}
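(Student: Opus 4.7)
The plan is to view \eqref{local} as a Duhamel perturbation of the multiplicative semigroup generated by $-A:=-(\nu+i{\bf n}\cdot v)$, and to tame the unboundedness of $K$ on $\langle v\rangle^{-m}L^1$ by means of the dissipative smoothing of $e^{-tA}$. Because $|e^{-tA(v)}|=e^{-t\nu(v)}\le 1$, the semigroup $e^{-tA}$ is a contraction on each $\langle v\rangle^{-m'}L^1$ uniformly in ${\bf n}$, and since $\nu(v)\asymp \langle v\rangle$ (Lemma~\ref{LM:EstNonline}), the elementary inequality $\sup_{a\ge 0}a^\alpha e^{-ta}\lesssim t^{-\alpha}$ yields the smoothing bound
\[
\|\langle v\rangle^{m+\alpha}e^{-tA}f\|_{L^1}\le C_\alpha t^{-\alpha}\|\langle v\rangle^m f\|_{L^1},\qquad \alpha\in[0,1).
\]
Combined with the $+1$-weight estimate for $K$ from Lemma~\ref{LM:EstNonline}, this gives the key perturbation bound
\[
\|\langle v\rangle^m e^{-(t-s)A}Kf\|_{L^1}\le \frac{C}{(t-s)^{1-\delta}}\|\langle v\rangle^{m+\delta}f\|_{L^1}
\]
for any $\delta\in(0,1)$, with integrable time singularity and constants that do not see ${\bf n}$.

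Since $K$ is genuinely unbounded on $\langle v\rangle^{-m}L^1$, I would not try to apply a fixed-point theorem to the mild formulation directly; instead I would first regularize. Setting $K_R f:=\chi_{|v|\le R}K(\chi_{|v|\le R}f)$, the truncated $K_R$ is bounded on $\langle v\rangle^{-m}L^1$ (operator norm $\lesssim \Upsilon_m\langle R\rangle$), so standard bounded-perturbation theory for $C_0$-semigroups yields a unique $g_R\in C([0,\infty);\langle v\rangle^{-m}L^1)$ solving $\partial_t g_R=-(A+K_R)g_R$, $g_R(0)=g_0$. The pointwise domination $|K_R(v,u)|\le|K(v,u)|$ ensures that every weighted estimate of $K$ from Lemma~\ref{LM:EstNonline} is inherited by $K_R$ with constants independent of $R$. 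I would then iterate the truncated Duhamel formula $N$ times, producing a sum of explicit terms — each controllable via Beta-function integrals generated by the smoothing bound above — plus a remainder whose crude norm is $\le (T\|K_R\|)^N/N!$ on $[0,T]$; choosing $N$ large and $T$ moderate handles the remainder and yields
\[
\|\langle v\rangle^m g_R(t)\|_{L^1}\le X(t)\|\langle v\rangle^m g_0\|_{L^1}
\]
with $X$ depending only on $m$, independent of both $R$ and ${\bf n}$.

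Finally I would pass $R\to\infty$ using this uniform bound and equicontinuity inherited from the mild form to extract a limit mild solution $g\in C([0,\infty);\langle v\rangle^{-m}L^1)$ of \eqref{local}; uniqueness of such mild solutions follows by applying the same smoothing-perturbation estimate to the difference of two solutions and closing a Gronwall argument through finitely many weight levels. The main obstacle, as highlighted throughout the paper, is precisely the unboundedness of $K$ on $\langle v\rangle^{-m}L^1$: a naive Picard iteration does not close because each application of $K$ costs a full unit of weight while one use of the $\nu$-smoothing gains only a fractional $\alpha<1$, so the bookkeeping must be carried out over several iterations and the resulting chain of integrable time singularities summed via the Beta-function identity — the $\nu$-smoothing is exactly what makes these sums convergent and all constants ${\bf n}$-independent.
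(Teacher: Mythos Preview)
Your fractional-smoothing idea is natural and the perturbation bound
\[
\|\langle v\rangle^m e^{-(t-s)A}Kf\|_{L^1}\le C(t-s)^{-(1-\delta)}\|\langle v\rangle^{m+\delta}f\|_{L^1}
\]
is correct, but the scheme does not close to give an $R$-independent $X(t)$. In the $k$-th Dyson term you have $k$ copies of $K$ (each costing one full unit of weight) and $k{+}1$ propagators; to land back at weight $m$ on $g_0$ you are forced to take smoothing exponents with $\sum_{j=0}^{k}\alpha_j\ge k$ and each $\alpha_j<1$. The resulting simplex integral equals $\prod_{j}\Gamma(1-\alpha_j)/\Gamma(k+1-\sum\alpha_j)$, and minimizing $\prod\Gamma(1-\alpha_j)$ under these constraints (convexity of $\log\Gamma$) gives $\Gamma(1/(k{+}1))^{k+1}\sim (k{+}1)^{k+1}$. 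So your explicit-term constants $C_k$ blow up super-exponentially and $\sum_k C_k=\infty$. Since making the remainder $\le\tfrac12$ forces $N=N(R)\to\infty$ as $R\to\infty$, the bound $2\sum_{k<N(R)}C_k$ is not uniform in $R$; the same weight-leak obstructs the Gronwall step for uniqueness, which at each application needs control at weight $m+\delta$, not $m$.

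The paper avoids this by \emph{not} using pointwise fractional smoothing. It runs a contraction argument in a norm that already contains the time-integrated quantity $\int_0^s\|\langle v\rangle^{m+1}g\|_{L^1}$, split over $\{|v|\le m\}$ and $\{|v|>m\}$ with weights $\Phi^2$ and $\Phi$. The crucial gain of the \emph{full} $\langle v\rangle$ factor comes from integrating first and writing $\int_0^s e^{-(s-s_1)\nu}\,ds_1=\nu^{-1}(1-e^{-s\nu})$, so that $\langle v\rangle\nu^{-1}\lesssim 1$ with no time singularity; on $\{|v|\le m\}$ the factor $(1-e^{-t\nu})$ is small for small $t$, while on $\{|v|>m\}$ the structural smallness $\delta_{m,0}\to 0$ from Lemma~\ref{LM:Control} (which genuinely needs $m$ large, explaining the hypothesis) provides the contraction. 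If you want to keep your regularize-and-pass-to-the-limit framework, replace the fractional smoothing by this integrated $\nu^{-1}$ trick together with the $\chi_{\le m}/\chi_{>m}$ splitting; then a single Duhamel step already contracts and no infinite Dyson sum is needed.
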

\begin{proof}
We start with casting the equation into a convenient form by
applying Duhamel's principle
\begin{align}
g=e^{(-\nu-i{\bf{n}}\cdot v)t}g_0-\int_{0}^{t} e^{(-\nu-i{\bf{n}}\cdot v)(t-s)} K g(s)\ ds.\label{eq:durh}
\end{align}

We start with simplifying the problem.
\begin{itemize}
\item[(1)]
Since the equation is linear, it suffices to prove the existence of solutions in a small time interval. 
\item[(2)]
All the estimates made on the terms on the right hand side of \eqref{eq:durh} will be based on \eqref{eq:fix1} and \eqref{eq:fix2} below, which do not depend on $\bf{n}$. Thus the estimates are ``uniform in $\bf{n}$".
\end{itemize}

The main tool will be the fixed point theorem. To make it applicable we define a Banach space.

We define the norm, for any function $g:\mathbb{R}^3\times \mathbb{R}^{+}\rightarrow \mathbb{C}$, for any $\tau\geq 0,$
\begin{align}
\|g\|_{\tau}:=\max_{s\in [0,\tau]}\Big[\|\langle v\rangle^{m} g(s)\|_{L^1}+\Phi^2\int_{0}^{s}\|\chi_{\leq m}\langle v\rangle^{m+1}g(s_1)\|_{L^1}\ ds_1+\Phi\int_{0}^{s}\|\chi_{> m}\langle v\rangle^{m+1}g(s_1)\|_{L^1}\ ds_1\Big].\label{eq:normWell}
\end{align}
where $\Phi$ is large constant to be chosen later. The ideas in choosing the norm above are motivated directly by those used in \cite{YGuo2003, YGuo2002}, for a different approach see the application of Lumer-Philipps Theorem in \cite{Mouhot2010}.

In the chosen Banach space, the following two results make the fixed point theorem applicable, hence establish the desired result Lemma \ref{LM:wellpose}:
\begin{itemize}
\item[(A)]
for any $\tau>0$, and $g_0$ satisfying $\|\langle v\rangle^{m}g_0\|_{L^1}< \infty$, we have
\begin{align}
\|e^{-\nu t}|g_0|\|_{\tau}<\infty,\label{eq:fix1}
\end{align}
\item[(B)]
if $\tau>0$ is sufficiently small, then the linear mapping $\int_{0}^{t} e^{(-\nu-i{\bf{n}}\cdot v)(t-s)} K g(s)\ ds$ is contractive,
\begin{align}
\|\int_{0}^{t} e^{(-\nu-i{\bf{n}}\cdot v)(t-s)} K g(s)\ ds\|_{\tau}\leq \frac{3}{4}\|g\|_{\tau}.\label{eq:fix2}
\end{align} 
\end{itemize}

To complete the proof, we need to prove the two key estimates \eqref{eq:fix1} and \eqref{eq:fix2}.

We start with proving \eqref{eq:fix2}. To simplify the notation, we define a linear operator $H(g)$ by
\begin{align}
H (g)(t):=\sum_{l=1}^{3}\int_{0}^{t} e^{-\nu(t-s)} K_{l} g(s)\ ds.
\end{align}

We start with estimating $\|\langle v\rangle^{m}H(g)\|_{L^1}$. Compute directly to obtain, recall $\Upsilon_m=\Upsilon_{m,\frac{1}{2}}$ from \eqref{eq:mK1},
\begin{align}
\|\langle v\rangle^{m}H (g)\|_{L^1}\lesssim &\Upsilon_m\int_{0}^{t} \|\langle v\rangle^{m+1}g(s)\|_{L^1}\ ds\nonumber\\
=&\Upsilon_{m}\int_{0}^{t} \|\chi_{\leq m}\langle v\rangle^{m+1}g(s)\|_{L^1}\ ds+\Upsilon_m\int_{0}^{t} \|\chi_{> m}\langle v\rangle^{m+1}g(s)\|_{L^1}\ ds
\leq\frac{\Upsilon_m}{\Phi} \|v\|_{t}.\label{eq:contr1}
\end{align}

For $\Phi^2\int_{0}^{t}\|\chi_{\leq m}\langle v\rangle^{m+1}H(g)\|_{L^1}\ ds$, compute directly to obtain
\begin{align}
\Phi^2\int_{0}^{t}\|\chi_{\leq m}\langle v\rangle^{m+1}H(g)\|_{L^1}\ ds
 \leq & \Phi^2\int_{0}^{t}\int_{0}^{s} \|\chi_{\leq m}e^{-(s-s_1)\nu}\langle v\rangle^{m+1}K\ g(s_1)\|_{L^1}\ ds_1 ds\nonumber\\
\leq & \Phi^2\sum_{k=1}^{3} \|\chi_{\leq m}\int_{0}^{t}\int_{0}^{s} e^{-(s-s_1)\nu}\langle v\rangle^{m+1}K_{l}\ |g(s_1)|\ ds_1 ds\|_{L^1}.\label{eq:intNorm}
\end{align}

Integrate by parts in $s$, using $e^{-s\nu}=-\nu^{-1} \partial_{s} e^{-s\nu}$, to obtain
\begin{align}
\chi_{\leq m}\int_{0}^{t}\int_{0}^{s} e^{-(s-s_1)\nu}\langle v\rangle^{m+1}K_{l}\ |g(s_1)|\ ds_1 ds&= \chi_{\leq m}\int_{0}^{t}(1- e^{-(t-s)\nu}) \nu^{-1} \langle v\rangle^{m+1}K_{l}\ |g(s)|\ ds.\label{eq:InteBP}
\end{align} Here we exploit that $\chi_{\leq m}(1- e^{-(t-s)\nu})$ is small, provided that $t$, and hence $s\leq t$, are sufficiently small. To see this,  define $\epsilon(tm)$ as
\begin{align}
 \epsilon(t m) :=\|\chi_{\leq m} (1- e^{-t\nu})\|_{L^\infty}\leq \|\chi_{\leq m} (1- e^{-(t-s)\nu})\|_{L^\infty},\ s\leq t.
\end{align}
It is easy to see that 
\begin{align}
\epsilon(t m)\rightarrow 0\ \text{as}\ t m\rightarrow 0.
\end{align}

Plug this into \eqref{eq:intNorm}, and use $\nu^{-1} \langle v\rangle\lesssim 1,$ to obtain
\begin{align}
&\Phi^2\int_{0}^{t}\|\chi_{\leq m}\langle v\rangle^{m+1}H(g)\|_{L^1}\ ds\nonumber\\
\lesssim & \Phi^2\epsilon(tm) \Upsilon_m \big[\int_{0}^{t}\|\chi_{\leq m}\langle v\rangle^{m+1}g(s)\|_{L^1}\ ds
+\int_{0}^{t}\|\chi_{> m}\langle v\rangle^{m+1}g(s)\|_{L^1}\ ds\big]\nonumber\\
\lesssim & (\Phi+1)\epsilon(tm) \Upsilon_m\|g\|_{t}. \label{eq:contr2}
\end{align}

For
$\Phi\int_{0}^{t}\|\chi_{> m}\langle v\rangle^{m+1}H(g)\|_{L^1}\ ds$, integrate by parts as in \eqref{eq:InteBP} to find
\begin{align}
\int_{0}^{t}\|\chi_{> m}\langle v\rangle^{m+1}H(g)\|_{L^1}\ ds\lesssim &\sum_{l=1}^3\big[ \int_{0}^{t}\|\chi_{> m}\langle v\rangle^{m}K_l \chi_{>m}g(s)\|_{L^1}\ ds+\int_{0}^{t}\|\chi_{> m}\langle v\rangle^{m}K_l\chi_{\leq m} g(s)\|_{L^1} ds\big]\nonumber\\
\lesssim &\Phi\delta_{m,0}\int_{0}^{t}\|\chi_{> m}\langle v\rangle^{m+1}g(s)\|_{L^1}\ ds+\Phi \Upsilon_m\int_{0}^{t}\|\chi_{\leq m}\langle v\rangle^{m+1}g(s)\|_{L^1}\ ds\nonumber\\
\lesssim &\big[ \delta_{m,0}+\frac{\Upsilon_m}{\Phi}\big]\|g\|_{t}. \label{eq:contr3}
\end{align}
where $\delta_{m,0}$ is defined before Lemma \ref{LM:Control}. It shows that $\delta_{m,0}\rightarrow 0$ as $m\rightarrow \infty$.

The estimates in \eqref{eq:contr1}, \eqref{eq:contr2} and \eqref{eq:contr3} imply that, for some constant $c$,
\begin{align}
\|H(g)\|_{t}\leq 
&c\big[\frac{\Upsilon_m}{\Phi} +\delta_{m,0}+(\Phi+1)\epsilon(tm) \Upsilon_m\big]\|g\|_{t}.\label{eq:contractive}
\end{align}

Now we choose $m$, $\Phi$ and $t$ to make
\begin{align}
c\big[\frac{\Upsilon_m}{\Phi} +\delta_{m,0}+(\Phi+1)\epsilon(tm) \Upsilon_m\big]\leq \frac{3}{4}.\label{eq:thrfor}
\end{align}  This, together with \eqref{eq:contractive}, implies the desired \eqref{eq:fix2}.

It is easy to choose $m$, $\Phi$ and $t$ to make \eqref{eq:thrfor} hold. Specifically,
first choose $m$ to be sufficiently large, so that $$c \delta_{m,0}\leq \frac{1}{4},$$ secondly choose $\Phi$ so that $$c\frac{\Upsilon_m}{\Phi}\leq \frac{1}{4},$$ and lastly choose $t$ small enough so that
$$c(\Phi+1)\epsilon(tm) \Upsilon_m\leq \frac{1}{4}.$$

The proof of \eqref{eq:fix2} is complete.

The proof of \eqref{eq:fix1} is considerably easier, hence omitted.

\end{proof}

\def\cprime{$'$} \def\cprime{$'$} \def\cprime{$'$} \def\cprime{$'$}


\end{document}